\documentclass[12pt]{amsart} 
\usepackage{graphicx,color} 
 \usepackage{amsmath}
  \usepackage{amssymb,bbm}
  \usepackage{amscd}
\usepackage{enumerate}
\usepackage{multirow}
\usepackage{hyperref}
\usepackage{tikz}
\usetikzlibrary{arrows}
\usepackage{comment}

\renewcommand{\L}{\mathcal L}

\newcommand{\des}{\operatorname{des}}

\newcommand{\be}{\begin{equation}}
\newcommand{\ee}{\end{equation}}
\newcommand{\ds}{\displaystyle}

\newcommand{\im}{\operatorname{im}}
\newcommand{\fiber}{\operatorname{fiber}}
\newcommand{\LL}{\mathcal{L}}
\newcommand{\LM}{L^{\Monoid}}
\newcommand{\R}{\mathcal{R}}
\newcommand{\rfactor}{\operatorname{rfactor}}
\newcommand{\Rfactor}{\operatorname{Rfactor}}
\newcommand{\supp}{\operatorname{supp}}
\newcommand{\Monoid}{\mathcal{M}}
\newcommand{\prom}{\hat\partial}

\newtheorem{thm}{Theorem}[section]
\newtheorem{cor}[thm]{Corollary}
\newtheorem{lem}[thm]{Lemma}
\newtheorem{prop}[thm]{Proposition}
\newtheorem{defn}[thm]{Definition}
\newtheorem{rem}[thm]{Remark}

\newtheorem{eg}[thm]{Example}

\numberwithin{equation}{section}

\makeatletter
\def\Ddots{\mathinner{\mkern1mu\raise\p@
\vbox{\kern7\p@\hbox{.}}\mkern2mu
\raise4\p@\hbox{.}\mkern2mu\raise7\p@\hbox{.}\mkern1mu}}
\makeatother

\begin{document} 
\title{Combinatorial Markov chains on linear extensions}

\author[A. Ayyer]{Arvind Ayyer}
\address[Arvind Ayyer]{Department of Mathematics, UC Davis, One Shields Ave., Davis, CA 95616-8633, U.S.A. \newline
New address: Department of Mathematics, Department of Mathematics, Indian Institute of Science, Bangalore - 560012, India.
}

\email{arvind@math.iisc.ernet.in}

\author[S. Klee]{Steven Klee}
\address[Steven Klee]{Department of Mathematics, UC Davis, One Shields Ave., Davis, CA 95616-8633, U.S.A.\newline
New address: Department of Mathematics, Seattle University, 901 12th Avenue, Seattle, WA 98122-1090, U.S.A.
}
\email{klees@seattleu.edu}

\author[A. Schilling]{Anne Schilling}
\address[Anne Schilling]{Department of Mathematics, UC Davis, One Shields Ave., Davis, CA 95616-8633, U.S.A.}
\email{anne@math.ucdavis.edu}

\thanks{A.A. would like to acknowledge support from MSRI, where part of this work was done.
S.K. was supported by NSF VIGRE grant DMS--0636297. 
A.S. was supported by NSF grant DMS--1001256.}

\subjclass{Primary 06A07, 20M32, 20M30, 60J27; Secondary: 47D03}

\begin{abstract}
We consider generalizations of Sch\"utzenberger's promotion operator
on the set $\mathcal{L}$ of linear extensions of a finite poset of
size $n$. This gives rise to a strongly connected graph on
$\mathcal{L}$. By assigning weights to the edges of the graph in 
two different ways, we study two Markov chains, both of which are
irreducible. The stationary state of one gives rise to the uniform
distribution, whereas the weights of the stationary state of the other
has a nice product formula. This generalizes results by Hendricks on
the Tsetlin library, which corresponds to the case when the poset is
the anti-chain and hence $\mathcal{L}=S_n$ is the full symmetric
group. We also provide explicit eigenvalues of the
transition matrix in general when the poset is a rooted forest.
This is shown by proving that the associated monoid is $\R$-trivial
and then using Steinberg's extension of Brown's theory for Markov chains on 
left regular bands to $\R$-trivial monoids.
\end{abstract}

\date{\today}  

\maketitle

\section{Introduction}

Sch\"utzenberger~\cite{schuetzenberger.1972} introduced the notion of
evacuation and promotion on the set of linear extensions of a finite
poset $P$ of size $n$. This generalizes promotion on standard Young
tableaux defined in terms of jeu-de-taquin
moves. Haiman~\cite{haiman.1992} as well as Malvenuto and
Reutenauer~\cite{malvenuto_reutenauer.1994} simplified
Sch\"utzenberger's approach by expressing the promotion operator
$\partial$ in terms of more fundamental operators $\tau_i$ ($1\le
i<n$), which either act as the identity or as a simple
transposition. A beautiful survey on this subject was written by
Stanley~\cite{stanprom}.

In this paper, we consider a slight generalization of the promotion
operator defined as $\partial_i = \tau_i \tau_{i+1} \cdots \tau_{n-1}$
for $1\le i\le n$ with $\partial_1=\partial$ being the original
promotion operator. Since the operators $\partial_i$ act on the set of
all linear extensions of $P$, denoted $\mathcal{L}(P)$, this gives
rise to a graph whose vertices are the linear extensions and edges are
labeled by the action of $\partial_i$. We show that this graph is
strongly connected (see Proposition~\ref{proposition.strongly_connected}). 
As a result we obtain two irreducible Markov chains on
$\mathcal{L}(P)$ by assigning weights to the edges in two different
ways. In one case, the stationary state is uniform, that is, every
linear extension is equally likely to occur (see Theorem~\ref{theorem.uniform promotion}). 
In the other case, we obtain a nice product formula for the weights of the 
stationary distribution (see Theorem~\ref{theorem.promotion}).
We also consider  analogous Markov chains for the
adjacent transposition operators $\tau_i$, and give a combinatorial
formula for their stationary distributions (see Theorems~\ref{theorem.uniform transposition}
and~\ref{theorem.transposition}).

Our results can be viewed as a natural generalization of the results
of Hendricks~\cite{hendricks1,hendricks2} on the Tsetlin library
\cite{tsetlin}, which is a model for the way an arrangement of books
in a library shelf evolves over time. It is a Markov chain on
permutations, where the entry in the $i$th position is moved to the
front (or back depending on the conventions) with probability $p_i$. Hendricks' results from our viewpoint
correspond to the case when $P$ is an anti-chain and hence
$\mathcal{L}(P)=S_n$ is the full symmetric group.  Many variants of
the Tsetlin library have been studied and there is a wealth of
literature on the subject. We refer the interested reader to the
monographs by Letac~\cite{letac} and by Dies~\cite{dies}, as well as
the comprehensive bibliographies in~\cite{fill.1996} 
and~\cite{bidigare_hanlon_rockmore.1999}.

One of the most interesting properties of the Tsetlin library Markov
chain is that the eigenvalues of the transition matrix can be computed
exactly. The exact form of the eigenvalues was independently
investigated by several groups. Notably
Donnelly~\cite{donnelly.1991}, Kapoor and
Reingold~\cite{kapoor_reingold.1991}, and
Phatarfod~\cite{phatarfod.1991} studied the approach to stationarity
in great detail.
There has been some interest in finding exact formulas for the
eigenvalues for generalizations of the Tsetlin library. The first
major achievement in this direction was to interpret these results in
the context of hyperplane arrangements \cite{bidigare.1997,
  bidigare_hanlon_rockmore.1999, brown_diaconis.1998}.  This was
further generalized to a class of monoids called left regular
bands~\cite{brown.2000} and subsequently to all
bands~\cite{brown.2004} by Brown. This theory has been used
effectively by Bj\"orner~\cite{bjorner.2008, bjorner.2009} to extend
eigenvalue formulas on the Tsetlin library from a single shelf to
hierarchies of libraries.

In this paper we give explicit combinatorial formulas for the
eigenvalues and multiplicities for the transition matrix of the
promotion Markov chain when the underlying poset is a rooted forest
(see Theorem~\ref{theorem.eigenvalues}). This is
achieved by proving that the associated monoid is $\R$-trivial and
then using a generalization of Brown's theory~\cite{brown.2000} of
Markov chains for left regular bands to the $\R$-trivial case using
results by Steinberg~\cite{steinberg.2006, steinberg.2008}.

Computing the number of linear extensions is an important problem for
real world applications \cite{karzanov_khachiyan.1991}.  For example,
it relates to sorting algorithms in computer science, rankings in the
social sciences, and efficiently counting standard Young tableaux in
combinatorics. A recursive formula was given
in~\cite{edelman_hibi_stanley.1989}. Brightwell and
Winkler~\cite{brightwell_winkler.1991} showed that counting the number
of linear extensions is $\# P$-complete. Bubley and
Dyer~\cite{bubley.dyer.1999} provided an algorithm to (almost)
uniformly sample the set of linear extensions of a finite poset quickly.
We propose new Markov chains for sampling linear extensions uniformly randomly.
Further details are discussed in Section~\ref{section.outlook}.

The paper is outlined as follows.  In Section~\ref{section.promotion}
we define the extended promotion operator and investigate some of its
properties.  The extended promotion and transposition operators are
used in Section~\ref{section.markov chains} to define various Markov
chains, whose properties are studied in
Section~\ref{section.properties}. We also prove formulas for the
stationary distributions and explain the connection with the Tsetlin
library there.  In Section~\ref{section.chains} we derive the
partition function for the promotion Markov chains for rooted forests
as well as all eigenvalues together with their multiplicities of the
transition matrix.  The statements about eigenvalues and
multiplicities are proven in Section~\ref{section.R trivial} using the
theory of $\R$-trivial monoids.  We end with possible directions for
future research in Section~\ref{section.outlook}.  In
Appendix~\ref{section.appendix} we provide details about
implementations of linear extensions, Markov chains, and their
properties in {\tt Sage}~\cite{sage, sage-combinat} and {\tt Maple}.

\subsection*{Acknowledgements}
We would like to thank Richard Stanley for valuable input during his
visit to UC Davis in January 2012, Jes\'us De Loera, Persi Diaconis,
Franco Saliola, Benjamin Steinberg, and Peter Winkler for helpful
discussions.  Special thanks go to Nicolas M. Thi\'ery for his help
getting our code related to this project into {\tt
  Sage}~\cite{sage,sage-combinat}, for his discussions on the
representation theory of monoids, and for pointing out that
Theorem~\ref{theorem.eigenvalues} holds not only for unions of chains
but for rooted forests. John Stembridge's {\tt posets} package proved
very useful for computer experimentation.

\section{Extended promotion on linear extensions}
\label{section.promotion}

\subsection{Definition of extended promotion} \label{subsection.def prom}
Let  $P$ be an arbitrary poset of size $n$, with partial order denoted by $\preceq$. We assume that the 
elements of $P$ are labeled by integers in $[n]:=\{1,2,\ldots,n\}$. In addition, we assume
that the poset is naturally labeled, that is if $i,j \in P$ with $i \preceq j$ in $P$ then $i \le j$ as integers.
Let $\L:=\L(P)$ be the set of its {\bf linear extensions}, 
\be
\L(P) = \{ \pi \in S_{n} \mid i \prec j \text{ in $P$ } \implies \pi^{-1}_{i} < \pi^{-1}_{j} \text{ as integers} \},
\ee
which is naturally interpreted as a subset of the symmetric group $S_n$. Note that the identity permutation $e$ 
always belongs to $\L$. Let $P_{j}$ be the natural (induced) subposet of $P$ consisting of elements $k$ 
such that $j \preceq k$~\cite{stanenum}.

We now briefly recall the idea of {\bf promotion} of a linear extension of a poset $P$. Start with a linear extension 
$\pi \in \L(P)$ and imagine placing the label $\pi^{-1}_{i}$ in $P$ at the location $i$. By the definition of the linear 
extension, the labels will be well-ordered. The action of promotion of $\pi$ will give another linear extension of
$P$ as follows:
\begin{enumerate} 

\item The process starts with a seed, the label 1. First remove it and replace it by the minimum of all the labels covering it, $i$, say. 
 
\item Now look for the minimum of all labels covering $i$ in the original poset, and replace it, and continue in this way. 

\item This process ends when a label is a ``local maximum.'' Place the label $n+1$ at that point.

\item Decrease all the labels by 1.

\end{enumerate} 

This new linear extension is denoted $\pi \partial$  \cite{stanprom}.

\begin{eg}
\label{example.promotion slide}
Figure~\ref{figure:promotion-example} shows a poset (left) to which we assign the identity linear extension
$\pi = 123456789$. The linear extension $\pi'=\pi\partial = 214537869$ obtained by applying the promotion operator 
is depicted on the right. Note that indeed we place $\pi_i^{'-1}$ in position $i$, namely 3 is in position 5 in $\pi'$, so that 
5 in $\pi \partial$ is where 3 was originally.

\begin{center}
\begin{figure}[h]
\begin{tabular}{p{2in}p{2in}}
\begin{tikzpicture}[>=latex,line join=bevel,]
\node (1) at (66bp,7bp) [draw, draw = none] {$1$};
  \node (3) at (36bp,57bp) [draw, draw = none] {$3$};
  \node (2) at (36bp,7bp) [draw, draw = none] {$2$};
  \node (5) at (66bp,107bp) [draw, draw = none] {$5$};
  \node (4) at (66bp,57bp) [draw, draw = none] {$4$};
  \node (7) at (6bp,107bp) [draw, draw = none] {$7$};
  \node (6) at (36bp,107bp) [draw, draw = none] {$6$};
  \node (9) at (21bp,157bp) [draw, draw = none] {$9$};
  \node (8) at (96bp,107bp) [draw, draw = none] {$8$};
  \draw [black,-] (3) ..controls (50.771bp,32.382bp) and (57.698bp,20.837bp)  .. (1);
  \draw [black,-] (9) ..controls (28.603bp,131.66bp) and (31.916bp,120.61bp)  .. (6);
  \draw [black,-] (5) ..controls (66bp,81.179bp) and (66bp,70.462bp)  .. (4);
  \draw [black,-] (7) ..controls (20.771bp,82.382bp) and (27.698bp,70.837bp)  .. (3);
  \draw [black,-] (3) ..controls (36bp,31.179bp) and (36bp,20.462bp)  .. (2);
  \draw [black,-] (4) ..controls (66bp,31.179bp) and (66bp,20.462bp)  .. (1);
  \draw [black,-] (9) ..controls (13.397bp,131.66bp) and (10.084bp,120.61bp)  .. (7);
  \draw [black,-] (8) ..controls (81.229bp,82.382bp) and (74.302bp,70.837bp)  .. (4);
  \draw [black,-] (6) ..controls (36bp,81.179bp) and (36bp,70.462bp)  .. (3);
\end{tikzpicture}%
&
\begin{tikzpicture}[>=latex,line join=bevel,]
\node (1) at (66bp,7bp) [draw, draw = none] {$2$};
  \node (3) at (36bp,57bp) [draw, draw = none] {$5$};
  \node (2) at (36bp,7bp) [draw, draw = none] {$1$};
  \node (5) at (66bp,107bp) [draw, draw = none] {$4$};
  \node (4) at (66bp,57bp) [draw, draw = none] {$3$};
  \node (7) at (6bp,107bp) [draw, draw = none] {$6$};
  \node (6) at (36bp,107bp) [draw, draw = none] {$8$};
  \node (9) at (21bp,157bp) [draw, draw = none] {$9$};
  \node (8) at (96bp,107bp) [draw, draw = none] {$7$};
  \draw [black,-] (4) ..controls (66bp,31.179bp) and (66bp,20.462bp)  .. (1);
  \draw [black,-] (9) ..controls (28.603bp,131.66bp) and (31.916bp,120.61bp)  .. (6);
  \draw [black,-] (5) ..controls (66bp,81.179bp) and (66bp,70.462bp)  .. (4);
  \draw [black,-] (3) ..controls (50.771bp,32.382bp) and (57.698bp,20.837bp)  .. (1);
  \draw [black,-] (7) ..controls (20.771bp,82.382bp) and (27.698bp,70.837bp)  .. (3);
  \draw [black,-] (3) ..controls (36bp,31.179bp) and (36bp,20.462bp)  .. (2);
  \draw [black,-] (9) ..controls (13.397bp,131.66bp) and (10.084bp,120.61bp)  .. (7);
  \draw [black,-] (8) ..controls (81.229bp,82.382bp) and (74.302bp,70.837bp)  .. (4);
  \draw [black,-] (6) ..controls (36bp,81.179bp) and (36bp,70.462bp)  .. (3);
\end{tikzpicture}
\end{tabular}
\caption{A linear extension $\pi$ (left) and $\pi\partial$ (right).}
\label{figure:promotion-example}
\end{figure}
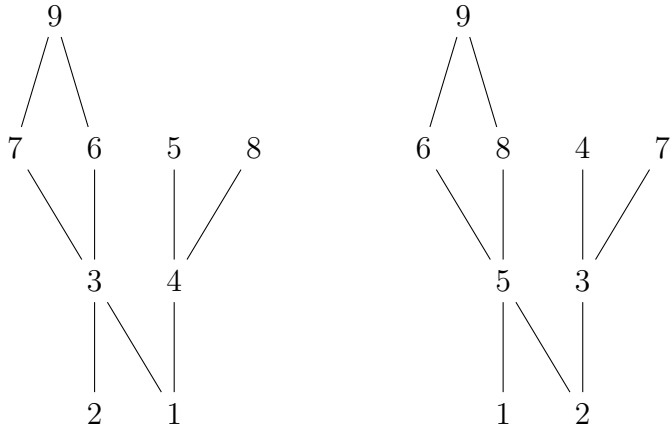
\end{center}

\noindent
Figure \ref{figure:promotion-example2} illustrates the steps used to construct the linear extension $\pi\partial$ from the linear extension $\pi$ from Figure \ref{figure:promotion-example}. Appendix~\ref{section.appendix} includes Sage implementation of this action.

\begin{center}
\begin{figure}[h]
\begin{tabular}{|p{1.75in}|p{1.75in}|p{1.75in}|} \hline
\textbf{Step 1:} Remove the minimal element $1$. &
\textbf{Step 2:}    The minimal element that covered $1$ was $3$, so replace $1$ with $3$. &
\textbf{Step 2 (continued):}  The minimal element that covered $3$ was $6$, so replace $3$ with $6$. \\ \hline
\begin{center}
\begin{tikzpicture}[>=latex,line join=bevel,]
\node (1) at (66bp,7bp) [draw, draw = none] {};
  \node (3) at (36bp,57bp) [draw, draw = none] {$3$};
  \node (2) at (36bp,7bp) [draw, draw = none] {$2$};
  \node (5) at (66bp,107bp) [draw, draw = none] {$5$};
  \node (4) at (66bp,57bp) [draw, draw = none] {$4$};
  \node (7) at (6bp,107bp) [draw, draw = none] {$7$};
  \node (6) at (36bp,107bp) [draw, draw = none] {$6$};
  \node (9) at (21bp,157bp) [draw, draw = none] {$9$};
  \node (8) at (96bp,107bp) [draw, draw = none] {$8$};
  \draw [black,-] (4) ..controls (66bp,31.179bp) and (66bp,20.462bp)  .. (1);
  \draw [black,-] (9) ..controls (28.603bp,131.66bp) and (31.916bp,120.61bp)  .. (6);
  \draw [black,-] (5) ..controls (66bp,81.179bp) and (66bp,70.462bp)  .. (4);
  \draw [black,-] (3) ..controls (50.771bp,32.382bp) and (57.698bp,20.837bp)  .. (1);
  \draw [black,-] (7) ..controls (20.771bp,82.382bp) and (27.698bp,70.837bp)  .. (3);
  \draw [black,-] (3) ..controls (36bp,31.179bp) and (36bp,20.462bp)  .. (2);
  \draw [black,-] (9) ..controls (13.397bp,131.66bp) and (10.084bp,120.61bp)  .. (7);
  \draw [black,-] (8) ..controls (81.229bp,82.382bp) and (74.302bp,70.837bp)  .. (4);
  \draw [black,-] (6) ..controls (36bp,81.179bp) and (36bp,70.462bp)  .. (3);
\end{tikzpicture}%
\end{center}

&
\begin{center}
\begin{tikzpicture}[>=latex,line join=bevel,]
\node (1) at (66bp,7bp) [draw, draw = none] {$3$};
  \node (3) at (36bp,57bp) [draw, draw = none] {};
  \node (2) at (36bp,7bp) [draw, draw = none] {$2$};
  \node (5) at (66bp,107bp) [draw, draw = none] {$5$};
  \node (4) at (66bp,57bp) [draw, draw = none] {$4$};
  \node (7) at (6bp,107bp) [draw, draw = none] {$7$};
  \node (6) at (36bp,107bp) [draw, draw = none] {$6$};
  \node (9) at (21bp,157bp) [draw, draw = none] {$9$};
  \node (8) at (96bp,107bp) [draw, draw = none] {$8$};
  \draw [black,-] (4) ..controls (66bp,31.179bp) and (66bp,20.462bp)  .. (1);
  \draw [black,-] (9) ..controls (28.603bp,131.66bp) and (31.916bp,120.61bp)  .. (6);
  \draw [black,-] (5) ..controls (66bp,81.179bp) and (66bp,70.462bp)  .. (4);
  \draw [black,-] (3) ..controls (50.771bp,32.382bp) and (57.698bp,20.837bp)  .. (1);
  \draw [black,-] (7) ..controls (20.771bp,82.382bp) and (27.698bp,70.837bp)  .. (3);
  \draw [black,-] (3) ..controls (36bp,31.179bp) and (36bp,20.462bp)  .. (2);
  \draw [black,-] (9) ..controls (13.397bp,131.66bp) and (10.084bp,120.61bp)  .. (7);
  \draw [black,-] (8) ..controls (81.229bp,82.382bp) and (74.302bp,70.837bp)  .. (4);
  \draw [black,-] (6) ..controls (36bp,81.179bp) and (36bp,70.462bp)  .. (3);
\end{tikzpicture}%
\end{center}
&
\begin{center}
\begin{tikzpicture}[>=latex,line join=bevel,]
\node (1) at (66bp,7bp) [draw, draw = none] {$3$};
  \node (3) at (36bp,57bp) [draw, draw = none] {$6$};
  \node (2) at (36bp,7bp) [draw, draw = none] {$2$};
  \node (5) at (66bp,107bp) [draw, draw = none] {$5$};
  \node (4) at (66bp,57bp) [draw, draw = none] {$4$};
  \node (7) at (6bp,107bp) [draw, draw = none] {$7$};
  \node (6) at (36bp,107bp) [draw, draw = none] {};
  \node (9) at (21bp,157bp) [draw, draw = none] {$9$};
  \node (8) at (96bp,107bp) [draw, draw = none] {$8$};
  \draw [black,-] (4) ..controls (66bp,31.179bp) and (66bp,20.462bp)  .. (1);
  \draw [black,-] (9) ..controls (28.603bp,131.66bp) and (31.916bp,120.61bp)  .. (6);
  \draw [black,-] (5) ..controls (66bp,81.179bp) and (66bp,70.462bp)  .. (4);
  \draw [black,-] (3) ..controls (50.771bp,32.382bp) and (57.698bp,20.837bp)  .. (1);
  \draw [black,-] (7) ..controls (20.771bp,82.382bp) and (27.698bp,70.837bp)  .. (3);
  \draw [black,-] (3) ..controls (36bp,31.179bp) and (36bp,20.462bp)  .. (2);
  \draw [black,-] (9) ..controls (13.397bp,131.66bp) and (10.084bp,120.61bp)  .. (7);
  \draw [black,-] (8) ..controls (81.229bp,82.382bp) and (74.302bp,70.837bp)  .. (4);
  \draw [black,-] (6) ..controls (36bp,81.179bp) and (36bp,70.462bp)  .. (3);
\end{tikzpicture}%
\end{center}
\\ \hline
\textbf{Step 2 (continued):} The minimal element that covered $6$ was $9$, so replace $6$ with $9$. &
\textbf{Step 3:} Since $9$ was a local maximum, replace $9$ with $10$. & 
\textbf{Step 4:} Decrease all labels by $1$.  The resulting linear extension is $\partial \pi$.
\\ \hline
\begin{center}
\begin{tikzpicture}[>=latex,line join=bevel,]
\node (1) at (66bp,7bp) [draw, draw = none] {$3$};
  \node (3) at (36bp,57bp) [draw, draw = none] {$6$};
  \node (2) at (36bp,7bp) [draw, draw = none] {$2$};
  \node (5) at (66bp,107bp) [draw, draw = none] {$5$};
  \node (4) at (66bp,57bp) [draw, draw = none] {$4$};
  \node (7) at (6bp,107bp) [draw, draw = none] {$7$};
  \node (6) at (36bp,107bp) [draw, draw = none] {$9$};
  \node (9) at (21bp,157bp) [draw, draw = none] {};
  \node (8) at (96bp,107bp) [draw, draw = none] {$8$};
  \draw [black,-] (4) ..controls (66bp,31.179bp) and (66bp,20.462bp)  .. (1);
  \draw [black,-] (9) ..controls (28.603bp,131.66bp) and (31.916bp,120.61bp)  .. (6);
  \draw [black,-] (5) ..controls (66bp,81.179bp) and (66bp,70.462bp)  .. (4);
  \draw [black,-] (3) ..controls (50.771bp,32.382bp) and (57.698bp,20.837bp)  .. (1);
  \draw [black,-] (7) ..controls (20.771bp,82.382bp) and (27.698bp,70.837bp)  .. (3);
  \draw [black,-] (3) ..controls (36bp,31.179bp) and (36bp,20.462bp)  .. (2);
  \draw [black,-] (9) ..controls (13.397bp,131.66bp) and (10.084bp,120.61bp)  .. (7);
  \draw [black,-] (8) ..controls (81.229bp,82.382bp) and (74.302bp,70.837bp)  .. (4);
  \draw [black,-] (6) ..controls (36bp,81.179bp) and (36bp,70.462bp)  .. (3);
\end{tikzpicture}%
\end{center}
&
\begin{center}
\begin{tikzpicture}[>=latex,line join=bevel,]
\node (1) at (66bp,7bp) [draw, draw = none] {$3$};
  \node (3) at (36bp,57bp) [draw, draw = none] {$6$};
  \node (2) at (36bp,7bp) [draw, draw = none] {$2$};
  \node (5) at (66bp,107bp) [draw, draw = none] {$5$};
  \node (4) at (66bp,57bp) [draw, draw = none] {$4$};
  \node (7) at (6bp,107bp) [draw, draw = none] {$7$};
  \node (6) at (36bp,107bp) [draw, draw = none] {$9$};
  \node (9) at (21bp,157bp) [draw, draw = none] {$10$};
  \node (8) at (96bp,107bp) [draw, draw = none] {$8$};
  \draw [black,-] (4) ..controls (66bp,31.179bp) and (66bp,20.462bp)  .. (1);
  \draw [black,-] (9) ..controls (28.603bp,131.66bp) and (31.916bp,120.61bp)  .. (6);
  \draw [black,-] (5) ..controls (66bp,81.179bp) and (66bp,70.462bp)  .. (4);
  \draw [black,-] (3) ..controls (50.771bp,32.382bp) and (57.698bp,20.837bp)  .. (1);
  \draw [black,-] (7) ..controls (20.771bp,82.382bp) and (27.698bp,70.837bp)  .. (3);
  \draw [black,-] (3) ..controls (36bp,31.179bp) and (36bp,20.462bp)  .. (2);
  \draw [black,-] (9) ..controls (13.397bp,131.66bp) and (10.084bp,120.61bp)  .. (7);
  \draw [black,-] (8) ..controls (81.229bp,82.382bp) and (74.302bp,70.837bp)  .. (4);
  \draw [black,-] (6) ..controls (36bp,81.179bp) and (36bp,70.462bp)  .. (3);
\end{tikzpicture}%
\end{center}
&
\begin{center}
\begin{tikzpicture}[>=latex,line join=bevel,]
\node (1) at (66bp,7bp) [draw, draw = none] {$2$};
  \node (3) at (36bp,57bp) [draw, draw = none] {$5$};
  \node (2) at (36bp,7bp) [draw, draw = none] {$1$};
  \node (5) at (66bp,107bp) [draw, draw = none] {$4$};
  \node (4) at (66bp,57bp) [draw, draw = none] {$3$};
  \node (7) at (6bp,107bp) [draw, draw = none] {$6$};
  \node (6) at (36bp,107bp) [draw, draw = none] {$8$};
  \node (9) at (21bp,157bp) [draw, draw = none] {$9$};
  \node (8) at (96bp,107bp) [draw, draw = none] {$7$};
  \draw [black,-] (4) ..controls (66bp,31.179bp) and (66bp,20.462bp)  .. (1);
  \draw [black,-] (9) ..controls (28.603bp,131.66bp) and (31.916bp,120.61bp)  .. (6);
  \draw [black,-] (5) ..controls (66bp,81.179bp) and (66bp,70.462bp)  .. (4);
  \draw [black,-] (3) ..controls (50.771bp,32.382bp) and (57.698bp,20.837bp)  .. (1);
  \draw [black,-] (7) ..controls (20.771bp,82.382bp) and (27.698bp,70.837bp)  .. (3);
  \draw [black,-] (3) ..controls (36bp,31.179bp) and (36bp,20.462bp)  .. (2);
  \draw [black,-] (9) ..controls (13.397bp,131.66bp) and (10.084bp,120.61bp)  .. (7);
  \draw [black,-] (8) ..controls (81.229bp,82.382bp) and (74.302bp,70.837bp)  .. (4);
  \draw [black,-] (6) ..controls (36bp,81.179bp) and (36bp,70.462bp)  .. (3);
\end{tikzpicture}
\end{center}

\\ \hline
\end{tabular}
\caption{Constructing $\pi\partial$ from $\pi$.}
\label{figure:promotion-example2}
\end{figure}
\end{center}
\end{eg}

We now generalize this to {\bf extended promotion}, whose seed is any of the numbers $1,2,\ldots,n$. The algorithm 
is similar to the original one, and we describe it for seed $j$. Start with the subposet $P_{j}$ and perform 
steps 1--3 in a completely analogous fashion. Now decrease all the labels strictly larger than $j$ by 1 in $P$ 
(not only $P_{j}$). Clearly this gives a new linear extension, which we denote $\pi \partial_{j}$.
Note that $\partial_n$ is always the identity.

The extended promotion operator can be expressed in terms of more elementary operators $\tau_i$ ($1\le i<n$) as shown 
in~\cite{haiman.1992, malvenuto_reutenauer.1994,stanprom} and has explicitly been used to count linear 
extensions in~\cite{edelman_hibi_stanley.1989}. 
Let $\pi=\pi_1 \cdots \pi_n \in \L(P)$ be a linear extension of a finite poset $P$ in one-line notation. Then
\begin{equation} \label{deftau}
	\pi \tau_i = \begin{cases}
	\pi_1 \cdots \pi_{i-1} \pi_{i+1} \pi_i \cdots \pi_n & \text{if $\pi_i$ and $\pi_{i+1}$ are not}\\
	& \text{comparable in $P$,}\\
	\pi_1 \cdots \pi_n & \text{otherwise.} \end{cases}
\end{equation}
Alternatively, $\tau_i$ acts non-trivially on a linear extension if interchanging entries $\pi_i$ and $\pi_{i+1}$ yields another 
linear extension. Then as an operator on $\L(P)$,
\begin{equation}
\label{equation.promotion tau}
 	\partial_j = \tau_j \tau_{j+1} \cdots \tau_{n-1}.
\end{equation}

\subsection{Properties of $\tau_i$ and extended promotion}
The operators $\tau_i$ are involutions ($\tau_i^2 = 1$) and partially commute ($\tau_i \tau_j = \tau_j \tau_i$ when $|i-j|>1$). 
Unlike the generators for the symmetric group, the $\tau_i$ do not always satisfy the braid relation 
$\tau_i \tau_{i+1} \tau_i = \tau_{i+1} \tau_i \tau_{i+1}$. They do, however, satisfy $(\tau_i \tau_{i+1})^6 = 1$~\cite{stanprom}.

\begin{prop}\label{tau.braid.relations}
Let $P$ be a poset on $[n]$.  The braid relations 
\[
	\pi\tau_j\tau_{j+1}\tau_j = \pi\tau_{j+1}\tau_j\tau_{j+1}
\]
hold for all $1 \leq j < n-1$ and all $\pi \in \L(P)$ if and only if $P$ is a union of disjoint chains.
\end{prop}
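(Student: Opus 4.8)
The plan is to reduce the braid relation to a purely local statement about three consecutive entries, and then to run a finite analysis over the possible comparability patterns on a triple of labels.

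First I would observe that $\tau_j$ and $\tau_{j+1}$ only ever move the entries in positions $j,j+1,j+2$, and that whether they swap depends solely on the comparabilities among the labels currently occupying those positions (by~\eqref{deftau}). Hence, for a fixed $\pi$ and $j$, the identity $\pi\tau_j\tau_{j+1}\tau_j = \pi\tau_{j+1}\tau_j\tau_{j+1}$ depends only on the induced subposet on the triple $\{\pi_j,\pi_{j+1},\pi_{j+2}\}$ together with the order in which these three labels appear in $\pi$. This turns the statement into a finite check over the comparability patterns of three elements. The second preliminary observation is the poset-theoretic reformulation: $P$ is a union of disjoint chains if and only if the relation ``comparable'' is transitive, equivalently, no three elements $a,b,c$ form a ``$V$'' (with $b\prec a$, $b\prec c$, and $a,c$ incomparable) or, dually, a ``$\Lambda$''.

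For the \emph{if} direction I would assume $P$ is a union of chains. Then the comparability graph on any consecutive triple of labels is a disjoint union of cliques, so the number of comparable pairs among them is $0$, $1$, or $3$. I would then verify the braid relation by directly evaluating both words $\tau_j\tau_{j+1}\tau_j$ and $\tau_{j+1}\tau_j\tau_{j+1}$ on the triple in each case, the single-comparable-pair case splitting into the three choices of which of the pairs is the comparable one. Each evaluation is short, using that two comparable labels are frozen in their extension order (so the corresponding $\tau$ acts as the identity), while a mutually incomparable pair always transposes; for the all-incomparable triple both words reduce to the genuine $S_3$ braid relation $s_1s_2s_1=s_2s_1s_2$.

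For the \emph{only if} direction I would argue the contrapositive: if $P$ is not a union of chains it contains a $V$ (or dually a $\Lambda$), and on the local order $(b,a,c)$ a direct computation gives $\pi\tau_j\tau_{j+1}\tau_j$ equal to $(b,c,a)$ and $\pi\tau_{j+1}\tau_j\tau_{j+1}$ equal to $(b,a,c)$ in the window, so the relation fails. The main obstacle is realizing such a bad triple as three \emph{consecutive} entries of an actual linear extension: an element lying strictly between $b$ and one of the arms would be forced into the window. I would remove this obstruction by first replacing the chosen $V$ by one whose bottom $b$ covers both arms, via a monovariant argument that strictly decreases $|[b,a]\cup[b,c]|$ at each replacement step (each step either raises $b$ or lowers an arm). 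For such a cover configuration no external element lies strictly between two elements of the triple, so the triple is convex and can be scheduled consecutively in the order $(b,a,c)$ by an ordinary topological sort. The dual $\Lambda$ case then follows by passing to the opposite poset $P^{\ast}$, under which reversing one-line notation carries the braid relation at index $j$ to the one at index $n-1-j$.
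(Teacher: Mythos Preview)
Your proposal is correct and is exactly the kind of case analysis the paper has in mind; the paper in fact omits the proof entirely, remarking only that it ``is an easy case-by-case check.'' The one point requiring genuine care---realizing a bad $V$ or $\Lambda$ as three \emph{consecutive} entries of some linear extension---is handled correctly by your monovariant reduction to a covering configuration and the resulting convexity of the triple.
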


The proof is an easy case-by-case check. Since we do not use this result, we omit the proof.

It will also be useful to express the operators $\tau_i$ in terms of the generalized promotion operator.

\begin{lem}
\label{lemma.tau_partial}
For all $1 \leq j \leq n-1$, each operator $\tau_j$ can be expressed as a product of promotion operators.
\end{lem}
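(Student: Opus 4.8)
The plan is to invert the factorization \eqref{equation.promotion tau} one step at a time. Reading that relation for consecutive indices gives the recursion $\partial_j = \tau_j\,\partial_{j+1}$ for every $1\le j\le n-1$, where $\partial_n$ is the identity operator. Solving for the leading factor produces the single key identity
\[
  \tau_j = \partial_j\,\partial_{j+1}^{-1},
\]
so the entire lemma reduces to the question of whether the inverse $\partial_{j+1}^{-1}$ can itself be realized as a product of promotion operators.

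To eliminate that inverse I would use finiteness. Each $\tau_i$ is an involution, hence a bijection of the finite set $\L(P)$; consequently $\partial_{j+1}=\tau_{j+1}\cdots\tau_{n-1}$ is a composite of bijections and so lies in the finite group $\mathrm{Sym}(\L(P))$. Writing $m$ for its order, we have $\partial_{j+1}^{m}=\mathrm{id}$ and therefore $\partial_{j+1}^{-1}=\partial_{j+1}^{\,m-1}$, which gives
\[
  \tau_j = \partial_j\,\partial_{j+1}^{\,m-1},
\]
manifestly a product of promotion operators. A more hands-on alternative, which I would fall back on if I wanted to avoid invoking the symmetric group, is to use $\tau_i^2=1$ to verify by telescoping that $\partial_j\,\tau_{n-1}\tau_{n-2}\cdots\tau_{j+1}=\tau_j$, and then close the argument by downward induction on $j$ (base case $\tau_{n-1}=\partial_{n-1}$), since each $\tau_i$ with $i>j$ occurring on the left is already a product of promotion operators by the inductive hypothesis.

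The main point to be careful about is precisely the treatment of the inverse: the promotion operators $\partial_i$ are not themselves involutions — indeed $\partial_1$ is ordinary promotion, whose order can be large — so one cannot simply cancel symbols, and the step must be justified either through the finiteness of $\L(P)$ or through the involutivity of the $\tau_i$. Once that is handled, everything else is a direct rewriting of \eqref{equation.promotion tau}, so I expect no further difficulty.
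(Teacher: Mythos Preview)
Your proposal is correct. Your ``hands-on alternative'' is in fact exactly the argument the paper gives: it proves $\tau_j = \partial_j\,\tau_{n-1}\cdots\tau_{j+1}$ by inserting $\tau_{n-1}^2\cdots\tau_{j+1}^2=\mathrm{id}$ and then appeals to downward induction on $j$ with base case $\tau_{n-1}=\partial_{n-1}$. Your primary route, writing $\tau_j=\partial_j\,\partial_{j+1}^{\,m-1}$ via the finite order $m$ of $\partial_{j+1}$ on $\L(P)$, is a slightly different and arguably cleaner packaging: it avoids the induction entirely and yields the result for each $j$ in one stroke, at the cost of a non-constructive exponent $m$ that depends on the poset. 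The paper's inductive version, by contrast, produces an explicit word in the $\partial_i$ whose length does not depend on the order of any promotion operator.
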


\begin{proof}
We prove the claim by induction on $j$, starting with the case that $j=n-1$ and decreasing until we reach the case that $j=1$.  When $j=n-1$, the claim is obvious since $\tau_{n-1} = \partial_{n-1}$.  For $j < n-1$, we observe that 
\begin{eqnarray*}
\tau_j &=& \tau_{j}\tau_{j+1}\cdots \tau_{n-1} \tau_{n-1}\cdots \tau_{j+2}\tau_{j+1} \\
&=& \partial_j \tau_{n-1} \cdots \tau_{j+2} \tau_{j+1}.
\end{eqnarray*}
By our inductive hypothesis, each of $\tau_{j+1},\ldots,\tau_{n-1}$ can be expressed as a product of promotion operators, and hence so too can $\tau_j$.
\end{proof}

\section{Various Markov chains} 
\label{section.markov chains}

We now consider various discrete-time Markov chains related to the extended promotion operator. 
For completeness, we briefly review the part of the theory relevant to us.

Fix a finite poset $P$ of size $n$. The operators $\{\tau_i \mid 1\le
i<n\}$ (resp. $\{\partial_i \mid 1\le i\le n\}$), define a directed
graph on the set of linear extensions $\L(P)$. The vertices of the
graph are the elements in $\L(P)$ and there is an edge from $\pi$ to
$\pi'$ if $\pi' = \pi \tau_i$ (resp. $\pi' = \pi\partial_i$).  We can
now consider random walks on this graph with probabilities given
formally by $x_{1},\dots,x_n$ which sum to 1. In each case we give two
ways to assign the edge weights, see Sections~\ref{subsection.tau
  uniform}--\ref{subsection.promotion}.  An edge with weight $x_{i}$
is traversed with that rate.  A priori, the $x_{i}$'s must be positive
real numbers for this to make sense according to the standard
techniques of Markov chains.  However, the ideas work in much greater
generality and one can think of this as an ``analytic continuation.''

A discrete-time Markov chain is defined by the {\bf transition matrix}
$M$, whose entries are indexed by elements of the state space. In our
case, they are labeled by elements of $\L(P)$. We take the convention
that the $(\pi',\pi)$ entry gives the probability of going from $\pi
\to \pi'$. The special case of the diagonal entry at $(\pi,\pi)$ gives
the probability of a loop at the $\pi$.  This ensures that column sums
of $M$ are one and consequently, one is an eigenvalue with row (left-)
eigenvector being the all-ones vector.  A Markov chain is said to be
{\bf irreducible} if the associated digraph is strongly connected. In
addition, it is said to be {\bf aperiodic} if the greatest common
divisor of the lengths of all possible loops from any state to itself
is one.  For irreducible aperiodic chains, the Perron-Frobenius
theorem guarantees that there is a unique {\bf stationary
  distribution}. This is given by the entries of the column (right-)
eigenvector of $M$ with eigenvalue 1. Equivalently, the stationary distribution
$w(\pi)$ is the solution of the {\bf master equation}, given by
\be \label{master.equation} 
\sum_{\pi' \in \L(P)} M_{\pi,\pi'} \;w(\pi') 
= \sum_{\pi' \in \L(P)} M_{\pi',\pi} \; w(\pi).  
\ee 
Edges which are loops contribute to both sides equally and thus cancel
out.  For more on the theory of finite state Markov chains, see
\cite{levin_peres_wilmer.2009}.

We set up a running example that will be used for each
case. Appendix~\ref{section.appendix} shows how to define and work
with this poset in Sage.

\begin{eg} \label{example.running example}
Define $P$ by its covering relations $\{ (1,3), (1,4), (2,3) \}$, so that its Hasse diagram is as shown below:
\setlength{\unitlength}{1mm}
\begin{center}
\begin{picture}(20, 20)
\put(10,4){\circle*{1}}
\put(20,4){\circle*{1}}
\put(9,0){1}
\put(19,0){2}
\put(10,14){\circle*{1}}
\put(20,14){\circle*{1}}
\put(9,16){4}
\put(19,16){3}
\put(10,4){\line(0,1){10}}
\put(20,4){\line(0,1){10}}
\put(10,4){\line(1,1){10}}
\end{picture}
\end{center}

Then the elements of
$
\L(P) = \{ 1234, 1243, 1423, 2134, 2143 \}
$
are represented by the following diagrams respectively:
\begin{center}
\begin{picture}(20, 20)
\put(10,4){\circle*{1}}
\put(20,4){\circle*{1}}
\put(9,0){1}
\put(19,0){2}
\put(10,14){\circle*{1}}
\put(20,14){\circle*{1}}
\put(9,16){4}
\put(19,16){3}
\put(10,4){\line(0,1){10}}
\put(20,4){\line(0,1){10}}
\put(10,4){\line(1,1){10}}
\end{picture}
\begin{picture}(20, 20)
\put(10,4){\circle*{1}}
\put(20,4){\circle*{1}}
\put(9,0){1}
\put(19,0){2}
\put(10,14){\circle*{1}}
\put(20,14){\circle*{1}}
\put(9,16){3}
\put(19,16){4}
\put(10,4){\line(0,1){10}}
\put(20,4){\line(0,1){10}}
\put(10,4){\line(1,1){10}}
\end{picture}
\begin{picture}(20, 20)
\put(10,4){\circle*{1}}
\put(20,4){\circle*{1}}
\put(9,0){1}
\put(19,0){3}
\put(10,14){\circle*{1}}
\put(20,14){\circle*{1}}
\put(9,16){2}
\put(19,16){4}
\put(10,4){\line(0,1){10}}
\put(20,4){\line(0,1){10}}
\put(10,4){\line(1,1){10}}
\end{picture}
\begin{picture}(20, 20)
\put(10,4){\circle*{1}}
\put(20,4){\circle*{1}}
\put(9,0){2}
\put(19,0){1}
\put(10,14){\circle*{1}}
\put(20,14){\circle*{1}}
\put(9,16){4}
\put(19,16){3}
\put(10,4){\line(0,1){10}}
\put(20,4){\line(0,1){10}}
\put(10,4){\line(1,1){10}}
\end{picture}
\begin{picture}(20, 20)
\put(10,4){\circle*{1}}
\put(20,4){\circle*{1}}
\put(9,0){2}
\put(19,0){1}
\put(10,14){\circle*{1}}
\put(20,14){\circle*{1}}
\put(9,16){3}
\put(19,16){4}
\put(10,4){\line(0,1){10}}
\put(20,4){\line(0,1){10}}
\put(10,4){\line(1,1){10}}
\end{picture}
\end{center}
\end{eg}

\subsection{Uniform transposition graph}
\label{subsection.tau uniform}

The vertices of the {\bf uniform transposition graph} are the elements in $\L(P)$ and there is an 
edge between $\pi$ and $\pi'$ if and only if $\pi' = \pi \tau_j$ for some $j \in [n]$, where we define
$\tau_n$ to be the identity map. This edge is assigned
the symbolic weight $x_{j}$. The name ``uniform" is motivated by the fact that the stationary distribution of
this Markov chain turns out to be uniform. Note that this chain is more general than the chains considered
in~\cite{karzanov_khachiyan.1991} in that we assign arbitrary weights $x_j$ on the edges.

\begin{eg} \label{example.tau uniform}
Consider the poset and linear extensions of Example~\ref{example.running example}.
The uniform transposition graph is illustrated in Figure~\ref{figure.tau uniform}. 
\begin{figure}[h]
\begin{center}
\begin{tikzpicture} [>=triangle 45]
\draw (0,4) node {\verb!1234!};
\draw (0,0) node {\verb!1243!};
\draw (6,0) node {\verb!2134!};
\draw (6,4) node {\verb!2143!};
\draw (3,2) node {\verb!1423!};
\draw [<->] (6,.3) -- (6,3.7);
\draw (6.3,2) node {$x_3$};
\draw [<->] (0,.3) -- (0,3.7);
\draw (-.3,2) node {$x_3$};
\draw [<->] (.3,.3) -- (2.5,1.75);
\draw (1.6,.8) node {$x_2$};
\draw [<->] (.5,0) to [out = 0, in = -90] (5.7,3.7);
\draw (3,.1) node {$x_1$};
\draw [<->] (.5,4) to [out = 0, in = 90] (5.7,.3);
\draw (3,3.9) node {$x_1$};
\end{tikzpicture}
\caption{Uniform transposition graph for Example~\ref{example.running
    example}.  Every vertex has four outgoing edges labeled $x_1$ to $x_4$
  and self-loops are not drawn.
\label{figure.tau uniform}}
\end{center}
\end{figure}
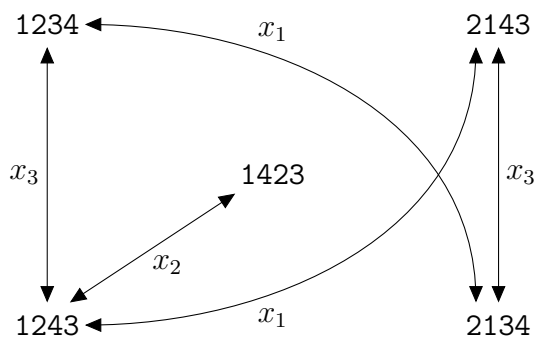
The transition matrix,  with the lexicographically ordered basis, is given by
\[
\begin{pmatrix}
x_{2}+x_{4} & x_{3}  & 0 & x_{1} & 0 \\
x_{3} & x_{4} & x_{2} & 0 & x_{1} \\
0 & x_{2} & x_{1}+x_{3}+x_{4} & 0 & 0 \\
x_{1} & 0 & 0 & x_{2}+x_{4} & x_{3} \\
0 & x_{1} & 0 & x_{3} & x_{2}+x_{4}
\end{pmatrix}.
\]
Note that the weight $x_4$ only appears on the diagonal since $\tau_4$
acts as the identity for $n=4$.  By construction, the column sums of
the transition matrix are one. Note that in this example the row sums
are also one (since the matrix is symmetric), which means that the
stationary state of this Markov chain is uniform. We will prove this
in general in Theorem~\ref{theorem.uniform transposition}.
\end{eg}

\subsection{Transposition graph}
\label{subsection.tau}

The {\bf transposition graph} is defined in the same way as the uniform transposition graph, except that 
the edges are given the symbolic weight $x_{\pi_j}$ whenever $\tau_{j}$ takes $\pi \to \pi'$.

\begin{eg} \label{example.tau}
The transposition graph for the poset in Example~\ref{example.running example} is
illustrated in Figure~\ref{figure.tau}. 
\begin{figure}[h]
\begin{center}
\begin{tikzpicture} [>=triangle 45]
\draw (0,4) node {\verb!1234!};
\draw (0,0) node {\verb!1243!};
\draw (6,0) node {\verb!2134!};
\draw (6,4) node {\verb!2143!};
\draw (3,2) node {\verb!1423!};
\draw [->] (6.3,.3) to [out=70, in = -70] (6.3,3.7);
\draw (7,2) node {$x_3$};
\draw [->] (6,3.7) -- (6,.3);
\draw (6.3,2) node {$x_4$};
\draw [->] (-.3,.3) to [out=110, in = -110] (-.3,3.7);
\draw (-1,2) node {$x_4$};
\draw [->] (0,3.7) -- (0,.3);
\draw (-.3,2) node {$x_3$};
\draw [->] (.5,.2) to [out = 30, in = 240] (2.75, 1.75);
\draw (1.5,1) node {$x_2$};
\draw [->] (2.5,2) to [out = 210, in = 60] (.3,.3);
\draw (1.5,1.7) node {$x_4$};
\draw [->] (.5,-.2) to [out = 0, in = -90] (5.8,3.7);
\draw (3,3.95) node {$x_1$};
\draw [->] (5.6,3.7) to [out = -90, in = 0] (.5,0);
\draw (3,3.35) node {$x_2$};
\draw [->] (.5,4.2) to [out = 0, in = 90] (5.8,.3);
\draw (3, .65) node {$x_2$};
\draw [->] (5.6,.3) to [out = 90, in = 0] (.5,4);
\draw (3,0) node {$x_1$};
\end{tikzpicture}
\caption{Transposition graph for Example~\ref{example.running
    example}. Every vertex has four outgoing edges labeled $x_1$ to
  $x_4$ and self-loops are not drawn.
\label{figure.tau}}
\end{center}
\end{figure}
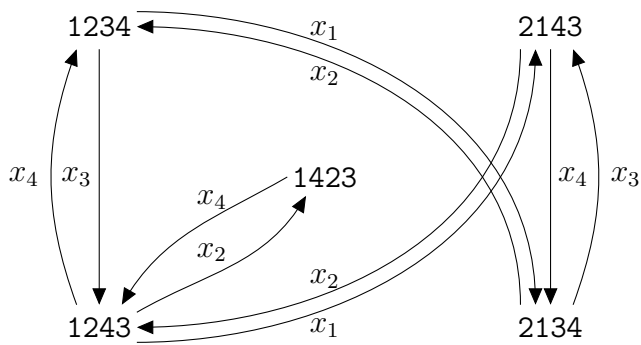
The transition matrix is given by
\be
\begin{pmatrix}
x_{2}+x_{4} & x_{4}  & 0 & x_{2} & 0 \\
x_{3} & x_{3} & x_{4} & 0 & x_{2} \\
0 & x_{2} & x_{1}+x_{2}+x_{3} & 0 & 0 \\
x_{1} & 0 & 0 & x_{1}+x_{4} & x_{4} \\
0 & x_{1} & 0 & x_{3} & x_{1}+x_{3}
\end{pmatrix}.
\ee
Again, by definition the column sums are one, but the row sums are not one in this example.
In fact, the stationary distribution (column vector with eigenvalue 1) is given by the eigenvector
\be
\begin{pmatrix}
1, &
\ds {\frac {x_{{3}}}{x_{{4}}}}, &
\ds {\frac {x_{{2}}x_{{3}}}{{x_{{4}}}^{2}}}, &
\ds {\frac {x_{{1}}}{x_{{2}}}}, &
\ds {\frac {x_{{1}}x_{{3}}}{x_{{2}}x_{{4}}}}
\end{pmatrix}^{T} \; .
\ee
We give a closed form expression for the weights of the stationary distribution in the general case
in Theorem~\ref{theorem.transposition}.
\end{eg}

\subsection{Uniform promotion graph}
\label{subsection.promotion uniform}

The vertices of the {\bf uniform promotion graph} are labeled by elements of $\L(P)$ and there is an 
edge between $\pi$ and $\pi'$ if and only if $\pi' = \pi \partial_{j}$ for some $j \in [n]$. In this case, the edge 
is given the symbolic weight $x_{j}$.

\begin{eg}
The uniform promotion graph for the poset in Example~\ref{example.running example} 
is illustrated in Figure~\ref{figure.uniform promotion}.
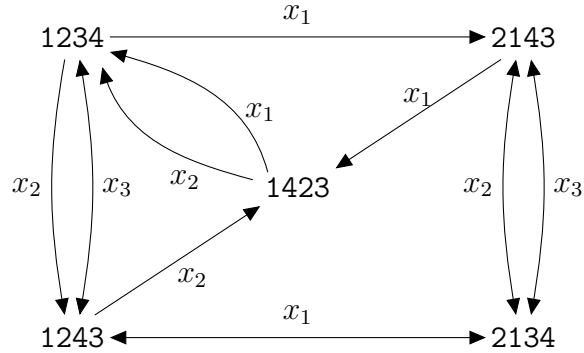
\begin{figure}[h]
\begin{center}
\begin{tikzpicture} [>=triangle 45]
\draw (0,4) node {\verb!1234!};
\draw (0,0) node {\verb!1243!};
\draw (6,0) node {\verb!2134!};
\draw (6,4) node {\verb!2143!};
\draw (3,2) node {\verb!1423!};
\draw [<->] (.1,.3) to[out=80, in=-80] (.1,3.7);
\draw (.6,2) node {$x_3$};
\draw [->] (.5,4) -- (5.5,4);
\draw (3,4.3) node {$x_1$};
\draw [->] (-.1,3.7) to[out=260, in=100] (-.1,.3);
\draw (-.6,2) node {$x_2$};
\draw [<->] (.5,0) -- (5.5,0);
\draw (3,.3) node {$x_1$};
\draw [->] (.3,.3) -- (2.5,1.75);
\draw (1.6,.8) node {$x_2$};
\draw [<->] (6.1,.3) to [out=80, in = -80] (6.1,3.7);
\draw (6.6,2) node {$x_3$};
\draw [<->] (5.9,.3) to [out = 100, in = -100] (5.9,3.7);
\draw (5.4,2) node {$x_2$};
\draw [->] (5.7,3.7) -- (3.5,2.25);
\draw (4.6,3.2) node {$x_1$};
\draw [->] (2.6,2.2) to[out=105, in = -15] (.5,3.8);
\draw (2.5,3) node {$x_1$};
\draw [->] (2.4,2.1) to[out=165, in = -75] (.4,3.6);
\draw (1.5,2.1) node {$x_2$};
\end{tikzpicture}
\end{center}
\caption{Uniform promotion graph for Example~\ref{example.running
    example}. Every vertex has four outgoing edges labeled $x_1$ to
  $x_4$ and self-loops are not drawn.
\label{figure.uniform promotion}}
\end{figure}
The transition matrix, with the lexicographically ordered basis, is given by
\[
\begin{pmatrix}
x_{4} & x_{3}  & x_{1}+x_{2} & 0 & 0 \\
x_{2}+x_{3} & x_{4} & 0 & x_{1} & 0 \\
0 & x_{2} & x_{3}+x_{4} & 0 & x_{1} \\
0 & x_{1} & 0 & x_{4} & x_{2}+x_{3} \\
x_{1} & 0 & 0 & x_{2} + x_{3} & x_{4}
\end{pmatrix}\;.
\]
Note that as in Example~\ref{example.tau uniform} the row sums are one
although the matrix is not symmetric, so that the stationary state of
this Markov chain is uniform. We prove this for general finite posets
in Theorem~\ref{theorem.uniform promotion}.

As in the uniform transposition graph, $x_{4}$ occurs only on the
diagonal in the above transition matrix.  This is because the action
of $\partial_{4}$ (or in general $\partial_n$) maps every linear
extension to itself resulting in a loop.
\end{eg}

\subsection{Promotion graph}
\label{subsection.promotion}

The {\bf promotion graph} is defined in the same fashion as the uniform promotion graph with the exception that
the edge between $\pi$ and $\pi'$ when $\pi' = \pi \partial_{j}$ is given the weight $x_{\pi_j}$. 

\begin{eg} \label{example.promotion}
The promotion graph for the poset of Example~\ref{example.running example}
is illustrated in Figure~\ref{figure.promotion}. Although it might appear that there are many more
edges here than in Figure~\ref{figure.uniform promotion}, this is not the case.
\begin{figure}[h]
\begin{center}
\begin{tikzpicture} [>=triangle 45]
\draw (0,4) node {\verb!1234!};
\draw (0,0) node {\verb!1243!};
\draw (6,0) node {\verb!2134!};
\draw (6,4) node {\verb!2143!};
\draw (3,2) node {\verb!1423!};
\draw [->] (.1,.3) to[out=80, in=-80] (.1,3.7);
\draw (.6,2) node {$x_4$};
\draw [->] (-.1,3.7) to[out=260, in=100] (-.1,.3);
\draw (-.6,2) node {$x_2$};
\draw [->] (-.3,3.7) to[out=225, in = 135] (-.3,.3);
\draw (-1.3,2) node {$x_3$};
\draw [->] (.5,4) -- (5.5,4);
\draw (3,4.3) node {$x_1$};
\draw [->] (.5,.1) to [out = 15, in = 165] (5.5,.1);
\draw (3,.7) node {$x_1$};
\draw [->] (5.5,-.1) to [out = -165, in = -15] (.5,-.1);
\draw (3,-.7) node {$x_2$};
\draw [->] (.3,.3) -- (2.5,1.75);
\draw (1.6,.8) node {$x_2$};
\draw [->] (6.1,.3) to [out=80, in = -80] (6.1,3.7);
\draw (6.6,2) node {$x_3$};
\draw [->] (6.3,3.7) to [out = -45, in = 45] (6.3,.3);
\draw (7.3,2) node {$x_4$};
\draw [<->] (5.9,.3) to [out = 100, in = -100] (5.9,3.7);
\draw (5.4,2) node {$x_1$};
\draw [->] (5.7,3.7) -- (3.5,2.25);
\draw (4.6,3.2) node {$x_2$};
\draw [->] (2.6,2.2) to[out=105, in = -15] (.5,3.8);
\draw (2.5,3) node {$x_1$};
\draw [->] (2.4,2.1) to[out=165, in = -75] (.4,3.6);
\draw (1.5,2.1) node {$x_4$};
\end{tikzpicture}
\caption{Promotion graph for Example~\ref{example.running
    example}. Every vertex has four outgoing edges labeled $x_1$ to
  $x_4$ and self-loops are not drawn.
\label{figure.promotion}}
\end{center}
\end{figure}
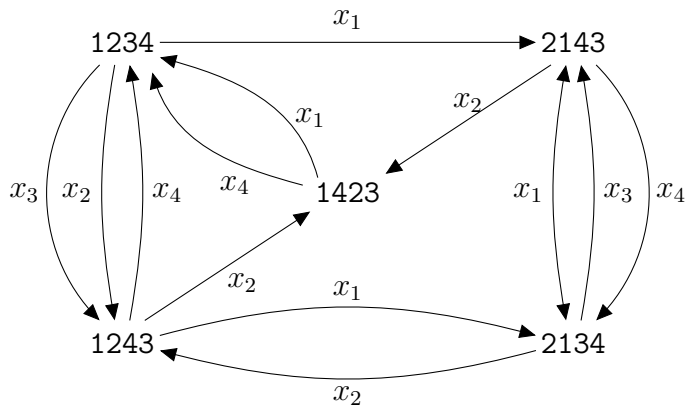
The transition matrix this time is given by
\[
\begin{pmatrix}
x_{4} & x_{4}  & x_{1}+x_{4} & 0 & 0 \\
x_{2}+x_{3} & x_{3} & 0 & x_{2} & 0 \\
0 & x_{2} & x_{2}+x_{3} & 0 & x_{2} \\
0 & x_{1} & 0 & x_{4} & x_{1}+x_{4} \\
x_{1} & 0 & 0 & x_{1} + x_{3} & x_{3}
\end{pmatrix}\;.
\]
Notice that row sums are no longer one. The stationary distribution,
as a vector written in row notation is
\[
\begin{pmatrix}
1, &
\ds \frac{x_{1}+x_{2}+x_{3}}{x_{1}+x_{2}+x_{4}}, &
\ds\frac{(x_{1}+x_{2})(x_{1}+x_{2}+x_{3})}
{(x_{1}+x_{2})(x_{1}+x_{2}+x_{4})}, &
\ds \frac{x_{1}}{x_{2}}, &
\ds \frac{x_{1}(x_{1}+x_{2}+x_{3})}
{x_{2}(x_{1}+x_{2}+x_{4})} 
\end{pmatrix}^{T}\; .
\]
Again, we will give a general such result in Theorem~\ref{theorem.promotion}.
\end{eg}

In Appendix~\ref{section.appendix}, implementations of these Markov
chains in {\tt Sage} and {\tt Maple} are discussed.

\section{Properties of the various Markov chains} \label{section.properties}

In Section~\ref{subsection.irreducible} we prove that the Markov chains defined in
Section~\ref{section.markov chains} are all irreducible.
This is used in Section~\ref{subsection.stationary} to conclude that their
stationary state is unique and either uniform or given by an explicit
product formula in their weights.

Throughout this section we fix a poset $P$ of size $n$
and let $\L:=\L(P)$ be the set of its linear extensions.

\subsection{Irreducibility}
\label{subsection.irreducible}

We now show that the four graphs of Section~\ref{section.markov chains} are all {\bf strongly connected}.

\begin{prop}
\label{proposition.strongly_connected}
Consider the digraph $G$ whose vertices are labeled by elements of $\L$ and whose edges are given as follows: 
for $\pi, \pi' \in \L$, there is an edge between $\pi$ and $\pi'$ in $G$ if and only if 
$\pi' = \pi \partial_{j}$ (resp. $\pi' = \pi \tau_j$) for some $j \in [n]$ (resp. $j\in [n-1]$). Then $G$ is strongly connected.
\end{prop}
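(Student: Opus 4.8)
The plan is to show strong connectivity for the promotion digraph first, and then deduce it for the transposition digraph. Since the two edge-weightings (uniform versus non-uniform) do not change which edges are present, it suffices to treat the unweighted graphs, and since the edge sets for the $\partial_j$ and $\tau_j$ versions differ only in the labeling, Lemma~\ref{lemma.tau_partial} will let me transfer connectivity from one to the other. Concretely, because each $\tau_j$ can be written as a product of the $\partial_i$, any directed path available in the $\tau$-graph is also realizable as a (longer) directed path in the $\partial$-graph, so it is enough to prove that the $\partial$-digraph $G$ is strongly connected; the $\tau$-digraph then inherits a directed path between any two vertices as well, since each $\partial_j = \tau_j \cdots \tau_{n-1}$ is itself a product of $\tau$-edges.

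The key observation is that the identity permutation $e$ always lies in $\L$, and I claim it suffices to show that from every $\pi \in \L$ there is a directed path to $e$. Indeed, if I can reach $e$ from any vertex, then strong connectivity follows from a standard argument: for arbitrary $\pi, \pi'$, concatenate a path $\pi \to e$ with a path $e \to \pi'$; the latter exists because the operators $\partial_j$ act \emph{surjectively} on $\L$ (each $\partial_j$ is a bijection on $\L$, being a composition of the involutions $\tau_i$), so the digraph is functional with invertible steps and reachability to $e$ from all vertices is equivalent to reachability from $e$ to all vertices. I would make this precise by noting that $G$ has the same underlying edge set read forwards and backwards once we know every $\partial_j$ is a bijection of the finite set $\L$, so "every vertex reaches $e$" and "$e$ reaches every vertex" are equivalent.

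The heart of the argument is therefore to exhibit, for each $\pi \neq e$, a single promotion move (or short sequence) that strictly decreases some natural statistic measuring the distance from $e$. The natural choice is to find the smallest index $j$ at which $\pi$ disagrees with $e$, i.e. the least $j$ with $\pi_j \neq j$, equivalently the position of the label $1$ if $1$ is not already in position $1$, and more generally to drive the entries into increasing order. I would argue that applying $\partial_j$ for an appropriately chosen $j$ (keyed to the first position where $\pi$ fails to be the identity) moves $\pi$ strictly closer to $e$ in, say, the number of positions agreeing with the identity, or in a suitable lexicographic/inversion statistic on $\L$. Here one must use the precise combinatorial description of promotion: the seed-$j$ algorithm removes the smallest uncovered label and slides minima upward, and I would verify that it can be used to place the correct value into the target position without disturbing the already-correct initial segment.

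The main obstacle I anticipate is precisely this last step: showing that the chosen promotion move decreases the statistic \emph{while preserving the prefix already equal to $e$}, since promotion is a global operation on the subposet $P_j$ and can in principle reshuffle many entries at once. The delicate point is to choose the statistic and the index $j$ so that the monotonicity is genuinely guaranteed by the poset structure (the linear-extension condition ensuring that the relevant labels are comparable or incomparable exactly as needed), rather than merely observed in examples. I expect to handle this by inducting on the positions of $P$ in a linear extension order of $P$ itself, peeling off a minimal element and reducing to a poset of size $n-1$, so that the inductive hypothesis supplies connectivity on the smaller poset and only the placement of the final label needs a direct promotion argument.
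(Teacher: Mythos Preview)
Your overall architecture matches the paper's proof almost exactly: reduce to reachability of $e$ using the fact that each $\partial_j$ is a bijection of finite order on $\L$ (so every edge lies on a directed cycle and reachability to $e$ is equivalent to reachability from $e$), prove reachability of $e$ by induction on $n$ by peeling off a minimal element of $P$, and derive the $\tau$ case from the $\partial$ case via the factorization $\partial_j = \tau_j\tau_{j+1}\cdots\tau_{n-1}$.

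The gap is exactly the step you flag as the ``main obstacle'': you never identify which promotion move to apply, nor the statistic it decreases. Your phrase ``equivalently the position of the label $1$'' already conflates two different indices, and ``applying $\partial_j$ for an appropriately chosen $j$'' is left unspecified. The paper's missing ingredient is concrete and short. Suppose vertex~$1$ of $P$ (necessarily minimal, since $e\in\L$) carries label $j>1$ under $\pi$, and label~$1$ sits at some other vertex~$k$. Because vertex~$1$ is minimal and $k\neq 1$, vertex~$1$ does not lie in the upper set of vertex~$k$; hence the sliding process in $\partial_1$ never visits vertex~$1$, and the only effect there is the final ``decrement all labels larger than~$1$'' step. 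Thus one application of $\partial_1$ changes the label at vertex~$1$ from $j$ to $j-1$. After $j-1$ applications, vertex~$1$ carries label~$1$; now $\partial_2,\dots,\partial_n$ act on $\L(P)$ exactly as $\partial_1,\dots,\partial_{n-1}$ act on $\L(P\setminus\{1\})$, and induction applies. This is the ``direct promotion argument'' you gesture at but do not supply; once you make it, your statistic is simply the label currently at vertex~$1$, and the rest of your outline goes through.
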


\begin{proof}
We begin by showing the statement for the generalized promotion operators $\partial_j$.
From an easy generalization of \cite{stanprom}, we see that extended promotion, given by 
$\partial_{j}$, is a bijection for any $j$. Therefore, every element of $\L$ has exactly one such edge pointing in and 
one such edge pointing out. Moreover, $\partial_j$ has finite order, so that $\pi \partial_{j}^{k} = \pi$ for some $k$. 
In other words, the action of $\partial_{j}$ splits $\L$ into disjoint cycles. In particular, $\pi \partial_{n} = \pi$ 
for all $\pi$ so that it decomposes $\L$ into cycles of size 1.

It suffices to show that there is a directed path from any $\pi$ to 
the identity $e$.  We prove this by induction on $n$. The case of the poset with a single element is vacuous.
Suppose the statement is true for every poset of size $n-1$. We have two cases. First, suppose $\pi^{-1}_{1}=1$. In this case 
$\partial_{2},\dots,\partial_{n}$ act on $\L$ in exactly the same way as $\partial_{1},\dots,\partial_{n-1}$ on $\L'$, the set 
of linear extensions of $P'$, the poset obtained from $P$ by removing 1. Then the directed path exists by the induction 
assumption.

Instead suppose $\pi^{-1}_{1}=j$ and $\pi^{-1}_{k}=1$, for $j,k>1$. 
In other words, the label $j$ is at position 1 and label $1$ is at position $k$ of $P$.
Since $j$ is at the position of a minimal element in $P$, it does not belong to the upper set of 1 (that is $j \not \succeq 1$ 
in the relabeled poset). Thus, the only effect on $j$ of applying 
$\partial_{1}$ is to reduce it by 1, i.e., if $\pi'=\pi \partial_{1}$, then 
$\pi'^{-1}_{1}=j-1$. Continuing this way, we can get to the previous case by the action of $\partial_{1}^{j-1}$ on $\pi$.

The statement for the $\tau_j$ now follows from Lemma~\ref{lemma.tau_partial}.
\end{proof}

\begin{cor}
Assuming that the edge weights are strictly positive, all Markov
chains of Section~\ref{section.markov chains} are irreducible and
their stationary distribution is unique.
\end{cor}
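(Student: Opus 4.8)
The plan is to reduce the corollary entirely to Proposition~\ref{proposition.strongly_connected} together with the general facts about continuous-time Markov chains recalled at the start of Section~\ref{section.markov chains}. First I would observe that each of the four chains is built on one of exactly two underlying combinatorial digraphs: the two transposition chains of Sections~\ref{subsection.tau uniform} and~\ref{subsection.tau} live on the digraph whose edges are $\pi \to \pi\tau_j$ for $j \in [n-1]$, while the two promotion chains of Sections~\ref{subsection.promotion uniform} and~\ref{subsection.promotion} live on the digraph whose edges are $\pi \to \pi\partial_j$ for $j \in [n]$. By Proposition~\ref{proposition.strongly_connected}, both of these digraphs are strongly connected.

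The only real point to verify is that passing from the bare combinatorial digraph $G$ to the weighted transition structure does not destroy connectivity. Here I would invoke the standing hypothesis that the weights $x_1,\dots,x_n$ are strictly positive. Under each of the two weightings every edge of $G$ is decorated by some label $x_i$, hence is traversed at a strictly positive rate; the distinction between the ``uniform'' weighting $x_j$ and the non-uniform weighting $x_{\pi_j}$ only affects which positive label sits on a given edge, not whether that edge is present. Consequently the off-diagonal support of each generator coincides exactly with the edge set of the corresponding $G$, so the digraph associated to each of the four generators is precisely the strongly connected digraph supplied by Proposition~\ref{proposition.strongly_connected}. By the definition of irreducibility recalled in Section~\ref{section.markov chains}, all four chains are therefore irreducible.

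Uniqueness of the stationary state then follows immediately from the Perron--Frobenius statement recorded in Section~\ref{section.markov chains}: for an irreducible finite-state chain the generator has $0$ as a simple eigenvalue, so up to normalization there is a unique right eigenvector with eigenvalue $0$, and its (positive) entries give the stationary distribution. I expect essentially no obstacle at this stage; all of the substantive work is already contained in the strong-connectivity argument of Proposition~\ref{proposition.strongly_connected}, and the positivity hypothesis enters only to guarantee that no edge of $G$ is assigned rate zero, which is exactly what lets strong connectivity of $G$ transfer to irreducibility of the weighted chain.
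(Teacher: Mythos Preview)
Your proposal is correct and follows essentially the same approach as the paper: invoke Proposition~\ref{proposition.strongly_connected} for strong connectivity of the underlying digraphs, observe that strictly positive weights preserve all edges, and then appeal to standard Markov chain theory for irreducibility and uniqueness of the stationary state. The paper compresses this into a single sentence citing an external reference, whereas you spell out the Perron--Frobenius step using the facts already recalled in Section~\ref{section.markov chains}, but the substance is identical.
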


\begin{proof}
Since the underlying graph of all four Markov chains of
Section~\ref{section.markov chains} is strongly connected, they are
irreducible. The existence of a single loop at any vertex of the graph
guarantees aperiodicity. The uniqueness of the stationary distribution
then follows by standard theory of Markov
chains~\cite[Chapter 1]{levin_peres_wilmer.2009}.
\end{proof}

\subsection{Stationary states}
\label{subsection.stationary}

In this section we prove properties of the stationary state of the various discrete-time Markov chains defined in 
Section~\ref{section.markov chains}, assuming that all $x_i$'s are strictly positive.

\begin{thm} \label{theorem.uniform promotion}
The discrete-time Markov chain according to the uniform promotion graph
has the uniform stationary distribution, that is, each linear extension is equally likely to occur.
\end{thm}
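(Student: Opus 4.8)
The plan is to exhibit the all-ones vector as a right null-eigenvector of the generator $L$. By the way the transition matrix is constructed its column sums already vanish, so the all-ones \emph{row} vector is a left $0$-eigenvector; and since the underlying graph is strongly connected by Proposition~\ref{proposition.strongly_connected}, the chain is irreducible and hence its $0$-eigenspace is one-dimensional, with the stationary distribution being the unique normalized right $0$-eigenvector. Thus it suffices to show that the all-ones \emph{column} vector $\mathbf 1$ satisfies $L\mathbf 1 = 0$, equivalently that every row sum of $L$ is zero.

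To do this I would write $L$ in operator form. For each $j\in[n]$ let $D_j$ be the $\L\times\L$ matrix with $(D_j)_{\pi',\pi}=1$ when $\pi'=\pi\partial_j$ and $0$ otherwise. Because the weight attached to the edge $\pi\to\pi\partial_j$ is $x_j$, depending only on the index $j$ and not on $\pi$, the generator factors as
\[
  L \;=\; \sum_{j=1}^{n} x_j\,(D_j - I),
\]
where the $-x_j I$ terms produce exactly the negative diagonal of non-loop out-weights, and loop contributions cancel between $D_j$ and $I$.

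Now I invoke the fact, used already in the proof of Proposition~\ref{proposition.strongly_connected}, that each extended promotion operator $\partial_j$ is a bijection of $\L$. Hence each $D_j$ is a permutation matrix, so each of its rows contains exactly one $1$ and $D_j\mathbf 1 = \mathbf 1$. Therefore $L\mathbf 1 = \sum_j x_j(D_j\mathbf 1 - \mathbf 1) = 0$, as desired. Equivalently, in purely combinatorial terms, the row sum indexed by $\pi$ is the total incoming edge weight minus the total outgoing edge weight at $\pi$ (loops excluded); bijectivity of $\partial_j$ makes the incoming weight $\sum_{j:\,\pi\partial_j\neq\pi}x_j$ coincide with the outgoing weight of the same form, so the difference vanishes.

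I do not anticipate a genuine obstacle: the argument is short once $L$ is recognized as $\sum_j x_j(D_j-I)$. The only point requiring care is the bookkeeping of loops, which is harmless because the set of indices $j$ fixing a given $\pi$ is the same whether counted on the in- or the out-side. The essential structural input is that the weight depends only on $j$; this is precisely what fails for the (non-uniform) promotion graph of Section~\ref{subsection.promotion}, where the weight $x_{\pi_j}$ depends on $\pi$, and that is why that chain instead admits the nontrivial product stationary distribution recorded in Example~\ref{example.promotion}.
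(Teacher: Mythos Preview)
Your proof is correct and follows essentially the same route as the paper: both arguments hinge on the fact that each extended promotion operator $\partial_j$ is a bijection of $\L(P)$, so the row sums of the generator vanish and the all-ones vector is a right $0$-eigenvector, with uniqueness supplied by irreducibility. Your decomposition $L = \sum_j x_j(D_j - I)$ with $D_j$ a permutation matrix is a slightly more explicit packaging of the same observation the paper makes informally.
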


\begin{proof}
Stanley showed~\cite{stanprom} that the promotion operator has finite order, that is $\partial^k=\mathrm{id}$
for some $k$. The same arguments go through for the extended promotion operators $\partial_j$. Therefore
at each vertex $\pi\in \L(P)$, there is an incoming and outgoing edge corresponding to $\partial_j$ for each $j\in [n]$.  For the 
uniform promotion graph, an edge for $\partial_j$ is assigned weight $x_j$, and hence the row sum of the 
transition matrix is one, which proves the result. Equivalently, the all ones vector is the required eigenvector.
\end{proof}

\begin{thm} \label{theorem.uniform transposition}
The discrete-time Markov chain according to the uniform transposition graph
has the uniform stationary distribution.
\end{thm}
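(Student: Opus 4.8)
The plan is to follow the strategy of Theorem~\ref{theorem.uniform promotion} and show that the all-ones vector $(1,\dots,1)^T$ is a right eigenvector of the transition matrix with eigenvalue $0$; this is exactly the statement that every row sum of the transition matrix vanishes. Since the column sums are zero by construction, the row indexed by $\pi' \in \L$ sums to the total weight of edges entering $\pi'$ minus the total weight of edges leaving $\pi'$ (the diagonal loop term is excluded from both), so it suffices to prove that incoming and outgoing weights balance at each vertex.

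The essential input is that each $\tau_j$ is an involution, $\tau_j^2 = 1$, combined with the fact that in the \emph{uniform} transposition graph the weight of an edge traversed by $\tau_j$ is $x_j$, depending only on $j$ and not on the vertex. Fix $\pi'$ and let $S \subseteq [n-1]$ be the set of indices $j$ for which $\tau_j$ acts nontrivially on $\pi'$. For each $j \in S$ there is precisely one edge leaving $\pi'$ of weight $x_j$, namely $\pi' \to \pi'\tau_j$. Conversely, an incoming edge $\pi \to \pi'$ of weight $x_j$ means $\pi\tau_j = \pi'$ nontrivially, which by the involution property forces $\pi = \pi'\tau_j$ with $j \in S$. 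Thus incoming and outgoing edges are indexed by the same set $S$ and carry identical weights, so both totals equal $\sum_{j \in S} x_j$ and the row sum is zero.

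Equivalently, and more transparently, the involution property together with the vertex-independence of the weights shows that the uniform transposition graph is symmetric: whenever $\pi \to \pi\tau_j$ is an edge of weight $x_j$, so is the reverse edge $\pi\tau_j \to \pi$, of the same weight. Hence the transition matrix is symmetric, as is already visible in Example~\ref{example.tau uniform}, and a symmetric matrix with vanishing column sums automatically has vanishing row sums. I expect no genuine obstacle here; the result is a direct consequence of the involution identity $\tau_j^2 = 1$, and the only point meriting care is to confirm that the loop contributions (where $\tau_j$ fixes $\pi'$) cancel, which they do since they are omitted from both the incoming and outgoing tallies.
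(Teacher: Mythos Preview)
Your proof is correct and follows essentially the same approach as the paper: the involution property of each $\tau_j$ makes the transition matrix symmetric, and since the column sums vanish by construction, so do the row sums. Your exposition is more detailed, but the argument is the same.
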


\begin{proof}
Since each $\tau_{j}$ is an involution, every incoming edge with weight $x_{j}$ has an outgoing edge with the 
same weight. Another way of saying the same thing is that the transition matrix is symmetric. By definition, the 
transition matrix is constructed so that column sums are one. Therefore, row sums are also one. 
\end{proof}

We now turn to the promotion and transposition graphs of Section~\ref{section.markov chains}.
In this case we find nice product formulas for the stationary weights.

\begin{thm} \label{theorem.promotion}
The stationary state weight $w(\pi)$ of the linear extension $\pi\in \L(P)$ for the discrete-time Markov chain for the 
promotion graph is given by
\be \label{formulaII}
w(\pi) = \prod_{i=1}^{n} \frac{x_{1}+ \cdots + x_{i}}
{x_{\pi_1}+ \cdots + x_{\pi_i}}\;,
\ee
assuming $w(e)=1$.
\end{thm}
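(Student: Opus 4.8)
The plan is to verify that the claimed vector $w$ satisfies the stationarity equation for the promotion graph, namely that for every linear extension $\pi \in \L(P)$,
\be
\sum_{j=1}^{n} x_{\pi_j} \, w(\pi) = \sum_{\substack{\sigma,\, j \\ \sigma \partial_j = \pi}} x_{\sigma_j} \, w(\sigma),
\ee
where the left side collects the total outgoing rate from $\pi$ (the negative diagonal entry, since the edge $\pi \to \pi\partial_j$ carries weight $x_{\pi_j}$) and the right side collects all incoming contributions. Since $\partial_n$ is the identity, the $j=n$ term contributes a loop on both sides and can be ignored; the effective outgoing rate from $\pi$ is then $\sum_{j=1}^{n-1} x_{\pi_j}$ after removing the loop, but it is cleaner to keep all $n$ terms symbolically and let the loops cancel. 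First I would record the product formula in a more usable telescoping form: writing $S_i = x_1 + \cdots + x_i$ and $S_i(\pi) = x_{\pi_1} + \cdots + x_{\pi_i}$, we have $w(\pi) = \prod_{i=1}^n S_i / S_i(\pi)$, and since $S_n(\pi) = S_n$ for every $\pi$ (both equal $x_1 + \cdots + x_n$), the $i=n$ factor is always $1$.

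The key structural input is an explicit description of the fibers of $\partial_j$, i.e.\ of which $\sigma$ map to a given $\pi$ under each $\partial_j$. Because each $\partial_j$ is a bijection on $\L(P)$ (as used in Proposition~\ref{proposition.strongly_connected}), each $\partial_j$ contributes exactly one incoming edge to $\pi$, from the preimage $\pi \partial_j^{-1}$. So the right-hand sum has exactly $n$ terms, one per $j$, and the content of the theorem is an exact matching between the single vertex $\pi$ on the left and its $n$ distinct promotion-preimages on the right. The natural strategy is to understand how $w$ changes along a single promotion step: I would compute the ratio $w(\sigma)/w(\pi)$ when $\pi = \sigma \partial_j$, and show that the weighted in/out balance holds. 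Here the crucial combinatorial fact is that $\partial_j = \tau_j \tau_{j+1} \cdots \tau_{n-1}$ acts on the one-line notation of $\sigma$ as a cyclic rearrangement of the entries in positions $j, j+1, \ldots, n$ that can be realized (since each nontrivial $\tau$ is an adjacent transposition of incomparable elements): the entry originally in position $j$ is carried rightward past a sequence of larger-labeled incomparable entries until it lodges at a local maximum, and the intervening entries each shift left by one. Tracking how the partial sums $S_i(\cdot)$ transform under this cyclic shift is what makes the telescoping product collapse.

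Concretely, I would fix $\pi$ and, for each $j$, identify the preimage $\sigma^{(j)} = \pi \partial_j^{-1}$ together with the entry $\sigma^{(j)}_j$ that labels its outgoing edge, then show that $x_{\sigma^{(j)}_j}\, w(\sigma^{(j)})$ telescopes against the product formula so that $\sum_j x_{\sigma^{(j)}_j}\, w(\sigma^{(j)}) = \big(\sum_j x_{\pi_j}\big)\, w(\pi)$. The mechanism I expect is a discrete summation-by-parts: the ratio $w(\sigma^{(j)})/w(\pi)$ will be a ratio of consecutive partial-sum denominators $S_\bullet(\pi)$, and when multiplied by the appropriate edge weight $x_{\sigma^{(j)}_j}$ and summed over $j$, the terms collapse to the total outgoing rate. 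The main obstacle, and where I would spend most of the effort, is pinning down precisely how a single $\partial_j$ permutes the entries of the one-line notation and hence exactly which denominators $S_i(\pi)$ and $S_i(\sigma^{(j)})$ differ and by what — the promotion operator is globally a cyclic shift on the relevant positions, but the set of positions that actually move depends on the comparabilities in $P$, so the bookkeeping of which partial sums change must be done carefully rather than assumed. A clean way to organize this, which I would try first, is to prove the telescoping identity $\sum_{j} x_{\pi_j} \prod_{i<j}\big(S_i(\pi)/S_{i}(\text{shifted})\big) = \sum_j x_{\pi_j}$ abstractly and then check it is exactly the balance equation; if that abstract identity resists, I would instead induct on $n$ by deleting the maximal element, mirroring the inductive deletion already used in Proposition~\ref{proposition.strongly_connected}.
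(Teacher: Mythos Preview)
Your setup is correct: the master equation, the bijectivity of each $\partial_j$ giving exactly one preimage per $j$, and the normalization with $S_n(\pi)=S_n$ are all right. But the primary strategy you propose---tracking how the partial sums $S_i(\sigma^{(j)})$ differ from $S_i(\pi)$ and hoping for a telescoping sum over $j$---runs into exactly the obstacle you flag: which positions actually move under $\partial_j^{-1}$ depends on the comparabilities in $P$, so the ratio $w(\sigma^{(j)})/w(\pi)$ has no uniform closed form in $j$, and no clean summation-by-parts emerges that way.

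The paper takes your fallback (induction on $n$), but the mechanism is not ``deleting the maximal element of $P$.'' The missing observation is that, because each $\tau_i$ is an involution, $\partial_j^{-1}=\tau_{n-1}\tau_{n-2}\cdots\tau_j$, so the $j$th preimage is
\[
\sigma^{(j)}=\pi\,\tau_{n-1}\tau_{n-2}\cdots\tau_j.
\]
Setting $\tilde\pi:=\pi\tau_{n-1}$, one has $\sigma^{(j)}=\tilde\pi\,\tau_{n-2}\cdots\tau_j$ for $1\le j\le n-1$; these are precisely the preimages of $\tilde\pi$ under the promotion operators on the first $n-1$ positions. Hence the incoming sum splits as the trivial $j=n$ term $x_{\pi_n}w(\pi)$ plus a sum which, by the inductive hypothesis applied to $\tilde\pi$ (viewed as a word of length $n-1$, ignoring its last letter), collapses to $(x_{\tilde\pi_1}+\cdots+x_{\tilde\pi_{n-1}})\,w(\tilde\pi)$. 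A two-case check (does $\tau_{n-1}$ act trivially on $\pi$ or not) using the single-step ratio
\[
w(\pi\tau_j)=\frac{S_j(\pi)}{S_{j-1}(\pi)+x_{\pi_{j+1}}}\,w(\pi)
\]
when $\tau_j$ acts nontrivially then yields the left-hand side $(\sum_i x_{\pi_i})\,w(\pi)$. So the inductive step is peeling off one $\tau_{n-1}$ from every inverse promotion, not removing an element from the poset; once you see that factorization the proof is short and the poset-dependent casework disappears entirely.
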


\begin{rem} \label{remark.constant}
The entries of $w$ do not, in general, sum to one. Therefore this
is not a true probability distribution, but this is easily remedied by a multiplicative constant $Z_{P}$ depending 
only on the poset $P$.
\end{rem}

\begin{proof}[Proof of Theorem~\ref{theorem.promotion}]
We prove the theorem by induction on $n$. The case $n=1$ is trivial. 
By Remark~\ref{remark.constant}, it suffices to prove the result
for any normalization of $w(\pi)$. 
For our purposes it is most convenient to use the normalization
\be \label{equation.w}
w(\pi) = \prod_{i=1}^{n} \frac{1}{x_{\pi_1}+ \cdots + x_{\pi_i}}.
\ee
To prove~\eqref{equation.w}, we need to show that it satisfies the master 
equation \eqref{master.equation}, rewritten as
\be \label{equation.master}
	w(\pi) \left(\sum_{i=1}^n x_{\pi_i} \right) = 
\sum_{\substack{j=1 \\ \pi'=\pi \tau_{n-1} \cdots \tau_j}}^n x_{\pi'_j} w(\pi').
\ee
The left-hand side is the contribution of the outgoing edges, whereas
the right-hand side gives the weights of the incoming edges of vertex $\pi$.

Singling out the term $j=n$ and setting $\tilde{\pi} := \pi \tau_{n-1}$, the 
right-hand side of~\eqref{equation.master} becomes
\be
x_{\pi_n} w(\pi) + 
\sum_{\substack{j=1 \\\pi' = \tilde{\pi} \tau_{n-2} \cdots \tau_j}}^{n-1} x_{\pi'_j} w(\pi').
\ee
Now, notice that the $n$-th entry of $\pi'$ in one-line notation in every term of the sum is
$\tilde\pi_n$ which is either $\pi_n$ or $\pi_{n-1}$. Let $\tilde\sigma$ be 
considered as a permutation of size $n-1$ given by $(\tilde\pi_1, \dots, 
\tilde\pi_{n-1})$. Then using the formula for $w$ in~\eqref{equation.w} to separate out
the last term in the product, we obtain
\be
\sum_{\substack{j=1 \\\pi' = \tilde{\pi} \tau_{n-2} \cdots \tau_j}}^{n-1} x_{\pi'_j} w(\pi')
= \frac 1{x_{\pi_1}+ \cdots + x_{\pi_n}}
\sum_{\substack{j=1 \\\sigma' = \tilde{\sigma} \tau_{n-2} \cdots \tau_j}}^{n-1} x_{\sigma'_j} 
w(\sigma')
\ee
The induction assumption now applies to the sum on the right hand side and 
hence~\eqref{equation.master} yields
\begin{align*}
&x_{\pi_n} w(\pi) + 
\sum_{\substack{j=1 \\\pi' = \tilde{\pi} \tau_{n-2} \cdots \tau_j}}^{n-1} x_{\pi'_j} w(\pi')\\
= &x_{\pi_n} w(\pi) + \frac 1{x_{\pi_1}+ \cdots + x_{\pi_n}} 
w(\tilde{\sigma}) (x_{\tilde{\pi}_1} + \cdots + x_{\tilde{\pi}_{n-1}}), \\
= &x_{\pi_n} w(\pi) + w(\tilde{\pi}) (x_{\tilde{\pi}_1} + \cdots + x_{\tilde{\pi}_{n-1}}).
\end{align*}

We now distinguish two cases: either $\tau_{n-1}$ acts trivially on $\pi$ or not. In the first case, set $\tilde{\pi} = \pi$ and 
we immediately obtain the left-hand side of~\eqref{equation.master}. In the second case, observe that $w(\pi)$ as 
in~\eqref{equation.w} satisfies the following recursion if $\tau_j$ acts non-trivially
\[
		w(\pi \tau_j) = \frac{x_{\pi_1} + \cdots + x_{\pi_j}}{x_{\pi_1} + \cdots + x_{\pi_{j-1}} + x_{\pi_{j+1}}} w(\pi).
\]
Using this for $j=n-1$ and $x_{\tilde{\pi}_1} + \cdots + x_{\tilde{\pi}_{n-1}} = x_{\pi_1} + \cdots + x_{\pi_{n-2}} + x_{\pi_n}$ 
yields the left-hand side of~\eqref{equation.master}.
\end{proof}

When $P$ is the $n$-antichain, then $\L = S_{n}$. In this case, the
probability distribution of Theorem~\ref{theorem.promotion} has been
studied in a completely different context by Hendricks
\cite{hendricks1,hendricks2} and is known in the literature as the
\textbf{Tsetlin library}~\cite{tsetlin}, which we now
describe. Suppose that a library consists of $n$ books
$b_{1},\dots,b_{n}$ on a single shelf. Assume that only one book is
picked at a time and is returned before the next book is picked
up. The book $b_{i}$ is picked with probability $x_{i}$ and placed at
the end of the shelf.

We now explain why promotion on the $n$-antichain is the Tsetlin
library. A given ordering of the books can be identified with a permutation
$\pi$. The action of $\partial_k$ on $\pi$ gives $\pi \tau_k \cdots
\tau_{n-1}$ by \eqref{equation.promotion tau}, where now all the
$\tau_i$'s satisfy the braid relation since none of the $\pi_j$'s are
comparable. Thus the $k$-th element in $\pi$ is moved all the way to
the end. The probability with which this happens is $x_{\pi_k}$, which
makes this process identical to the action of the Tsetlin library.

The stationary distribution of the Tsetlin library
is a special case of Theorem~\ref{theorem.promotion}. In this case, $Z_{P}$ of Remark~\ref{remark.constant} also 
has a nice product formula, leading to the probability distribution,
\be 
w(\pi) = \prod_{i=1}^{n} \frac{x_{\pi_{i}}}{x_{\pi_1}+ \cdots + x_{\pi_i}}.
\ee
Letac~\cite{letac} considered generalizations of the Tsetlin library to rooted trees (meaning that each element in $P$
besides the root has precisely one successor). Our results hold for any finite poset $P$.

\begin{thm} \label{theorem.transposition}
The stationary state weight $w(\pi)$ of the linear extension $\pi \in \L(P)$ of the transposition graph 
is given by
\be \label{equation.w tau}
w(\pi) = \prod_{i=1}^{n} x_{\pi_{i}}^{i-\pi_{i}} \;,
\ee
assuming $w(e)=1$.
\end{thm}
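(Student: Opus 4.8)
The plan is to exploit the fact that, because each $\tau_j$ is an involution, the transposition graph is undirected: the edge joining $\pi$ and $\pi\tau_j$ is traversed from $\pi$ with rate $x_{\pi_j}$, while the same edge is traversed from $\pi\tau_j$ with rate $x_{(\pi\tau_j)_j} = x_{\pi_{j+1}}$. This asymmetry between the two directions is exactly what a reversible chain feeds on, so I would try to show that the candidate $w$ of \eqref{equation.w tau} satisfies detailed balance rather than attacking the master equation directly. Concretely, I would aim to verify that for every $\pi \in \L(P)$ and every $j \in [n-1]$ on which $\tau_j$ acts nontrivially,
\[
	w(\pi)\, x_{\pi_j} = w(\pi\tau_j)\, x_{\pi_{j+1}},
\]
that is, $w(\pi\tau_j)/w(\pi) = x_{\pi_j}/x_{\pi_{j+1}}$. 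Summing these identities over all $j$ on which $\tau_j$ acts nontrivially on $\pi$ reproduces exactly the balance condition analogous to \eqref{equation.master}: the left-hand sides add to $w(\pi)$ times the total outgoing rate, and the right-hand sides add to the total incoming contribution to $\pi$. Hence establishing the local detailed-balance identity already yields stationarity of $w$.

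The heart of the argument is then the purely local computation of this ratio. Since $\pi\tau_j$ agrees with $\pi$ except that the entries in positions $j$ and $j+1$ are interchanged, only the two factors of \eqref{equation.w tau} indexed by $j$ and $j+1$ are affected. Writing $a=\pi_j$ and $b=\pi_{j+1}$, the pair of factors $x_a^{\,j-a}\,x_b^{\,(j+1)-b}$ attached to $\pi$ becomes $x_b^{\,j-b}\,x_a^{\,(j+1)-a}$ for $\pi\tau_j$, so in the quotient $w(\pi\tau_j)/w(\pi)$ the exponent of $x_a$ increases by $(j+1)-j = 1$ and that of $x_b$ decreases by $1$, giving $x_a^{1}x_b^{-1} = x_{\pi_j}/x_{\pi_{j+1}}$ — precisely the edge-weight ratio required above. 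This is the one step that has to be done with care: the specific exponent $i-\pi_i$ is what makes the position index and the label index conspire, so that transposing an adjacent pair shifts each of the two relevant exponents by exactly $\pm 1$.

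Finally, I would note that the formula is consistent with the stated normalization $w(e)=1$, since for the identity $\pi_i=i$ forces every exponent $i-\pi_i$ to vanish. Because the transposition graph is strongly connected by Proposition~\ref{proposition.strongly_connected}, the stationary state is unique up to an overall scalar (as established in Section~\ref{subsection.irreducible}); thus verifying detailed balance for the explicit candidate \eqref{equation.w tau} suffices to identify it as \emph{the} stationary distribution. I do not anticipate a genuine obstacle here, as the verification is elementary; the only real pitfall is the bookkeeping of the direction conventions for the edge weights — obtaining $x_{\pi_j}$ for the outgoing edge but $x_{\pi_{j+1}}=x_{(\pi\tau_j)_j}$ for the reverse edge — together with keeping the roles of the position index $i$ and the label index $\pi_i$ straight inside the exponent $i-\pi_i$.
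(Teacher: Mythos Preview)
Your proposal is correct and is essentially the same as the paper's proof. The paper verifies the master equation by matching the $j$-th term on each side, computing $w(\pi^{(j)})/w(\pi) = x_{\pi_j}/x_{\pi_{j+1}}$ when $\tau_j$ acts nontrivially (and noting the trivial case contributes $x_{\pi_j}w(\pi)$ to both sides); this term-by-term matching is precisely your detailed-balance identity, just not named as such.
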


\begin{proof}
To prove the above result, we need to show that it satisfies the
master equation \eqref{master.equation}, rewritten as
\be \label{equation.master.transp} w(\pi) \Bigl(\sum_{i=1}^n x_{\pi_i}
\Bigr) = \sum_{j=1}^n x_{\pi^{(j)}_j} w(\pi^{(j)}), \ee where
$\pi^{(j)}=\pi \tau_j$. Let us compare $\pi^{(j)}$ and $\pi$. By
definition, they differ at the positions $j$ and $j+1$ at most. Either
$\pi^{(j)}=\pi$, or $\pi^{(j)}_{j} = \pi_{j+1}$ and $\pi^{(j)}_{j+1} =
\pi_{j}$.  In the former case, we get a contribution to the right hand
side of~\eqref{equation.master.transp} of $x_{\pi_{j}} w(\pi)$,
whereas in the latter, $x_{\pi_{j+1}} w(\pi^{(j)})$.  But note that in
the latter case by~\eqref{equation.w tau}
$$ \frac{w(\pi^{(j)})}{w(\pi)} = \frac{x_{\pi_{j+1}}^{j-\pi_{j+1}} x_{\pi_{j}}^{j+1-\pi_{j}} }
{x_{\pi_{j}}^{j-\pi_{j}} x_{\pi_{j+1}}^{j+1-\pi_{j+1}} } = \frac{x_{\pi_{j}}} {x_{\pi_{j+1}}},
$$
and the contribution is again $x_{\pi_{j}} w(\pi)$. Thus the $j$-th term on the right matches 
that on the left, and this completes the proof.
\end{proof}

\section{Partition functions and eigenvalues for rooted
forests} \label{section.chains}

For a certain class of posets, we are able to give an explicit
formula for the probability distribution for the promotion graph.
Note that this involves computing the partition function $Z_P$ (see
Remark~\ref{remark.constant}). We can also specify all eigenvalues and
their multiplicities of the transition matrix explicitly.

\subsection{Main results} \label{subsection.results}
Before we can state the main theorems of this section, we need to make a couple of definitions.
A \textbf{rooted tree} is a connected poset, where each node has at most one successor. 
Note that a rooted tree has a unique largest element.
A \textbf{rooted forest} is a union of rooted trees.
A \textbf{lower set} (resp. \textbf{upper set})
$S$ in a poset is a subset of the nodes such that if $x\in S$ and $y\preceq x$ (resp. $y\succeq x$), then also $y\in S$. 
We first give the formula for the partition function.

\begin{thm} \label{theorem.partition.function}
Let $P$ be a rooted forest of size $n$ 
and let $x_{\preceq i} = \sum_{j \preceq i} x_j$. 
The partition function for the promotion graph is
given by
\be \label{equation.Zp}
Z_P = \prod_{i=1}^n \frac{x_{\preceq i}}{x_1 + \cdots + x_i}.
\ee
\end{thm}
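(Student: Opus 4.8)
The plan is to turn the statement into one weighted summation identity over all linear extensions, and then to prove that identity by induction on $n$ by peeling off the top of one of the trees. By Theorem~\ref{theorem.promotion} the stationary weights, normalized so that $w(e)=1$, are $w(\pi)=\prod_{i=1}^n \frac{x_1+\cdots+x_i}{x_{\pi_1}+\cdots+x_{\pi_i}}$, and $Z_P$ is the constant for which $\{Z_P\,w(\pi)\}_{\pi\in\L(P)}$ is a probability distribution, i.e.\ $Z_P^{-1}=\sum_{\pi\in\L(P)} w(\pi)$. The factor $\prod_{i=1}^n(x_1+\cdots+x_i)$ is independent of $\pi$; substituting the claimed value of $Z_P$ and cancelling this common factor shows that the theorem is equivalent to
\begin{equation}\label{eq.Zstar}
\sum_{\pi\in\L(P)} \prod_{i=1}^n \frac{1}{x_{\pi_1}+\cdots+x_{\pi_i}} = \prod_{i=1}^n \frac{1}{x_{\preceq i}}.
\end{equation}
So it suffices to prove \eqref{eq.Zstar} for every rooted forest, and it is convenient to phrase the induction as a statement about an arbitrary rooted forest carrying one positive weight at each node.

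I would prove \eqref{eq.Zstar} by induction on $n$, the case $n=1$ being immediate. The $i=n$ factor on the left equals $1/(x_1+\cdots+x_n)$, which is independent of $\pi$, so I factor it out. Every $\pi\in\L(P)$ has $\pi_n$ equal to a maximal element of $P$, which in a rooted forest is precisely a root $r$; conversely, deleting the last letter is a bijection from $\{\pi\in\L(P):\pi_n=r\}$ onto $\L(P\setminus\{r\})$, and under it the truncated product $\prod_{i=1}^{n-1}\frac{1}{x_{\pi_1}+\cdots+x_{\pi_i}}$ is exactly the corresponding weight for $P\setminus\{r\}$. Since $P\setminus\{r\}$ is again a rooted forest, grouping the sum according to $r$ and invoking the induction hypothesis gives
\begin{equation*}
\sum_{\pi\in\L(P)} \prod_{i=1}^n \frac{1}{x_{\pi_1}+\cdots+x_{\pi_i}} = \frac{1}{x_1+\cdots+x_n}\sum_{r\text{ a root}}\ \prod_{i\in P\setminus\{r\}} \frac{1}{x_{\preceq i}},
\end{equation*}
where I use that deleting a maximal element leaves the down-set of every remaining node unchanged, so each $x_{\preceq i}$ with $i\neq r$ is the same whether computed in $P$ or in $P\setminus\{r\}$.

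The final — and only genuinely forest-specific — step is to evaluate this sum over roots. Writing $\prod_{i\in P\setminus\{r\}}\frac{1}{x_{\preceq i}} = x_{\preceq r}\prod_{i=1}^n \frac{1}{x_{\preceq i}}$, the right-hand side becomes $\frac{1}{x_1+\cdots+x_n}\big(\sum_{r\text{ a root}} x_{\preceq r}\big)\prod_{i=1}^n \frac{1}{x_{\preceq i}}$. The crux is the telescoping identity $\sum_{r\text{ a root}} x_{\preceq r}=x_1+\cdots+x_n$: in a rooted forest every node lies below a unique root, so the principal down-sets of the roots partition $P$, and summing $x_{\preceq r}=\sum_{j\preceq r}x_j$ over the roots recovers $\sum_{j=1}^n x_j$. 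This exactly cancels the prefactor $1/(x_1+\cdots+x_n)$ and leaves $\prod_{i=1}^n \frac{1}{x_{\preceq i}}$, completing the induction.

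I expect this telescoping to be the heart of the matter, together with the recognition that it is the uniqueness of the root above each node that makes it work: for a general poset a node can sit below several maximal elements, the down-sets of the maxima no longer partition $P$, and both the reduction and the identity break down. Everything else — the reduction to \eqref{eq.Zstar}, the bijection after deleting $\pi_n$, and the invariance of down-sets under removal of a maximal element — is routine bookkeeping.
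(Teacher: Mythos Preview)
Your proof is correct and follows essentially the same route as the paper: induction on $n$, splitting the sum over $\L(P)$ according to the last letter $\pi_n$, which must be the root of one of the trees, applying the induction hypothesis to $P\setminus\{r\}$, and then using the rooted-forest identity $\sum_{r} x_{\preceq r}=x_1+\cdots+x_n$ to collapse the remaining sum. The only cosmetic difference is that the paper works directly with the normalized weights $w'(\pi)=Z_P\,w(\pi)$ and shows they sum to~$1$, whereas you first clear the common factor $\prod_i(x_1+\cdots+x_i)$ and reduce to the equivalent identity~\eqref{eq.Zstar}; the inductive mechanics are identical.
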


\begin{proof}
We need to show that $w'(\pi) := Z_P \; w(\pi)$ with $w(\pi)$ given by~\eqref{formulaII} satisfies
$$\sum_{\pi \in \L(P)} w'(\pi) = 1.$$
We shall do so by induction on $n$. Assume that the formula is true for 
all rooted forests of size $n-1$. The main idea is that the last entry of $\pi$ in one-line notation has to be a maximal element 
of one of the trees in the poset. Let $P = T_1 \cup T_2 \cup \cdots \cup T_k$, where each $T_i$ is a tree. 
Moreover, let $\hat T_i$ denote the maximal element of $T_i$. Then
\begin{equation*}
\sum_{\pi \in \L(P)} w'(\pi) = \sum_{i=1}^k 
\sum_{\sigma \in \L(P \setminus \{\hat T_i\})} w'( \sigma \hat T_i )\;.
\end{equation*}
Using~\eqref{formulaII} and \eqref{equation.Zp}
\begin{equation*}
w'( \sigma  \hat T_i ) = w'(\sigma) 
\frac{x_{\preceq \hat T_i}}{x_1+\cdots+x_n},
\end{equation*}
which leads to
\begin{equation*}
\sum_{\pi \in \L(P)} w'(\pi) = \sum_{i=1}^k 
\frac{x_{\preceq \hat T_i}}{x_1+\cdots+x_n}
\sum_{\sigma \in \L(P \setminus \{\hat T_i\})} w'( \sigma ).
\end{equation*}
By the induction assumption, the rightmost sum is 1, and since each $x_j$ occurs in one and only one numerator of the sums over $i$,
an easy simplification leads to the desired result,
\end{proof}

Let $L$ be a finite poset with smallest element $\hat{0}$ and largest element $\hat{1}$.
Following~\cite[Appendix C]{brown.2000}, one may associate to each element $x\in L$ a \textbf{derangement number}
$d_x$ defined as
\begin{equation}
\label{equation.derangements}
	d_x = \sum_{y\succeq x} \mu(x,y) f([y,\hat{1}])\;,
\end{equation}
where $\mu(x,y)$ is the M\"obius function for the interval 
$[x,y] := \{z \in L \mid x\preceq z \preceq y\}$~\cite[Section 3.7]{stanenum}
and $f([y,\hat{1}])$ is the number of maximal chains in the interval $[y,\hat{1}]$.

A permutation is a \textbf{derangement} if it does not have any fixed
points.  A linear extension $\pi$ is called a \textbf{poset
  derangement} if it is a derangement when considered as a
permutation.  Let $\mathfrak d_P$ be the number of poset derangements
of the poset $P$. 

A \textbf{lattice} $L$ is a poset in which any two elements have a unique supremum (also called join) and 
a unique infimum (also called meet). For $x,y\in L$ the join is denoted by $x \vee y$, whereas the meet is
$x\wedge y$. For an \textbf{upper semi-lattice} we only require the existence of a unique supremum of
any two elements.

\begin{thm} \label{theorem.eigenvalues}
Let $P$ be a rooted forest of size $n$ and $M$ the transition matrix of
the promotion graph of Section~\ref{subsection.promotion}. Then
\begin{equation*}
	\det(M-\lambda \mathbbm{1}) = \prod_{\substack{ S \subseteq
            [n]\\ \text{$S$ upper set in $P$}}} (\lambda - x_S)^{d_S},
\end{equation*}
where $x_S = \sum_{i\in S} x_i$ and $d_S$ is the derangement number in the lattice $L$ (by inclusion) of upper
sets in $P$. In other words, for each subset $S\subseteq [n]$, which is an upper set in $P$, there is an eigenvalue
$x_S$ with multiplicity $d_S$.
\end{thm}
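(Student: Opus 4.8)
The plan is to realize $\overline{M}$ as the transition operator of a random walk on a finite monoid and then invoke the spectral theory for random walks on $\R$-trivial monoids of Steinberg~\cite{steinberg.2006,steinberg.2008}, which extends Brown's theory for left regular bands~\cite{brown.2000}; this is carried out in Section~\ref{section.R trivial}. The first step is a reindexing. On the promotion graph of Section~\ref{subsection.promotion} the edge $\pi\to\pi\partial_j$ carries weight $x_{\pi_j}$, so if I write $\psi_\ell$ for the operator sending $\pi$ to its extended promotion seeded at the label $\ell$ (equivalently $\pi\psi_\ell = \pi\partial_{\pi^{-1}_\ell}$), then the rate out of $\pi$ along the edge to $\pi\psi_\ell$ is exactly $x_\ell$. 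Inspecting the diagonal, where $M$ subtracts the total outgoing rate $x_1+\cdots+x_n$ to force column sums to vanish, shows that $\overline{M}=M+(x_1+\cdots+x_n)\mathbbm{1}=\sum_{\ell=1}^n x_\ell\,\psi_\ell$ acting on $\mathbb{C}\,\L(P)$. Thus $\overline{M}$ is precisely the weighted transition operator of the transformation monoid $\Monoid=\langle\psi_1,\dots,\psi_n\rangle$ on $\L(P)$, with generator $\psi_\ell$ selected at rate $x_\ell$.

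The heart of the argument, and the step I expect to be the main obstacle, is to prove that $\Monoid$ is $\R$-trivial when $P$ is a rooted forest. To each $m\in\Monoid$ I would attach its support $\supp(m)\subseteq[n]$, the set of labels moved by $m$, and show that $\supp$ is a monoid homomorphism onto the lattice $L$ of upper sets of $P$ (ordered by inclusion, with join given by union), with $\supp(\psi_\ell)$ equal to the principal upper set $\{k : k\succeq\ell\}$ generated by $\ell$ and $\supp(mm')=\supp(m)\cup\supp(m')$. The rooted-forest hypothesis is genuinely needed here: because every node has at most one successor the principal upper sets are chains, and one can show that once the labels of an upper set have been promoted their induced relative order becomes stable under any further $\psi_{\ell'}$. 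Making this stabilization precise, by induction on $|\supp(m)|$, yields a canonical form for the elements of $\Monoid$ and shows that the strongly connected components of the right Cayley graph of $\Monoid$ are singletons, one of the standard characterizations of $\R$-triviality; I would then quote Steinberg's criterion to conclude. For general posets the analogous monoid need not be $\R$-trivial, which is why the clean eigenvalue formula is restricted to rooted forests.

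With $\R$-triviality established, Steinberg's theorem computes the spectrum of $\overline{M}=\sum_\ell x_\ell\,\psi_\ell$ from the support lattice $L$. The eigenvalues are indexed by the upper sets $S$ of $P$, the eigenvalue attached to $S$ being the sum of the weights of the generators whose support lies below $S$, namely $\sum_{\ell:\langle\ell\rangle\subseteq S} x_\ell$; since $S$ is an upper set, $\langle\ell\rangle\subseteq S$ is equivalent to $\ell\in S$, so this sum is exactly $x_S=\sum_{i\in S} x_i$. The corresponding multiplicity is produced by Steinberg's formula as a M\"obius-weighted count over the interval $[S,\hat 1]$ of $L$; identifying $L$ with the lattice of upper sets, $\hat 1$ with $[n]$, and this count with the derangement number $d_S$ of~\eqref{equation.derangements} via M\"obius inversion, exactly as in~\cite[Appendix C]{brown.2000}, gives multiplicity $d_S$. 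Collecting these eigenvalues and multiplicities yields the factorization
\[
\det(\overline{M}-\lambda\mathbbm{1})=\prod_{\substack{S\subseteq[n]\\ S\text{ upper set in }P}}(\lambda-x_S)^{d_S},
\]
which is the claimed formula.
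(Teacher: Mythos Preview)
Your overall plan matches the paper's: rewrite $\overline{M}=\sum_\ell x_\ell\,\prom_\ell$ as a monoid random walk, prove the monoid generated by the $\prom_\ell$ is $\R$-trivial, and then invoke Steinberg's extension of Brown's theorem. That is exactly the architecture of Section~\ref{section.R trivial}. The difficulty is in the step you flag as ``the main obstacle,'' and your sketch of it has two genuine gaps.

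First, your proposed support map is not well-defined. You set $\supp(m)$ to be ``the set of labels moved by $m$'' and claim $\supp(\prom_\ell)$ is the principal upper set of $\ell$, with $\supp$ a monoid morphism onto the lattice $L$ of upper sets. But take the poset of Example~\ref{example.r trivial monoid} ($2$ covers $1$, and $3$ is isolated). One checks directly that $\prom_1$ and $\prom_3$ each collapse $\L(P)$ to a single linear extension, and in fact $\prom_3\prom_1=\prom_1$ as operators. Under your recipe the left side would have support $\{1,2,3\}$ and the right side $\{1,2\}$, so the map cannot be a well-defined monoid morphism. The paper avoids this by defining $\supp(x)=\Rfactor(x^\omega)$ via the maximal common right factor of the image set (Lemma~\ref{claim.idempotent} and Remark~\ref{remark.lattice M}); with that definition $\supp(\prom_3)=\{1,2,3\}$, not $\{3\}$.

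Second, and relatedly, the correct support map does \emph{not} surject onto all of $L$: its image is the smaller semi-lattice $\LM$ of Remark~\ref{remark.lattice M} (compare the two lattices in Figure~\ref{figure.lattice}). Steinberg's theorem then gives eigenvalues indexed by $\LM$, not by $L$, and your computation ``$\supp(\prom_\ell)\subseteq S\iff \ell\in S$'' fails for $S\in L\setminus\LM$. The paper closes this gap in Section~\ref{section.proof} by showing that any $S\in L\setminus\LM$ has a unique successor $S'$ in $L$, whence $f([S,\hat 1])=f([S',\hat 1])$ and the derangement number $d_S$ of~\eqref{equation.derangements} vanishes; thus one may harmlessly enlarge the index set from $\LM$ to $L$. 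Without this argument the passage from Steinberg's formula (indexed by $\LM$) to the statement of Theorem~\ref{theorem.eigenvalues} (indexed by all upper sets) is incomplete.
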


The proof of Theorem~\ref{theorem.eigenvalues} will be given in Section~\ref{section.R trivial}.
As we will see in Lemma~\ref{lemma.action of del}, the action of the operators in the promotion
graph of Section~\ref{subsection.promotion} for rooted forests have a Tsetlin library type interpretation 
of moving books to the end of a stack (up to reordering).

When $P$ is a union of chains, which is a special case of rooted forests, we can express the eigenvalue
multiplicities directly in terms of the number of poset derangements.

\begin{thm} \label{theorem.poset derangements}
Let $P = [n_1] + [n_2] + \cdots + [n_k]$ be a union of chains of size $n$ whose
elements are labeled consecutively within chains. Then
\begin{equation*}
	\det(M-\lambda \mathbbm{1}) = \prod_{\substack{ S \subseteq [n]\\ \text{$S$ upper set in $P$}}}
	(\lambda - x_S)^{\mathfrak d_{P \setminus S}},
\end{equation*}
where $\mathfrak{d}_\emptyset =1$.
\end{thm}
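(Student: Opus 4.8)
The plan is to deduce the statement from Theorem~\ref{theorem.eigenvalues} and Remark~\ref{remark.lower}, which already describe the eigenvalues of $M$ for any rooted forest; the only genuinely new content is a combinatorial identity rewriting the abstract derangement multiplicities as numbers of poset derangements when $P$ is a union of chains. First I would pass from $\overline{M} = M + (x_1 + \cdots + x_n)\mathbbm{1}$ back to $M$ by replacing $\lambda$ with $\lambda + (x_1 + \cdots + x_n)$ in Theorem~\ref{theorem.eigenvalues}, and complement each upper set $S$ to the lower set $P\setminus S$, using $x_{[n]} - x_S = x_{P\setminus S}$. As recorded in Remark~\ref{remark.lower}, this exhibits $\det(M - \lambda\mathbbm{1})$ as a product over lower sets $S$ of linear factors in $\lambda$ and $x_S$ with exponent $d_{P\setminus S}$, where $d$ denotes the derangement number in the lattice of upper sets. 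The unique lower set $S = \emptyset$ contributes the factor $\lambda$, since $d_P = d_{\hat 1} = \mu(\hat 1,\hat 1)\, f([\hat 1,\hat 1]) = 1$. Hence the theorem reduces to the claim that
\[
d_{P\setminus S} = \mathfrak d_S \qquad \text{for every nonempty lower set } S,
\]
where $\mathfrak d_S$ is the number of poset derangements of the induced subposet $S$.

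To analyze $d_{P\setminus S}$ I would identify the two ingredients of~\eqref{equation.derangements} in the lattice $L$ of upper sets of $P$. A maximal chain from an upper set $y$ to $\hat 1 = P$ adjoins the elements of $P\setminus y$ one at a time while remaining an upper set, so $f([y,\hat 1])$ equals the number $e(P\setminus y)$ of linear extensions of the complementary lower set. Moreover, for a union of chains the lattice of upper sets is a direct product of chains (one factor per chain of $P$), so its M\"obius function is a product of chain M\"obius functions and is therefore $(-1)^{|y\setminus x|}$ when $y\setminus x$ meets every chain in at most one element and $0$ otherwise. Complementing $y$ to $T = P\setminus y$ and writing $x = P\setminus S$, these two facts turn~\eqref{equation.derangements} into the alternating sum
\[
d_{P\setminus S} = \sum_{A\subseteq \max(S)} (-1)^{|A|}\, e(S\setminus A),
\]
where $\max(S)$ is the set of maximal (chain-top) elements of $S$; the restriction to chain tops is exactly the support of the product M\"obius function.

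It then remains to prove the purely combinatorial identity $\sum_{A\subseteq\max(S)} (-1)^{|A|} e(S\setminus A) = \mathfrak d_S$, and this is the step I expect to be the main obstacle. The $A = \emptyset$ term gives all of $\L(S)$, so equivalently one must show that the inclusion--exclusion over the maximal elements removes precisely the linear extensions of $S$ possessing a fixed point. I would attempt this by a sign-reversing involution on the pairs $(A,\tau)$ with $A\subseteq\max(S)$ and $\tau\in\L(S\setminus A)$, whose fixed points are the derangements of $S$ (paired with $A=\emptyset$); alternatively by induction on $|S|$, peeling off the maximal element of largest label and matching the recursion for $e$ against that for $\mathfrak d_S$. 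The delicate point is that, unlike the free (antichain) case where fixed points may be prescribed independently, a fixed point here need not be a maximal element, so the correspondence between removing chain tops and destroying fixed points is global rather than term-by-term; making this bookkeeping precise is the crux. Once the identity is established, substituting it into the product of the first paragraph yields the stated formula for $\det(M - \lambda\mathbbm{1})$.
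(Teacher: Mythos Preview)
Your reduction is correct and matches the paper's: pass from $\overline{M}$ to $M$, complement upper sets to lower sets, note $d_{\hat 1}=1$ gives the isolated factor $\lambda$, and reduce everything to the identity $d_{P\setminus S}=\mathfrak d_S$ for nonempty lower sets $S$. Your computation of the M\"obius function on the product-of-chains lattice and the resulting formula
\[
d_{P\setminus S}=\sum_{A\subseteq\max(S)}(-1)^{|A|}\,e(S\setminus A)
\]
are exactly what the paper uses.

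Where you diverge is in the final combinatorial step. You propose to attack $\sum_{A}(-1)^{|A|}e(S\setminus A)=\mathfrak d_S$ by a sign-reversing involution or induction, and correctly flag this as the crux. The paper instead proves a bijective lemma: for $A$ the set of chain tops indexed by $I$, it exhibits an explicit bijection between $\L(S\setminus A)$ and the set of linear extensions of $S$ that fix \emph{at least one} element of the $i$th chain for every $i\in I$. Once that is in hand, the desired identity is literally inclusion--exclusion over the events ``has a fixed point in chain $i$''. The bijection works by encoding linear extensions of a union of chains as words in an alphabet $\{e_1,\dots,e_k\}$ and, for each $i\in I$, deleting (respectively inserting) a single letter $e_i$ at the position of the last fixed point in chain $i$; the point you worried about, that a fixed point need not be maximal, is handled because the set of fixed points in a given chain is automatically an interval, so ``last fixed point'' is well-defined and removing that letter drops the chain length by one without disturbing the word elsewhere.

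So your outline is sound; the missing ingredient is precisely this bijection (or an equivalent involution), and the paper supplies it rather than the inductive or involutive argument you sketched.
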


The proof of Theorem~\ref{theorem.poset derangements} is given in Section~\ref{subsection.derangement proof}.

\begin{cor}
For $P$ a union of chains, we have the identity
\be
|\L(P)| 
= \sum_{\substack{ S \subseteq [n]\\ \text{$S$ upper set in $P$}}} d_S
= \sum_{\substack{ S \subseteq [n]\\ \text{$S$ lower set in $P$}}} \mathfrak d_S \;.
\ee
\end{cor}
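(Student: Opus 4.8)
The plan is to derive both equalities purely by reading off degrees in $\lambda$ from the characteristic-polynomial factorizations already established in Theorems~\ref{theorem.eigenvalues} and~\ref{theorem.poset derangements}; no fresh combinatorial input is required, since a union of chains is a special rooted forest and both theorems therefore apply to $P$.

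First I would observe that the transition matrix $M$ of the promotion graph is indexed by $\L(P)$, hence is a square matrix of size $|\L(P)|$, and that the shift $\overline{M} = M + (x_1 + \cdots + x_n)\mathbbm{1}$ has the same size. Consequently $\det(\overline{M} - \lambda\mathbbm{1})$ is a polynomial in $\lambda$ of degree exactly $|\L(P)|$. On the other side of Theorem~\ref{theorem.eigenvalues}, the product $\prod_{S}(\lambda - x_S)^{d_S}$ over upper sets $S$ has degree $\sum_{S} d_S$ in $\lambda$. Equating the two degrees yields the first equality $|\L(P)| = \sum_{S \text{ upper set}} d_S$.

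For the second equality I would run the identical argument on Theorem~\ref{theorem.poset derangements}. There $\det(M - \lambda\mathbbm{1})$ is again the characteristic polynomial of an $|\L(P)| \times |\L(P)|$ matrix, so it has degree $|\L(P)|$, while the right-hand side $\lambda \prod_{\emptyset \subsetneq S}(\lambda - x_S)^{\mathfrak d_S}$ over nonempty lower sets has degree $1 + \sum_{\emptyset \subsetneq S} \mathfrak d_S$, the isolated factor $\lambda$ contributing the summand $1$. Equating degrees gives $|\L(P)| = 1 + \sum_{\emptyset \subsetneq S \text{ lower set}} \mathfrak d_S$, closing the chain of equalities.

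There is essentially no obstacle here beyond bookkeeping: the one point worth stating carefully is that $M$ and $\overline{M}$ are genuine square matrices indexed by $\L(P)$, so the degree of each characteristic polynomial is forced to be $|\L(P)|$ and not something smaller. In particular one need not track leading coefficients or signs, since only the degree is being compared. I would therefore present the corollary as an immediate consequence of the two preceding theorems, remarking that it thereby supplies two independent combinatorial decompositions of the number of linear extensions of a union of chains.
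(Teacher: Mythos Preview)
Your argument is correct and is exactly the intended one: the paper states this corollary without proof, immediately after Theorems~\ref{theorem.eigenvalues} and~\ref{theorem.poset derangements}, and the degree comparison you outline is precisely the implicit justification. There is nothing to add.
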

 
Note that the antichain is a special case of a rooted forest and in particular a union of chains. 
In this case the Markov chain is the Tsetlin library and all subsets of $[n]$ are upper (and lower) sets. Hence 
Theorem~\ref{theorem.eigenvalues} specializes to the results of Donnelly~\cite{donnelly.1991}, Kapoor and 
Reingold~\cite{kapoor_reingold.1991}, and Phatarford~\cite{phatarfod.1991} for the Tsetlin library.

The case of unions of chains, which are consecutively labeled, can be interpreted as looking at a parabolic 
subgroup of $S_n$. If there are $k$ chains of lengths $n_i$ for $1\le i \le k$, then the parabolic subgroup is
$S_{n_1} \times \cdots \times S_{n_k}$. In the realm of the Tsetlin library, there are $n_i$ books 
of the same color. The Markov chain consists of taking a book at random and placing it at the end of the stack.

\subsection{Proof of Theorem~\ref{theorem.poset derangements}}
\label{subsection.derangement proof}

We deduce Theorem~\ref{theorem.poset derangements} from
Theorem~\ref{theorem.eigenvalues} by which the matrix $M$ has
eigenvalues indexed by upper sets $S$ with multiplicity $d_{S}$. We
need to show that $\mathfrak{d}_{P \setminus S} = d_{S}$.

Let $P$ be a union of chains and $L$ the lattice of upper sets of $P$. The M\"obius function of $P$ is the 
product of the M\"obius functions of each chain. This implies that the only upper sets of $P$ with a nonzero entry 
of the M\"obius function are the ones with unions of the top element in each chain.

Since upper sets of unions of chains are again unions of chains, it suffices to consider $d_\emptyset$ for $P$
as $d_S$ can be viewed as $d_\emptyset$ for $P \setminus S$. By~\eqref{equation.derangements} we have
\[
	d_\emptyset = \sum_S \mu(\emptyset, S) f([S,\hat{1}])\; ,
\]
where the sum is over all upper sets of $P$ containing only top elements in each chain.
Recall that $f([S,\hat{1}])$ is the number of chains from $S$ to $\hat{1}$ in $L$.
By inclusion-exclusion, the claim that $d_\emptyset = \mathfrak{d}_P$ is the number of poset derangements
of $P$, that is the number of linear extensions of $P$ without fixed points, follows from the next
lemma.

\begin{lem}\label{chains-derangement}
Let $P = [n_1] + [n_2] + \cdots + [n_k]$.  Fix $I \subseteq [k]$ and let $S \subseteq P$ be the upper set 
containing the top element of the $i$th chain of $P$ for all $i \in I$.  Then $f([S,\hat{1}])$ is equal to the 
number of linear extensions of $P$ that fix at least one element of the $i$th chain of $P$ for all $i \in I$. 
\end{lem}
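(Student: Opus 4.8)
The plan is to evaluate both sides as the same multinomial coefficient and then to recognize the equality of the two counts through a deletion/insertion bijection. First I would compute the left-hand side. The lattice $L$ of upper sets of a union of chains, ordered by inclusion, is distributive, and an upper set is recorded by a vector $(a_1,\dots,a_k)$ with $0\le a_i\le n_i$, where $a_i$ is the number of top elements taken from the $i$th chain. For the set $S$ in the statement one has $a_i=1$ when $i\in I$ and $a_i=0$ otherwise, while $\hat 1=(n_1,\dots,n_k)$. Hence the interval $[S,\hat 1]$ is isomorphic as a poset to the product of chains $\prod_{i=1}^k[a_i,n_i]$, whose $i$th factor has length $\ell_i:=n_i-a_i$, equal to $n_i-1$ for $i\in I$ and to $n_i$ otherwise. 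Since the number of maximal chains in a product of chains is the multinomial coefficient,
\[
f([S,\hat 1])=\binom{\ell_1+\cdots+\ell_k}{\ell_1,\dots,\ell_k}=\frac{(n-|I|)!}{\prod_{i\in I}(n_i-1)!\,\prod_{i\notin I}n_i!}.
\]
Equivalently, a maximal chain fills in the bottom $\ell_i$ elements of each chain from the top down, so $f([S,\hat 1])=|\L(P\setminus S)|$, where $P\setminus S$ is the union of chains of sizes $\ell_i$.

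It therefore remains to show that the number $g(I)$ of linear extensions of $P$ having at least one fixed point in the $i$th chain for every $i\in I$ equals this same coefficient. I would reformulate a linear extension of $P$ as a word in which position $p$ is assigned the index of the chain containing $\pi_p$; writing $m_i=n_1+\cdots+n_{i-1}$, a fixed point $\pi_p=p$ occurs exactly when the prefix of length $p$ contains precisely $m_i$ entries from chains other than $i$. A structural fact I would record first is that the heights at which the $i$th chain is fixed form a contiguous interval: if the $r$th and $r'$th elements of chain $i$ are fixed with $r<r'$, then the number of non-$i$ entries in the prefix equals the constant $m_i$ across the whole range, forcing every intermediate element of chain $i$ to be fixed as well. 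In particular, when chain $i$ is fixed there is a well-defined topmost fixed element.

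The bijection I propose sends a linear extension counted by $g(I)$ to the shuffle of the reduced chains obtained by deleting, for each $i\in I$, the topmost fixed element of chain $i$ and relabelling; the inverse reinserts a new maximal element into each chain $i\in I$ at the unique spot that makes it the topmost fixed element. Deleting the top of a chain from a shuffle always yields a shuffle of the reduced chains, so the forward map lands in $\L(P\setminus S)$, and once it is shown to be a bijection we obtain $g(I)=|\L(P\setminus S)|=f([S,\hat 1])$, which is the assertion.

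The hard part will be proving that this map is well defined and invertible when $|I|>1$: deleting an element shifts all later positions and alters the non-$i$ counts governing the fixed-point condition of the other chains, so the several deletions interact. I would resolve this either by performing the deletions in a fixed order, say decreasing chain index, so that $m_i$, which only sees smaller chains, stays undisturbed, and then checking that the operations commute with the contiguous-interval structure; or by constructing the correspondence directly against the maximal-chain (order-ideal) description of $f([S,\hat 1])$, where each added element corresponds to reinstating one fixed point. This identification is exactly what is needed so that, upon feeding it through $\mu(\emptyset,S)=(-1)^{|I|}$ and inclusion--exclusion, the surrounding argument yields $d_\emptyset=\mathfrak d_P$.
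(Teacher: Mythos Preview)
Your proposal is correct and follows essentially the same route as the paper: both encode linear extensions as words in the chain-index alphabet, observe that the fixed points in each chain form a contiguous interval, identify $f([S,\hat 1])$ with $|\L(P\setminus S)|$, and set up a bijection that removes, for each $i\in I$, the letter at the position of the topmost fixed element of chain $i$. The paper resolves the interaction issue you flag by constructing the inverse directly on the shorter word: with shifted offsets $N'_i=\sum_{j<i}n'_j$ (where $n'_j=n_j-1$ if $j\in I$ and $n_j$ otherwise), one locates in $w'$ the interval of indices $N'_i+j$ whose length-$(N'_i+j)$ prefix contains exactly $j$ copies of $e_i$, and reinserts $e_i$ immediately after the rightmost such position for every $i\in I$ simultaneously; no ordering of the deletions or insertions is needed.
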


\begin{proof}
Let $n = n_1 + n_2 + \cdots + n_k$ denote the number of elements in $P$.  Let $N_1 = 0$ and define 
$N_i = n_1 + \cdots + n_{i-1}$ for all $2 \leq i \leq k$.  We label the elements of $P$ consecutively so that 
$N_i + 1, N_i + 2, \ldots, N_{i+1}$ label the elements of the $i$th chain of $P$ for all $1 \leq i \leq k$. 

The linear extensions of $P$ are in bijection with words $w$ of length $n$ in the alphabet 
$\mathcal{E}:= \{e_1, e_2, \ldots, e_k\}$ with $n_i$ instances of each letter $e_i$.  Indeed, given a linear 
extension $\pi$ of $P$, we associate such a word $w$ to $\pi$ by setting $w_j = e_i$ if 
$\pi_j \in \{N_i + 1, \ldots, N_{i+1}\}$; i.e. if $j$ lies in the $i$th column of $P$ under the extension $\pi$.  
For the remainder of the proof, we will identify a linear extension $\pi$ (and properties of $\pi$) with its 
corresponding word $w$. We also view $e_i$ as standard basis vectors in $\mathbb{Z}^k$.

For any $1 \leq i \leq k$ and $1 \leq j \leq n_i$, the element $N_i+j$ is fixed by $w$ if and only if $w$ 
satisfies the following two conditions: 
\begin{itemize}
\item $w_{N_i+j} = e_i$ (i.e. $w$ sends $N_i+j$ to the $i$th column of $P$) and 
\item the restriction of $w$ to its first $N_i+j$ letters, which we denote $w|_{[1,\ldots,N_i+j]}$, contains 
exactly $j$ instances of the letter $e_i$ (i.e. $N_i+j$ is the $j$th element of the $i$th column of $P$ under the extension $w$).
\end{itemize}

Moreover, it is clear that the set of all $j\in \{1,\ldots,n_i\}$ such that $w$ fixes $N_i+j$ is an interval of the form $[a_i,b_i]$.  

With $I$ and $S$ defined as in the statement of the Lemma, let 
\begin{displaymath}
n'_i:=
\begin{cases}
n_i -1 & \text{ if } i \in I, \\
n_i & \text{ if } i \notin I.
\end{cases}
\end{displaymath}
Similarly, define $N'_1 = 0$ and $N'_i = n'_1 + \cdots + n'_{i-1}$ for $i \geq 2$.  We see that $f([S,\hat{1}])$ 
counts the number of words of length $n-|I|$ in the alphabet $\mathcal{E}$ with $n'_j$ instances of each letter $e_j$.  
This is because $S$ corresponds to the element $\delta_I$ defined by
\begin{displaymath}
\delta_I(i) = 
\begin{cases}
1 & \text{ if } i \in I, \\
0 & \text{ if } i \notin I,
\end{cases}
\end{displaymath}
of $L$.  The maximal chains in $L$ from $\delta_I$ to $(n_1,n_2,\ldots,n_k)$ are lattice paths in $\mathbb{Z}^k$ 
with steps in the directions of the standard basis vectors $e_1, e_2, \ldots, e_k$.  

Having established this notation, we are ready to prove the main statement of the Lemma.  Let $\mathcal{W}$ 
denote the collection of all words in the alphabet $\mathcal{E}$ of length $n$ with $n_j$ instances of each 
letter $e_j$ that fix an element of the $i$th chain of $P$ for all $i \in I$.  Let $\mathcal{W}'$ denote the collection 
of all words of length $n-|I|$ in the alphabet $\mathcal{E}$ with $n'_j$ instances of each letter $e_j$.  

We define a bijection $\varphi: \mathcal{W} \rightarrow \mathcal{W}'$ as follows.  For each $i \in I$, suppose 
$w \in \mathcal{W}$ fixes the elements $N_i + a_i, \ldots, N_i + b_i$ from the $i$th chain of $P$.  We define 
$\varphi(w)$ to be the word obtained from $w$ by removing the letter $e_i$ in position $w_{N_i + b_i}$ for 
each $i \in I$.  Clearly $\varphi(w)$ has length  $n-|I|$ and $n'_j$ instances of each letter $e_j$.  

Conversely, given $w' \in \mathcal{W}'$, let $J_i$ be the set of indices $N'_i+j$ with $0 \leq j \leq n'_i$  such 
that $w'|_{[1,\ldots,N'_i+j]}$ contains exactly $j$ instances of the letter $e_i$.  Here we allow  $j=0$ since it 
is possible that there are no instances of the letter $e_i$ among the first $N'_i$ letters  of $w'$.  Again, it is 
clear that each $J_i$ is an interval of the form $[N'_i + c_i, \ldots, N'_i+d_i]$ and  $w'_{N_i+j} = e_i$ for all 
$j \in [c_i+1,\ldots,d_i]$, though it is possible that $w'_{N'_i+c_i} \neq e_i$.   Thus we define $\varphi^{-1}(w')$ 
to be the word obtained from $w'$ by inserting the letter $e_i$  after $w'_{N'_i+d_i}$ for all $i \in I$.  
\end{proof}

We illustrate the proof of Lemma \ref{chains-derangement} in the following example.

\begin{eg}
Let $P = [3]+[4]+[2]+[5]$, $I = \{2,4\}$, and consider the linear extension 
\[
	\pi:=1\; 10\; 4\; 8\; \mathbf{5}\; \mathbf{6}\; 2\; 3\; 11\; 9\; 7\; \mathbf{12}\; \mathbf{13}\; \mathbf{14},
\]
which corresponds to the word 
$$w = e_1 e_4 e_2 | e_3 \mathbf{e_2} \mathbf{e_2} e_1 | e_1 e_4 | e_3 e_2 \mathbf{e_4} \mathbf{e_4} \mathbf{e_4}.$$  
Here we have divided the word according to the chains of $P$.  The fixed points of $\pi$ in the second and fourth chains 
of $P$ are shown in bold, along with their corresponding entries of the word $w$.  In this case 
$\varphi(w) = e_1 e_4 e_2 e_3 e_2 e_1 e_1 e_4 e_3 e_2 e_4 e_4$. 

Conversely, consider $$w' = e_2 e_1 e_4 | e_3 e_3 e_1 | e_2 e_1 | e_2 e_4 e_4 e_4 \in \mathcal{W}'.$$  Again, we 
have partitioned $w'$ into blocks of size $n'_i$ for each $i = 1, \ldots, 4.$  In this case, $J_2 = \{4\}$ and $J_4 = \{10,11,12\}$, 
so $\varphi^{-1}(w')$ is the following word, with the inserted letters shown in bold: 
$$\varphi^{-1}(w') = e_1 e_1 e_4 | e_3 \mathbf{e_2} e_1 e_3 | e_2 e_1 | e_2 e_4 e_4 e_4 \mathbf{e_4}.$$
\end{eg}

\begin{rem}
The initial labeling of $P$ in the proof of Lemma \ref{chains-derangement} is essential to the proof.  For example, let 
$P$ be the poset $[2] + [2]$ with two chains, each of length two. Labeling the elements of $P$ so that $1<2$ and $3<4$
admits two derangements: $3142$ and $3412$.  On the other hand, labeling the elements of $P$ so that $1<4$ and $2<3$ 
only admits one derangement: $2143$.  In either case, the eigenvalue $0$ of $M$ has 
multiplicity  $2$.
\end{rem}

\section{$\R$-trivial monoids}
\label{section.R trivial}

In this section we provide the proof of Theorem~\ref{theorem.eigenvalues}. We first note that in the case of rooted forests
the monoid generated by the relabeled promotion operators of the promotion graph is $\R$-trivial
(see Sections~\ref{subsection.R trivial} and~\ref{section.R trivial promotion}). Then we use a
generalization of Brown's theory~\cite{brown.2000} for Markov chains associated to left regular bands 
(see also~\cite{bidigare.1997,bidigare_hanlon_rockmore.1999}) to $\R$-trivial monoids.
This is in fact a special case of Steinberg's results~\cite[Theorems 6.3 and 6.4]{steinberg.2006} for monoids in the
pseudovariety $\mathbf{DA}$ as stated in Section~\ref{section.brown}. The proof of Theorem~\ref{theorem.eigenvalues}
is given in Section~\ref{section.proof}.

\subsection{$\R$-trivial monoids}
\label{subsection.R trivial}

A finite \textbf{monoid} $\Monoid$ is a finite set with an associative multiplication and an identity element.
Green~\cite{green.1951} defined several preorders on $\Monoid$. 
In particular for $x,y\in \Monoid$ right and left order is defined as
\begin{equation}
\begin{split}
	x \le_{\R} y & \quad \text{if $y=xu$ for some $u\in \Monoid$,}\\
	x \le_{\LL} y & \quad \text{if $y=ux$ for some $u\in \Monoid$.}
\end{split}	
\end{equation}
(Note that this is in fact the opposite convention used by Green).
This ordering gives rise to equivalence classes ($\R$-classes or $\LL$-classes) 
\begin{equation*}
\begin{split}
	x \; \R \; y &\quad \text{if and only if $x\Monoid = y\Monoid$,}\\
	x \; \LL \; y &\quad \text{if and only if $\Monoid x = \Monoid y$.}
\end{split}
\end{equation*}
The monoid $\Monoid$ is said to be \textbf{$\R$-trivial} (resp. \textbf{$\LL$-trivial}) if all $\R$-classes (resp. $\LL$-classes)
have cardinality one.

\begin{rem}
A monoid $\Monoid$ is a left regular band if $x^2=x$ and $xyx=xy$ for all $x,y\in \Monoid$.
It is not hard to check (see also~\cite[Example 2.4]{berg_bergeron_bhargava_saliola.2011}) that
left regular bands are $\R$-trivial.
\end{rem}

Schocker~\cite{schocker.2008} introduced the notion of weakly ordered monoids which is
equivalent to the notion of $\R$-triviality~\cite[Theorem 2.18]{berg_bergeron_bhargava_saliola.2011} 
(the proof of which is based on ideas by Steinberg and Thi\'ery).

\begin{defn} \label{definition.weakly}
A finite monoid $\Monoid$ is  said to be \textbf{weakly ordered} if there is a finite upper semi-lattice $(\LM,\preceq)$
together with two maps $\supp, \des:\Monoid \to \LM$ satisfying the following axioms:
\begin{enumerate}
\item $\supp$ is a surjective monoid morphism, that is, $\supp(xy) = \supp(x) \vee \supp(y)$ for all $x,y \in \Monoid$
and $\supp(\Monoid)=\LM$.
\item If $x,y \in \Monoid$ are such that $xy\le_{\R} x$, then $\supp(y) \preceq \des(x)$.
\item If $x,y \in \Monoid$ are such that $\supp(y) \preceq \des(x)$, then $xy=x$.
\end{enumerate}
\end{defn}

\begin{thm}\cite[Theorem 2.18]{berg_bergeron_bhargava_saliola.2011}
\label{theorem.Rtrivial weakly}
Let $\Monoid$ be a finite monoid. Then $\Monoid$ is weakly ordered if and only if $\Monoid$ is $\R$-trivial.
\end{thm}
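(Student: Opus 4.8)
The plan is to prove the two implications separately, after first recording a simplification of the axioms that is available once $\Monoid$ is $\R$-trivial. Observe that in an $\R$-trivial monoid the hypothesis ``$xy \le_{\R} x$'' of axiom~(2) is equivalent to the equality $xy = x$: one always has $x \le_{\R} xy$ (since $xy = x\cdot y$), so $xy \le_{\R} x$ forces $x\,\R\,xy$, whence $x = xy$ by triviality; the converse is immediate. Consequently, for an $\R$-trivial monoid, axioms~(2) and~(3) together assert exactly that, for each fixed $x$, the set $\{\,y \mid \supp(y) \preceq \des(x)\,\}$ coincides with the right stabilizer $\operatorname{Stab}(x) := \{\,y \mid xy = x\,\}$. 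I would keep this reformulation in view throughout the harder direction.

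For the implication \emph{weakly ordered $\Rightarrow$ $\R$-trivial}, suppose $x\,\R\,y$, so $x\Monoid = y\Monoid$ and hence $y = xu$, $x = yv$ for some $u,v \in \Monoid$. Then $x = yv = (xu)v$, which exhibits $xu \le_{\R} x$. Axiom~(2) applied to the pair $(x,u)$ gives $\supp(u) \preceq \des(x)$, and axiom~(3) then yields $xu = x$, i.e.\ $y = x$. Thus every $\R$-class is a singleton. This direction is short and uses only the three axioms as stated, not the reformulation.

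The substantive implication is \emph{$\R$-trivial $\Rightarrow$ weakly ordered}, where the semilattice and the two maps must be produced. For $\LM$ I would take the maximal semilattice quotient of $\Monoid$ with $\supp \colon \Monoid \to \LM$ the canonical surjection; this choice is essentially forced, since axiom~(1) requires $\supp$ to be a morphism onto a semilattice and every such morphism factors through the maximal semilattice quotient, whereas passing to a coarser quotient only enlarges each down-set $\{y\mid \supp(y)\preceq\des(x)\}$ and so makes axiom~(3) harder. Axiom~(1) then holds by construction. I would define
\[
	\des(x) = \bigvee_{\,y \in \operatorname{Stab}(x)} \supp(y),
\]
the join taken in $\LM$. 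Since $\operatorname{Stab}(x)$ is a submonoid, the product $s$ of all its elements again lies in $\operatorname{Stab}(x)$ and satisfies $xs = x$ and $\supp(s) = \des(x)$. With this definition axiom~(2) is immediate (each $\supp(y)$ with $xy=x$ is one of the joinands), i.e.\ the containment $\operatorname{Stab}(x) \subseteq \{y \mid \supp(y) \preceq \des(x)\}$ comes for free.

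The whole difficulty is therefore concentrated in axiom~(3), the reverse containment: if $\supp(u) \preceq \des(x)$ then $xu = x$. Here I would first replace $s$ by its idempotent power $e := s^{\omega}$, available by finiteness, which still satisfies $xe = x$ and $\supp(e) = \des(x)$. Given $\supp(u) \preceq \supp(e)$, so that $\supp(eu) = \supp(e)$, and using $x e u = (xe)u = xu$, the goal reduces to showing $xeu = x$. To reach this I would analyse the idempotent $(eu)^{\omega}$, which again has support $\supp(e)$, exploit that the descending chain of right ideals $x(eu)^{k}\Monoid$ stabilizes and hence (by $\R$-triviality) that $x(eu)^{k}$ is eventually constant, and invoke the standard identity $(ab)^{\omega}a = (ab)^{\omega}$ characterising $\R$-triviality to compare $x$, $xe$, and $xeu$. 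Establishing this ``support-saturation'' property—that having support below $\des(x)$ already forces right-fixation of $x$, so that eventual stabilization can be pushed back to the first step—is the step I expect to be the main obstacle, and it is the crux on which the entire theorem rests. Once it is in place, all three axioms are verified and $\Monoid$ is weakly ordered, completing the equivalence.
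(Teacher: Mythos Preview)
The paper does not supply its own proof of this theorem: it is quoted as \cite[Theorem~2.18]{berg_bergeron_bhargava_saliola.2011} and left without argument, so there is no in-paper proof to compare your proposal against. The paper does, however, record in Remark~\ref{remark.lattice} a specific construction of $(\LM,\supp,\des)$ for an $\R$-trivial monoid---namely $\LM$ as the set of left ideals $\Monoid e$ for idempotent $e$, ordered by reverse inclusion, with $\supp(x)=\Monoid x^{\omega}$ and $\des(x)=\supp(e)$ for $e$ maximal in $\{y\mid xy=x\}$ under $\le_{\R}$---which is more concrete than, though ultimately equivalent to, your choice of the maximal semilattice quotient.

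As to the content of your proposal: your argument for the direction \emph{weakly ordered $\Rightarrow$ $\R$-trivial} is complete and correct. For the converse you yourself flag the gap honestly: axiom~(3) is not established. Your sketch (pass to the idempotent $e=s^{\omega}$, look at $(eu)^{\omega}$, use that $x(eu)^{k}$ eventually stabilises by $\R$-triviality, and try to feed in $(ab)^{\omega}a=(ab)^{\omega}$) does not close. Stabilisation of $x(eu)^{k}$ at some large $k$ gives $x(eu)^{N}=x(eu)^{N+1}$, but you need the equality already at $k=0$, and nothing in your outline pulls it back: the identity $(ab)^{\omega}a=(ab)^{\omega}$ lets you absorb a factor \emph{after} an $\omega$-power, not before. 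The missing ingredient is a link between ``$\supp(u)\preceq\supp(e)$'' in the abstract semilattice quotient and an actual algebraic relation in $\Monoid$ strong enough to force $eu$ to stabilise $x$; with the bare maximal-quotient description of $\supp$ this link is not visible, which is precisely why the cited source (and the paper's Remark~\ref{remark.lattice}) works instead with the concrete realisation $\supp(x)=\Monoid x^{\omega}$, where $\supp(u)\preceq\supp(e)$ unwinds to a containment of principal left ideals that one can manipulate directly.
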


If $\Monoid$ is $\R$-trivial, then for each $x\in \Monoid$ there exists an exponent of $x$ such that
$x^\omega x = x^\omega$. In particular $x^\omega$ is idempotent, that is, $(x^\omega)^2 = x^\omega$.

Given an $\R$-trivial monoid $\Monoid$, one might be interested in finding the underlying semi-lattice $\LM$
and maps $\supp,\des$.
\begin{rem}
\label{remark.lattice}
The upper semi-lattice $\LM$ and the maps $\supp,\des$ for an $\R$-trivial monoid $\Monoid$ can be constructed as follows:
\begin{enumerate}
\item $\LM$ is the set of left ideals $\Monoid e$ generated by the idempotents $e\in \Monoid$, ordered by
reverse inclusion.
\item $\supp:\Monoid \to \LM$ is defined as $\supp(x) = \Monoid x^\omega$.
\item $\des: \Monoid \to \LM$ is defined as $\des(x)=\supp(e)$, where $e$ is some maximal element in the set
$\{y\in \Monoid \mid xy=x\}$ with respect to the preorder $\le_{\R}$.
\end{enumerate}
\end{rem}

The idea of associating a lattice (or semi-lattice) to certain monoids has been used for a long time in the 
semigroup community~\cite{clifford_preston.1961}.

\subsection{$\R$-triviality of the promotion monoid}
\label{section.R trivial promotion}

Now let $P$ be a rooted forest of size $n$ and $\prom_i$ for $1\le i\le n$ the operators on $\L(P)$
defined by the promotion graph of Section~\ref{subsection.promotion}. That is, for $\pi,\pi' \in \L(P)$,
the operator $\prom_i$ maps $\pi$ to $\pi'$ if $\pi' = \pi \partial_{\pi^{-1}_i}$.
We are interested in the monoid $\Monoid^{\prom}$ generated by $\{\prom_i \mid 1\le i \le n\}$.

\begin{lem}
\label{lemma.action of del}
Let $P$ and $\prom_i$ be as above, and $\pi \in \L(P)$. Then $\pi \prom_i$ is the linear extension in $\L(P)$
obtained from $\pi$ by moving the letter $i$ to position $n$ and reordering all letters $j \succeq i$.
\end{lem}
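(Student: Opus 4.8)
The plan is to work directly from the operator identity $\prom_i = \partial_{\pi^{-1}_i} = \tau_p\tau_{p+1}\cdots\tau_{n-1}$ (via~\eqref{equation.promotion tau}), where $p:=\pi^{-1}_i$ is the position of the letter $i$ in the one-line notation of $\pi$, and to analyze the effect of applying the transpositions $\tau_p,\tau_{p+1},\dots,\tau_{n-1}$ one at a time, from left to right. The crucial structural input is that $P$ is a rooted forest: since every node has at most one successor, the principal up-set $U_i=\{x\in P\mid x\succeq i\}$ is a chain, say $i=u_0\prec u_1\prec\cdots\prec u_m$ (the path from $i$ to the root of its tree). Because $\pi$ is a linear extension, these letters occur in $\pi$ at strictly increasing positions $p=q_0<q_1<\cdots<q_m$, where $q_k=\pi^{-1}_{u_k}$, and no letter below $i$ can occur to the right of position $p$.

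First I would isolate the combinatorial fact that makes the forest case tractable: for each $k$, the strict up-set of $u_k$ is exactly the sub-chain $\{u_{k+1},\dots,u_m\}$. Consequently, whenever a letter $u_k$ occupies some position $r$ and we apply $\tau_r$, the letter to its right is either comparable to $u_k$ — in which case it must be one of $u_{k+1},\dots,u_m$ and $\tau_r$ acts trivially — or incomparable to $u_k$, in which case $\tau_r$ swaps them and $u_k$ moves one step to the right. (A letter strictly below $u_k$ cannot sit to its right, by the linear-extension property.) In particular every letter lying strictly between two consecutive up-set positions, or beyond $q_m$, is incomparable to the up-set element immediately to its left. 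With this in hand, I would describe the sequence $\tau_p,\dots,\tau_{n-1}$ as a baton-passing bubble sort: the letter $u_0=i$ bubbles rightward, swapping past every intervening (incomparable) letter, until it abuts $u_1$ at position $q_1-1$, where the next transposition acts trivially and leaves $u_0$ fixed there; the baton then passes to $u_1$, which bubbles up to $q_2-1$, and so on, until $u_m$ bubbles all the way to position $n$.

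A short bookkeeping argument — tracking that each $\tau$ has strictly increasing index and hence never revisits a position already vacated — then shows that the net effect is $u_k\mapsto q_{k+1}-1$ for $0\le k<m$ and $u_m\mapsto n$, that every non-up-set letter originally in a position $>p$ is shifted left by exactly one, and that every letter in a position $<p$ is untouched. Finally I would observe that this is precisely the move in the statement: transporting the letter $i$ from position $p$ to position $n$ shifts all later letters left by one and frees the up-set positions $\{q_1-1,\dots,q_m-1\}\cup\{n\}$, and sorting the chain $u_0\prec\cdots\prec u_m$ into these positions in increasing order reproduces exactly the assignment computed above. Hence $\pi\prom_i$ is obtained from $\pi$ by moving $i$ to position $n$ and reordering the letters $\succeq i$, as claimed.

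The main obstacle is the baton-passing step: rigorously justifying that between consecutive up-set elements every letter is genuinely incomparable to the currently active element (so that it is swapped past exactly once) and that no smaller element ever blocks the bubble. This is exactly where both the rooted-forest hypothesis (up-sets are chains, so all up-set elements are totally ordered) and the linear-extension property are indispensable; for a general poset the up-set of $i$ need not be a chain and the active letter could meet an incomparable up-set element, breaking the clean analysis. An equivalent but notationally heavier route would be to run Sch\"utzenberger's jeu-de-taquin definition of $\partial_{\pi^{-1}_i}$ directly on the filling of $P$, where the slide path up the chain $U_i$ is forced by the unique-successor property and the relabeling step yields the same final positions.
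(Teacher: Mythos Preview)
Your proposal is correct and follows essentially the same approach as the paper's own proof: unpack $\prom_i$ as $\tau_p\tau_{p+1}\cdots\tau_{n-1}$ and track the ``active'' element as it bubbles rightward, passing the baton to its unique successor whenever it meets a comparable letter. The paper's version is a terse two-sentence sketch of exactly this baton-passing argument (``In the first (resp. second) case we repeat the argument with $i$ replaced by its unique successor $j$ (resp. $i$) and $\tau_k$ replaced by $\tau_{k+1}$''), whereas you have supplied the careful bookkeeping --- the explicit chain $u_0\prec\cdots\prec u_m$, the positions $q_0<\cdots<q_m$, and the verification that the letter at position $r+1$ when $u_k$ is active at position $r$ is still the original $\pi_{r+1}$ and hence cannot be $\prec u_k$ --- that the paper leaves to the reader.
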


\begin{proof}
Suppose $\pi_i^{-1} = k$. Then the letter $i$ is in position $k$ in $\pi$. Furthermore by definition
$\pi \prom_{\pi^{-1}_i} = \pi \prom_k = \pi \tau_k \tau_{k+1} \cdots \tau_{n-1}$. Since $\pi$ is a linear
extension of $P$, all comparable letters are ordered within $\pi$. Hence $\tau_k$ either tries to switch
$i$ with a letter $j \succeq i$ or an incomparable letter $j$. In the case $j \succeq i$, $\tau_k$ acts as the
identity. In the other case $\tau_k$ switches the elements. In the first (resp. second) case we repeat the 
argument with $i$ replaced by its unique successor $j$ (resp. $i$) and $\tau_k$ replaced by $\tau_{k+1}$ etc.. 
It is not hard to see that this results in the claim of the lemma.
\end{proof}

\begin{eg}
\label{example.move letter}
Let $P$ be the union of a chain of length 3 and a chain of length 2, where the first chain is labeled by the 
elements $\{1,2,3\}$ and the second chain by $\{4,5\}$. Then $41235 \; \prom_1 = 41253$, which is obtained by moving
the letter 1 to the end of the word and then reordering the letters $\{1,2,3\}$, so that the result is again a linear extension
of $P$.

As another example, let $P$ be the rooted tree of Figure~\ref{figure.rooted tree}. Then $31245 \in \L(P)$. It is easy to check
from the definition that $31245 \; \prom_3 = 12345$. In accordance with Lemma~\ref{lemma.action of del}, we can move the letter
3 to the back to obtain $12453$. However, then the letters $3,4,5$ in $j \succeq 3$ are out of order and needs to be
reordered to obtain 12345. 
\end{eg}

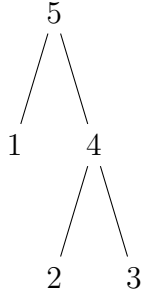
\begin{figure}
\begin{tikzpicture}[>=latex,line join=bevel,]
\node (1) at (6bp,57bp) [draw, draw = none] {$1$};
  \node (3) at (51bp,7bp) [draw, draw = none] {$3$};
  \node (2) at (21bp,7bp) [draw, draw = none] {$2$};
  \node (5) at (21bp,107bp) [draw, draw = none] {$5$};
  \node (4) at (36bp,57bp) [draw, draw = none] {$4$};
  \draw [black,-] (5) ..controls (28.603bp,81.657bp) and (31.916bp,70.612bp)  .. (4);
  \draw [black,-] (4) ..controls (28.397bp,31.657bp) and (25.084bp,20.612bp)  .. (2);
  \draw [black,-] (4) ..controls (43.603bp,31.657bp) and (46.916bp,20.612bp)  .. (3);
  \draw [black,-] (5) ..controls (13.397bp,81.657bp) and (10.084bp,70.612bp)  .. (1);
\end{tikzpicture}
\caption{Rooted tree used in Example~\ref{example.move letter}
\label{figure.rooted tree}}
\end{figure}

Let $x\in \Monoid^{\prom}$. The image of $x$ is $\im(x) = \{\pi x \mid \pi \in \L(P)\}$.
Furthermore, for each $\pi \in \im(x)$, let $\fiber(\pi,x) = \{ \pi' \in \L(P) \mid \pi = \pi' x\}$.
Let $\rfactor(x)$ be the maximal common right factor of all elements in $\im(x)$, that
is, all elements $\pi \in \im(x)$ can be written as $\pi = \pi_1 \cdots  \pi_m \rfactor(x)$ and there is no
bigger right factor for which this is true. Let us also define the set of entries in the right
factor $\Rfactor(x) = \{ i \mid i\in\rfactor(x) \}$. Note that since all elements in the image set of $x$
are linear extensions of $P$, $\Rfactor(x)$ is an upper set of $P$.

By Lemma~\ref{lemma.action of del} linear extensions in $\im(\prom_i)$ have as their last letter 
$\max_P\{j \mid j \succeq i\}$; this maximum is unique since $P$ is a rooted forest.
Hence it is clear that $\im(\prom_i x) \subseteq \im(x)$ for any $x\in \Monoid^{\prom}$ and $1\le i \le n$.
In particular, if $x\le_\LL y$, that is $y=ux$ for some $u\in \Monoid^{\prom}$, then $\im(y) \subseteq \im(x)$.
Hence $x,y$ can only be in the same $\LL$-class if $\im(x)=\im(y)$.

Fix $x \in \Monoid^{\prom}$ and let the set $I_x = \{i_1,\ldots, i_k \}$ be maximal such that 
$\prom_{i_j} x = x$ for $1\le j\le k$. The following holds.

\begin{lem} \label{claim.idempotent}
If $x$ is an idempotent, then $\Rfactor(x) = I_x$.
\end{lem}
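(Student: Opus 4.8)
The plan is to prove the two inclusions $\Rfactor(x)\subseteq I_x$ and $I_x\subseteq\Rfactor(x)$. Write $S=\Rfactor(x)$ and $r=\rfactor(x)$, so every element of $\im(x)$ has the form $u\,r$ with $u$ a linear extension of the induced subposet on $[n]\setminus S$; recall that $S$ is an upper set. For $i\in[n]$ put $U_i=\{j\mid j\succeq i\}$, which is a chain in the rooted forest. By Lemma~\ref{lemma.action of del} the operator $\prom_i$ only relocates the letters of $U_i$ (pushing them toward the end and re-sorting them) while shifting the remaining letters without changing their internal order; hence $\prom_i$ preserves the relative order of all letters outside $U_i$. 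Since $x$ is idempotent it fixes $\im(x)$ pointwise, and $\prom_i x=x$ holds precisely when $x(\pi\prom_i)=x(\pi)$ for all $\pi\in\L(P)$.

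The crux, and what I expect to be the main obstacle, is the following functional description of the idempotent: for every $\pi\in\L(P)$,
\[
	x(\pi)=\bigl(\pi|_{[n]\setminus S}\bigr)\,r,
\]
i.e.\ $x$ keeps the letters outside $S$ in their original relative order and appends the fixed word $r$. To prove this I would express $x$ as a word in the generators $\prom_a$ and use idempotency to show that no letter of $[n]\setminus S$ is ever permuted relative to another such letter while $x$ is applied: any letter that is genuinely moved is carried into the growing suffix, and because $x^2=x$ fixes the common suffix $r$, every letter that invariably ends up in the suffix must already belong to $S$. Thus the prefix that $x$ produces is exactly the subword of $\pi$ on $[n]\setminus S$.

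Granting this formula, the inclusion $\Rfactor(x)\subseteq I_x$ is immediate. If $i\in S$ then $U_i\subseteq S$ since $S$ is an upper set, so by Lemma~\ref{lemma.action of del} the operator $\prom_i$ leaves $\pi|_{[n]\setminus S}$ unchanged; the formula then gives $x(\pi\prom_i)=(\pi\prom_i)|_{[n]\setminus S}\,r=\pi|_{[n]\setminus S}\,r=x(\pi)$ for all $\pi$, whence $\prom_i x=x$ and $i\in I_x$.

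For the reverse inclusion I argue the contrapositive: if $i\notin S$ then $\prom_i x\neq x$. First, $[n]\setminus S$ must contain a letter $c$ incomparable to $i$. Indeed, if every element of $[n]\setminus S$ were comparable to $i$, then the induced subposet on $[n]\setminus S$ would have a unique maximal element $w$, namely the top of its intersection with the chain $U_i$; this $w$ would be the last prefix letter of every element of $\im(x)$, forcing $w\,r$ to be a common right factor and hence $w\in S$, contradicting $w\in[n]\setminus S$. Now choose $\pi\in\L(P)$ in which $i$ is immediately followed by $c$; since $i$ and $c$ are incomparable, $\prom_i$ transposes them while leaving the rest of $\pi|_{[n]\setminus S}$ alone, so $(\pi\prom_i)|_{[n]\setminus S}\neq\pi|_{[n]\setminus S}$. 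By the displayed formula $x(\pi\prom_i)\neq x(\pi)$, so $\prom_i x\neq x$ and $i\notin I_x$. The only routine points left to check are the existence of such a $\pi$ and that $\prom_i$ indeed effects the single transposition, both elementary consequences of Lemma~\ref{lemma.action of del}.
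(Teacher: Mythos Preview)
Your argument hinges on the displayed functional formula $x(\pi)=(\pi|_{[n]\setminus S})\,r$, which you correctly flag as the crux but only sketch. The sketch (``any letter genuinely moved is carried into the growing suffix'') is not a proof: a step $\prom_{\alpha_j}$ permutes the letters of $U_{\alpha_j}$ among themselves and relative to incomparable letters, and nothing you have said forces those letters to land in $S$. What actually makes the formula true is that every generator index $\alpha_j$ in any word $x=\prom_{\alpha_1}\cdots\prom_{\alpha_m}$ already lies in $S=\Rfactor(x)$. This is precisely the paper's key step, proved by an image-size argument: if some $\alpha_j\notin S$, then by Lemma~\ref{lemma.action of del} the product $x\prom_{\alpha_j}$ has a strictly longer common right factor than $x$, hence strictly smaller image, contradicting $x^2=x$. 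Once each $\alpha_j\in S$, every $\prom_{\alpha_j}$ touches only letters of $U_{\alpha_j}\subseteq S$, so $x$ preserves the relative order of $[n]\setminus S$ and your formula follows at once.

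Granting the formula, your two inclusions are correct. For $I_x\subseteq\Rfactor(x)$ your constructive route (produce an incomparable $c\in[n]\setminus S$ and a witness $\pi$ with $i$ immediately before $c$) differs from the paper's, which instead rewrites $x=\prom_i\,\prom_{\alpha_1}\cdots\prom_{\alpha_m}$ and reapplies the image-size argument to force $i\in S$. Your approach is more explicit, while the paper's is shorter once that argument is in hand. One minor imprecision: $\prom_i$ does more than transpose $i$ and $c$ (it may further shuffle $U_i$), but all you actually need is that the relative order of $i$ and $c$ in the $[n]\setminus S$-subword is reversed, and that is guaranteed since after the first $\tau$-swap $c$ sits at the old position of $i$ and is untouched thereafter.
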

\begin{proof}
Recall that the operators $\prom_i$ generate $\Monoid^{\prom}$. Hence we can write
$x = \prom_{\alpha_1} \cdots \prom_{\alpha_m}$ for some $\alpha_j\in [n]$. 

The condition $\prom_i x = x$ is equivalent to the condition that for every $\pi \in \im(\prom_i)$ there is a $\pi'\in \im(x)$ 
such that $\fiber(\pi, \prom_i) \subseteq \fiber(\pi',x)$ and $\pi' = \pi x$. Since $x$ is idempotent we also have $\pi' = \pi' x$.
The first condition $\fiber(\pi, \prom_i) \subseteq \fiber(\pi',x)$ makes sure that the fibers of $x$ are coarser than the fibers of $\prom_i$;
this is a necessary condition for $\prom_i x = x$  to hold (recall that we are acting on the right) since the fibers of
$\prom_i x$ are coarser than the fibers of $\prom_i$. The second condition $\pi' = \pi x$ ensures that 
$\im(\prom_i x) = \im(x)$.
Conversely, if the two conditions hold, then certainly $\prom_i x = x$.
Since $x^2=x$ is an idempotent, we hence must have $\prom_{\alpha_j} x = x$ for all $1\le j\le m$.

Now let us consider $x\prom_{\alpha_j}$. If $\alpha_j \not \in \Rfactor(x)$, then by Lemma~\ref{lemma.action of del} we have
$\Rfactor(x) \subsetneq \Rfactor(x\prom_{\alpha_j})$ and hence $|\im(x\prom_{\alpha_j})|< |\im(x)|$, which contradicts the
fact that $x^2=x$. Therefore, $\alpha_j\in \Rfactor(x)$.

Now suppose $\prom_ix=x$. Then $x=\prom_i \prom_{\alpha_1} \cdots \prom_{\alpha_m}$ and by the same arguments
as above $i\in \Rfactor(x)$. Hence $I_x \subseteq \Rfactor(x)$. 
Conversely, suppose $i\in \Rfactor(x)$. Then $x\prom_i$ has the same fibers as $x$ (but possibly a different image set since 
$\rfactor(x\prom_i) = \rfactor(x) \prom_i$ which can be different from $\rfactor(x)$). This implies $x\prom_i x =x$. 
Hence considering the expression in terms of generators $x= \prom_{\alpha_1} \cdots \prom_{\alpha_m} \prom_i  
\prom_{\alpha_1} \cdots \prom_{\alpha_m}$, the above arguments imply that $\prom_i x = x$. This shows that 
$\Rfactor(x) \subseteq I_x$ and hence $I_x = \Rfactor(x)$. This proves the claim.
\end{proof}

\begin{lem} \label{claim.nonidempotent}
$I_x$ is an upper set of $P$ for any $x\in \Monoid^{\prom}$. More precisely, $I_x = \Rfactor(e)$ for
some idempotent $e\in \Monoid^{\prom}$.
\end{lem}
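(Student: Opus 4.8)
The plan is to produce the required idempotent not from $x$ itself but from the generators recorded in $I_x$. One's first instinct, to take $e$ to be the idempotent power $x^{\omega}$, fails: the relation $\prom_i x = x$ is \emph{not} equivalent to $\prom_i x^{\omega}=x^{\omega}$, and indeed already for the non-idempotent element $x=\prom_1$ of the poset $\{1\prec 3,\,2\}$ one computes $I_x=\emptyset$ whereas $I_{x^{\omega}}=[n]$. Instead I would set $y:=\prod_{i\in I_x}\prom_i$ (an empty product being the identity of $\Monoid^{\prom}$), and, using that $\Monoid^{\prom}$ is finite, choose $t\ge 1$ with $e:=y^{t}$ idempotent. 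Note that this construction never invokes the aperiodicity relation $y^{\omega}y=y^{\omega}$, so it is legitimate at this stage, before $\R$-triviality has been established.

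First I would record that $ex=x$: since each factor of $y$ satisfies $\prom_i x=x$ by the definition of $I_x$, peeling the factors off from the right gives $yx=x$, whence $ex=y^{t}x=x$. The identity $I_x=I_e$ then splits into two inclusions. The inclusion $I_e\subseteq I_x$ is purely formal: if $\prom_i e=e$ then $\prom_i x=\prom_i(ex)=(\prom_i e)x=ex=x$, so $i\in I_x$. For the reverse inclusion $I_x\subseteq I_e$ I would exploit that $e=y^{t}$ is an idempotent whose expansion as a word in the generators involves exactly the operators $\prom_i$ with $i\in I_x$; by the argument in the proof of Lemma~\ref{claim.idempotent}, every generator occurring in a word for an idempotent fixes that idempotent on the left, so $\prom_i e=e$, i.e.\ $i\in I_e$, for each $i\in I_x$. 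Combining the two inclusions with Lemma~\ref{claim.idempotent} yields $I_x=I_e=\Rfactor(e)$; and since $\Rfactor(e)$ is an upper set of $P$ (as noted just before Lemma~\ref{claim.idempotent}), both assertions of the lemma follow at once.

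The step I expect to be the crux is the inclusion $I_x\subseteq I_e$, bundled with the correct choice of $e$. The easy inclusion and the relation $ex=x$ are formal monoid manipulations, but $I_x\subseteq I_e$ genuinely requires the combinatorial input of Lemma~\ref{lemma.action of del}, channelled through Lemma~\ref{claim.idempotent}: one must know that no generator indexed by $I_x$ is ``wasted'' in the idempotent $e$, equivalently that each such generator already appears in $\Rfactor(e)$. This is precisely what the image-shrinking argument of Lemma~\ref{claim.idempotent} delivers, so the substantive work has effectively been carried out there; the present lemma is the bookkeeping that transports it from idempotents to an arbitrary $x$ by way of the auxiliary idempotent $e$.
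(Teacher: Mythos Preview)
Your argument is correct. The construction $e=(\prod_{i\in I_x}\prom_i)^t$ together with the two inclusions $I_e\subseteq I_x$ (formal, via $ex=x$) and $I_x\subseteq I_e$ (via the first paragraph of the proof of Lemma~\ref{claim.idempotent}) gives exactly what is needed, and your care about not presupposing aperiodicity is well placed.

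The paper proceeds differently: rather than building $e$ explicitly from the generators indexed by $I_x$, it first observes that high powers $x^\ell$ stabilise to an idempotent, and then chooses $e$ to be a \emph{maximal} idempotent (in the $\ge_\R$ preorder) with $ex=x$, asserting $I_e=I_x$ from maximality. Your construction is more concrete and makes the equality $I_e=I_x$ transparent, at the cost of importing a specific sub-claim from the proof of Lemma~\ref{claim.idempotent}; the paper's version is shorter but leaves the reader to reconstruct why maximality forces $I_x\subseteq I_e$. Both routes ultimately rest on the same combinatorial input (Lemma~\ref{lemma.action of del} through Lemma~\ref{claim.idempotent}), so the difference is one of packaging rather than of substance. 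Your observation that $e=x^{\omega}$ does not work, with the explicit counterexample, is a nice clarification that the paper does not spell out.
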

\begin{proof}
For any $x\in \Monoid^{\prom}$, $\rfactor(x) \subseteq \rfactor(x^\ell)$ for any integer $\ell>0$. Also, the fibers of $x^\ell$
are coarser or equal to the fibers of $x$. Since the right factors can be of length at most $n$ (the size of $P$) and 
$\Monoid^{\prom}$ is finite, for $\ell$ sufficiently large we have $(x^\ell)^2=x^\ell$, so that
$x^\ell$ is an idempotent. Now take a maximal idempotent $e$ in the $\ge_{\R}$ preorder such that $ex=x$ 
(when $I_x=\emptyset$ we have $e=\mathbbm{1}$) which exists by the previous arguments. Then $I_e=I_x$ which by
Lemma~\ref{claim.idempotent} is also $\Rfactor(e)$. This proves the claim.
\end{proof}

Let $M$ be the transition matrix of the promotion graph of
Section~\ref{subsection.promotion}. Define $\Monoid$ to be the monoid
generated by $\{ G_i \mid 1\le i \le n\}$, where $G_i$ is the matrix
$M$ evaluated at $x_i =1$ and all other $x_j=0$.  We are now ready to
state the main result of this section.

\begin{thm} \label{theorem.r trivial}
$\Monoid$ is $\R$-trivial.
\end{thm}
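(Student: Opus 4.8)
The plan is to deduce the $\R$-triviality of $\Monoid$ from the structure of the operator monoid $\Monoid^{\prom}$ recorded in Lemmas~\ref{lemma.action of del}, \ref{claim.idempotent} and~\ref{claim.nonidempotent}, and then to feed that structure into the weakly ordered characterization of Theorem~\ref{theorem.Rtrivial weakly}. First I would make the link between the two monoids precise. By construction $G_i$ is the $0/1$ matrix with $(G_i)_{\pi',\pi}=1$ exactly when $\pi'=\pi\prom_i$, so that $G_i e_\pi = e_{\pi\prom_i}$ on the basis $\{e_\pi \mid \pi\in\L(P)\}$. Consequently $G_iG_j\,e_\pi = e_{\pi\prom_j\prom_i}$, i.e. matrix multiplication reverses the order of the operators, and the assignment $\prom_i\mapsto G_i$ extends to an anti-isomorphism $\Monoid^{\prom}\to\Monoid$. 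Thus $\Monoid\cong(\Monoid^{\prom})^{\mathrm{op}}$, and since a finite monoid is $\R$-trivial precisely when its opposite is $\LL$-trivial, every statement about the $\le_{\R}$ order on $\Monoid$ translates into a statement about the $\le_{\LL}$ order on $\Monoid^{\prom}$ --- which is exactly the order analysed in the preceding lemmas. In particular, it suffices to equip $\Monoid$ with a weakly ordered structure in the sense of Definition~\ref{definition.weakly}.

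Next I would build the data $(\LM,\supp,\des)$, guided by Remark~\ref{remark.lattice}. I take $\LM$ to be the lattice of upper sets of $P$ (an upper semi-lattice under union, the same lattice indexing the eigenvalues in Theorem~\ref{theorem.eigenvalues}). This choice is forced by Lemma~\ref{claim.nonidempotent}: the set $I_x=\{i\mid \prom_i x=x\}$ is always an upper set, and by Lemma~\ref{claim.idempotent} it equals $\Rfactor(e)$ for the idempotent $e=x^{\omega}$. I would then set $\supp(M_w)=\Rfactor(w^{\omega})=I_w$ and define $\des(M_w)$ through the maximal left fixer of $w$ as in Remark~\ref{remark.lattice}(3); Lemma~\ref{claim.nonidempotent} guarantees this is again an upper set depending only on $w^{\omega}$. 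The surjectivity half of Axiom~(1) is then immediate, since by Lemma~\ref{claim.nonidempotent} every upper set of $P$ is realised as some $\Rfactor(e)$.

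Finally I would verify the three axioms. The content of Axioms~(2) and~(3) is the equivalence, for $X=M_w$ and $Y=M_v$,
\[
XY\le_{\R} X \iff \supp(Y)\preceq\des(X)\iff XY=X,
\]
which, after transport to $\Monoid^{\prom}$ via the anti-isomorphism, becomes the assertion that $w=t v w$ for some $t$ forces $I_v\subseteq I_w$, and conversely that $I_v\subseteq I_w$ forces $vw=w$. Here Lemma~\ref{lemma.action of del} (each generator sends the letter $i$, and hence the whole tree above it, to the end of the word) controls how $\Rfactor$ and the fibers behave under multiplication, while Lemmas~\ref{claim.idempotent} and~\ref{claim.nonidempotent} allow me to replace $v$ and $w$ by their idempotent powers and thereby reduce the fixing conditions to containments of upper sets.

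The step I expect to be the main obstacle is the morphism property in Axiom~(1), namely $\supp(XY)=\supp(X)\vee\supp(Y)$, i.e. $I_{vw}=I_v\cup I_w$. The difficulty is that a product of generators can reorder the letters lying outside its right factor, so that $\Rfactor$ and the fiber partition do not combine naively under multiplication; one cannot simply read off $I_{vw}$ from the separate actions of $v$ and $w$. The resolution I envisage is to argue not at the level of $v$ and $w$ but at the level of the idempotents $v^{\omega}$, $w^{\omega}$ and $(vw)^{\omega}$, using Lemma~\ref{claim.idempotent} to identify each support with the corresponding common right factor and Lemma~\ref{lemma.action of del} to track which letters become permanently trapped in the suffix. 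Once the three axioms are in place, Theorem~\ref{theorem.Rtrivial weakly} shows that $\Monoid$ is weakly ordered, hence $\R$-trivial, which completes the proof.
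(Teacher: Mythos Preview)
Your overall strategy --- pass to $\Monoid^{\prom}$ via the anti-isomorphism and verify the weakly ordered axioms of Definition~\ref{definition.weakly} using Lemmas~\ref{lemma.action of del}, \ref{claim.idempotent}, \ref{claim.nonidempotent} --- is exactly the paper's approach. However, several of your intermediate claims are false and would derail the verification.

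First, you write $\supp(M_w)=\Rfactor(w^{\omega})=I_w$, but $\Rfactor(w^{\omega})$ equals $I_{w^{\omega}}$, not $I_w$; Lemma~\ref{claim.idempotent} only identifies $\Rfactor$ with $I$ on idempotents. The paper keeps these separate, setting $\supp(x)=I_{x^{\omega}}$ and $\des(x)=I_x$, and Example~\ref{example.weakly ordered} shows they genuinely differ ($G_1$ has $\supp=\{1,2,3\}$ but $\des=\emptyset$). Relatedly, your assertion that $\des$ ``depends only on $w^{\omega}$'' is false for the same reason; if it were true you would have $\des=\supp$ and Axioms~(2)--(3) would collapse. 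Your check of Axiom~(3) inherits this confusion: ``$I_v\subseteq I_w$ forces $vw=w$'' is the wrong hypothesis --- what is needed (and what the paper proves) is $\supp(v)=I_{v^{\omega}}\subseteq I_w=\des(w)$, together with the fact that $v$ can be written in generators $\prom_i$ with $i\in\supp(v)$.

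Second, your choice of $\LM$ as \emph{all} upper sets of $P$ breaks surjectivity: Lemma~\ref{claim.nonidempotent} does not say every upper set is some $\Rfactor(e)$, and indeed in Example~\ref{example.weakly ordered} the semi-lattice $\LM=\{\emptyset,\{2\},\{1,2,3\}\}$ is strictly smaller than the lattice of upper sets (Figure~\ref{figure.lattice}). The paper instead takes $\LM=\{\Rfactor(x)\mid x\in\Monoid^{\prom},\ x^2=x\}$, for which surjectivity is tautological, and only later (in Section~\ref{section.proof}) argues that one may pass to the full lattice of upper sets because the extra elements carry multiplicity zero. With these corrections your outline becomes the paper's proof.
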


\begin{rem} \label{remark.L trivial}
Considering the matrix monoid $\Monoid$ is equivalent to considering the abstract monoid $\Monoid^{\prom}$ 
generated by $\{\prom_i \mid 1\le i \le n\}$. Since the operators $\prom_i$ act on the right on linear extensions, the 
monoid $\Monoid^{\prom}$ is $\LL$-trivial instead of $\R$-trivial.
\end{rem}

\begin{eg} \label{example.r trivial monoid}
Let $P$ be the poset on three elements $\{1,2,3\}$, where $2$ covers $1$ and there are no further relations.
The linear extensions of $P$ are $\{123,132,312\}$. 
The monoid $\Monoid$ with $\R$-order, where an edge labeled $i$ means right multiplication by $G_i$,
is depicted in Figure~\ref{figure.monoid}. From the picture it is clear that the elements in the monoid are partially ordered.
This confirms Theorem~\ref{theorem.r trivial} that the monoid is $\R$-trivial.
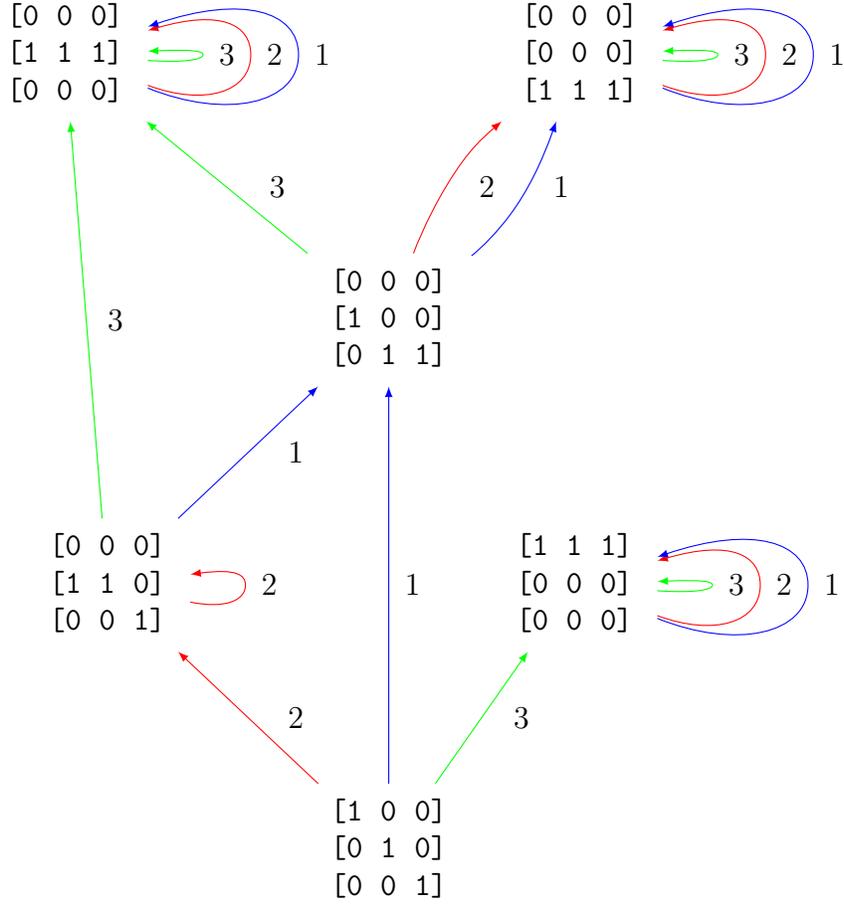
\begin{figure}
\begin{tikzpicture}[>=latex,line join=bevel]
\node (000111000) at (34bp,322bp) [draw,draw=none] {$\begin{array}{l}\verb|[0|\phantom{x}\verb|0|\phantom{x}\verb|0]|\\\verb|[1|\phantom{x}\verb|1|\phantom{x}\verb|1]|\\\verb|[0|\phantom{x}\verb|0|\phantom{x}\verb|0]|\end{array}$};
  \node (000100011) at (156bp,222bp) [draw,draw=none] {$\begin{array}{l}\verb|[0|\phantom{x}\verb|0|\phantom{x}\verb|0]|\\\verb|[1|\phantom{x}\verb|0|\phantom{x}\verb|0]|\\\verb|[0|\phantom{x}\verb|1|\phantom{x}\verb|1]|\end{array}$};
  \node (100010001) at (156bp,22bp) [draw,draw=none] {$\begin{array}{l}\verb|[1|\phantom{x}\verb|0|\phantom{x}\verb|0]|\\\verb|[0|\phantom{x}\verb|1|\phantom{x}\verb|0]|\\\verb|[0|\phantom{x}\verb|0|\phantom{x}\verb|1]|\end{array}$};
  \node (111000000) at (226bp,122bp) [draw,draw=none] {$\begin{array}{l}\verb|[1|\phantom{x}\verb|1|\phantom{x}\verb|1]|\\\verb|[0|\phantom{x}\verb|0|\phantom{x}\verb|0]|\\\verb|[0|\phantom{x}\verb|0|\phantom{x}\verb|0]|\end{array}$};
  \node (000110001) at (50bp,122bp) [draw,draw=none] {$\begin{array}{l}\verb|[0|\phantom{x}\verb|0|\phantom{x}\verb|0]|\\\verb|[1|\phantom{x}\verb|1|\phantom{x}\verb|0]|\\\verb|[0|\phantom{x}\verb|0|\phantom{x}\verb|1]|\end{array}$};
  \node (000000111) at (228bp,322bp) [draw,draw=none] {$\begin{array}{l}\verb|[0|\phantom{x}\verb|0|\phantom{x}\verb|0]|\\\verb|[0|\phantom{x}\verb|0|\phantom{x}\verb|0]|\\\verb|[1|\phantom{x}\verb|1|\phantom{x}\verb|1]|\end{array}$};
  \draw [green,<-] (111000000) ..controls (274.92bp,124.11bp) and (278bp,123.24bp)  .. (278bp,122bp) .. controls (278bp,120.01bp) and (270.13bp,118.97bp)  .. (111000000);
  \definecolor{strokecol}{rgb}{0.0,0.0,0.0};
  \pgfsetstrokecolor{strokecol}
  \draw (287bp,122bp) node {$3$};
  \draw [red,<-] (111000000) ..controls (284.71bp,139.19bp) and (296bp,133.4bp)  .. (296bp,122bp) .. controls (296bp,107.97bp) and (278.89bp,102.44bp)  .. (111000000);
  \draw (305bp,122bp) node {$2$};
  \draw [blue,<-] (111000000) ..controls (293.25bp,144.84bp) and (314bp,138.26bp)  .. (314bp,122bp) .. controls (314bp,103.42bp) and (286.9bp,97.471bp)  .. (111000000);
  \draw (323bp,122bp) node {$1$};
  \draw [red,<-] (000110001) ..controls (98.925bp,128.34bp) and (102bp,125.72bp)  .. (102bp,122bp) .. controls (102bp,116.04bp) and (94.127bp,112.92bp)  .. (000110001);
  \draw (111bp,122bp) node {$2$};
  \draw [green,<-] (000111000) ..controls (82.925bp,324.11bp) and (86bp,323.24bp)  .. (86bp,322bp) .. controls (86bp,320.01bp) and (78.127bp,318.97bp)  .. (000111000);
  \draw (95bp,322bp) node {$3$};
  \draw [red,<-] (000111000) ..controls (92.706bp,339.19bp) and (104bp,333.4bp)  .. (104bp,322bp) .. controls (104bp,307.97bp) and (86.892bp,302.44bp)  .. (000111000);
  \draw (113bp,322bp) node {$2$};
  \draw [blue,<-] (000111000) ..controls (101.25bp,344.84bp) and (122bp,338.26bp)  .. (122bp,322bp) .. controls (122bp,303.42bp) and (94.897bp,297.47bp)  .. (000111000);
  \draw (131bp,322bp) node {$1$};
  \draw [red,<-] (000000111) ..controls (190.08bp,289.62bp) and (186.84bp,285.85bp)  .. (184bp,282bp) .. controls (175.41bp,270.34bp) and (168.49bp,255.62bp)  .. (000100011);
  \draw (193bp,272bp) node {$2$};
  \draw [blue,<-] (000000111) ..controls (213.54bp,281.1bp) and (208.34bp,270.66bp)  .. (202bp,262bp) .. controls (197.25bp,255.51bp) and (191.26bp,249.35bp)  .. (000100011);
  \draw (221bp,272bp) node {$1$};
  \draw [red,<-] (000110001) ..controls (97.168bp,77.502bp) and (117.64bp,58.193bp)  .. (100010001);
  \draw (121bp,72bp) node {$2$};
  \draw [blue,<-] (000100011) ..controls (156bp,148.97bp) and (156bp,79.056bp)  .. (100010001);
  \draw (165bp,122bp) node {$1$};
  \draw [green,<-] (000111000) ..controls (39.818bp,249.27bp) and (45.43bp,179.13bp)  .. (000110001);
  \draw (53bp,222bp) node {$3$};
  \draw [green,<-] (111000000) ..controls (194.12bp,76.461bp) and (181.06bp,57.8bp)  .. (100010001);
  \draw (206bp,72bp) node {$3$};
  \draw [green,<-] (000000111) ..controls (276.92bp,324.11bp) and (280bp,323.24bp)  .. (280bp,322bp) .. controls (280bp,320.01bp) and (272.13bp,318.97bp)  .. (000000111);
  \draw (289bp,322bp) node {$3$};
  \draw [red,<-] (000000111) ..controls (286.71bp,339.19bp) and (298bp,333.4bp)  .. (298bp,322bp) .. controls (298bp,307.97bp) and (280.89bp,302.44bp)  .. (000000111);
  \draw (307bp,322bp) node {$2$};
  \draw [blue,<-] (000000111) ..controls (295.25bp,344.84bp) and (316bp,338.26bp)  .. (316bp,322bp) .. controls (316bp,303.42bp) and (288.9bp,297.47bp)  .. (000000111);
  \draw (325bp,322bp) node {$1$};
  \draw [green,<-] (000111000) ..controls (87.65bp,278.02bp) and (111.61bp,258.39bp)  .. (000100011);
  \draw (114bp,272bp) node {$3$};
  \draw [blue,<-] (000100011) ..controls (108.83bp,177.5bp) and (88.364bp,158.19bp)  .. (000110001);
  \draw (121bp,172bp) node {$1$};
\end{tikzpicture}
\caption{Monoid $\Monoid$ in right order for the poset of Example~\ref{example.r trivial monoid}
\label{figure.monoid}}
\end{figure}
\end{eg}

\begin{eg} \label{example.not r trivial monoid}
Now consider the poset $P$ on three elements $\{1,2,3\}$, where $1$ is covered by both $2$ and $3$ with no further relations.
The linear extensions of $P$ are $\{123,132\}$.
This poset is not a rooted forest. The corresponding monoid in $\R$-order is depicted in Figure~\ref{figure.monoid not r}.
The two elements
\[
	\begin{pmatrix} 0&1\\ 1&0 \end{pmatrix} \quad \text{and} \quad \begin{pmatrix} 1&0 \\ 0&1 \end{pmatrix}
\]
are in the same $\R$-class. Hence the monoid is not $\R$-trivial, which is consistent with Theorem~\ref{theorem.r trivial}.
\begin{figure}
\begin{tikzpicture}[>=latex,join=bevel]
\node (0110) at (115bp,104bp) [draw,draw=none] {$\begin{array}{l}\verb|[0|\phantom{x}\verb|1]|\\\verb|[1|\phantom{x}\verb|0]|\end{array}$};
  \node (0011) at (204bp,192bp) [draw,draw=none] {$\begin{array}{l}\verb|[0|\phantom{x}\verb|0]|\\\verb|[1|\phantom{x}\verb|1]|\end{array}$};
  \node (1001) at (114bp,16bp) [draw,draw=none] {$\begin{array}{l}\verb|[1|\phantom{x}\verb|0]|\\\verb|[0|\phantom{x}\verb|1]|\end{array}$};
  \node (1100) at (26bp,192bp) [draw,draw=none] {$\begin{array}{l}\verb|[1|\phantom{x}\verb|1]|\\\verb|[0|\phantom{x}\verb|0]|\end{array}$};
  \draw [blue,<-] (0110) ..controls (94.527bp,76.712bp) and (93.034bp,73.373bp)  .. (92bp,70bp) .. controls (89.395bp,61.501bp) and (89.482bp,58.525bp)  .. (92bp,50bp) .. controls (93.912bp,43.527bp) and (97.483bp,37.127bp)  .. (1001);
  \definecolor{strokecol}{rgb}{0.0,0.0,0.0};
  \pgfsetstrokecolor{strokecol}
  \draw (101bp,60bp) node {$1$};
  \draw [red,<-] (1100) ..controls (53.257bp,165.6bp) and (57.25bp,161.7bp)  .. (61bp,158bp) .. controls (74.031bp,145.16bp) and (88.705bp,130.47bp)  .. (0110);
  \draw (89bp,148bp) node {$2$};
  \draw [green,<-] (0011) ..controls (245.02bp,193.97bp) and (248bp,193.17bp)  .. (248bp,192bp) .. controls (248bp,190.08bp) and (239.96bp,189.17bp)  .. (0011);
  \draw (257bp,192bp) node {$3$};
  \draw [red,<-] (0011) ..controls (254.25bp,209.73bp) and (266bp,204.09bp)  .. (266bp,192bp) .. controls (266bp,177.12bp) and (248.2bp,172.01bp)  .. (0011);
  \draw (275bp,192bp) node {$2$};
  \draw [blue,<-] (0011) ..controls (262.21bp,214.9bp) and (284bp,208.83bp)  .. (284bp,192bp) .. controls (284bp,172.76bp) and (255.54bp,167.58bp)  .. (0011);
  \draw (293bp,192bp) node {$1$};
  \draw [green,<-] (1100) ..controls (67.016bp,193.97bp) and (70bp,193.17bp)  .. (70bp,192bp) .. controls (70bp,190.08bp) and (61.963bp,189.17bp)  .. (1100);
  \draw (79bp,192bp) node {$3$};
  \draw [red,<-] (1100) ..controls (76.249bp,209.73bp) and (88bp,204.09bp)  .. (88bp,192bp) .. controls (88bp,177.12bp) and (70.199bp,172.01bp)  .. (1100);
  \draw (97bp,192bp) node {$2$};
  \draw [blue,<-] (1100) ..controls (84.212bp,214.9bp) and (106bp,208.83bp)  .. (106bp,192bp) .. controls (106bp,172.76bp) and (77.542bp,167.58bp)  .. (1100);
  \draw (115bp,192bp) node {$1$};
  \draw [blue,<-] (1001) ..controls (114.46bp,56.877bp) and (114.67bp,75.378bp)  .. (0110);
  \draw (123bp,60bp) node {$1$};
  \draw [red,<-] (0011) ..controls (180.52bp,138.28bp) and (158.34bp,89.894bp)  .. (136bp,50bp) .. controls (132.56bp,43.866bp) and (128.46bp,37.336bp)  .. (1001);
  \draw (179bp,104bp) node {$2$};
  \draw [green,<-] (0011) ..controls (162.79bp,168.26bp) and (156.41bp,163.32bp)  .. (151bp,158bp) .. controls (139.59bp,146.79bp) and (129.76bp,131.4bp)  .. (0110);
  \draw (160bp,148bp) node {$3$};
  \draw [green,<-] (1100) ..controls (39.3bp,145.26bp) and (49.546bp,113.9bp)  .. (62bp,88bp) .. controls (70.658bp,69.998bp) and (74.294bp,66.186bp)  .. (86bp,50bp) .. controls (90.481bp,43.804bp) and (95.734bp,37.26bp)  .. (1001);
  \draw (71bp,104bp) node {$3$};
\end{tikzpicture}
\caption{Monoid $\Monoid$ in right order for the poset of Example~\ref{example.not r trivial monoid}
\label{figure.monoid not r}}
\end{figure}
\end{eg}

\begin{proof}[Proof of Theorem~\ref{theorem.r trivial}]
By Theorem~\ref{theorem.Rtrivial weakly} a monoid is $\R$-trivial if and only if it is weakly ordered.
We prove the theorem by explicitly constructing the semi-lattice $\LM$ and maps $\supp, \des:\Monoid^{\prom} \to \LM$ 
of Definition~\ref{definition.weakly}. In fact, since we work with $\Monoid^{\prom}$, we will establish the
left version of Definition~\ref{definition.weakly} by Remark~\ref{remark.L trivial}.

Recall that for $x \in \Monoid^{\prom}$, we defined the set $I_x = \{i_1,\ldots, i_k \}$ to be maximal such that 
$\prom_{i_j} x = x$ for $1\le j\le k$. 

Define $\des(x)=I_x$ and $\supp(x) = \des(x^\omega)$. By Lemma~\ref{claim.idempotent}, for idempotents $x$
we have $\supp(x) = \des(x) = I_x = \Rfactor(x)$. Let $\LM = \{ \Rfactor(x) \mid x\in \Monoid^{\prom}, x^2=x \}$ which 
has a natural semi-lattice structure $(\LM,\preceq)$ by inclusion of sets. The join operation is union of sets.

Certainly by Lemma~\ref{claim.idempotent} and the definition of $\LM$, the map $\supp$ is surjective. We want 
to show that in addition $\supp(xy) = \supp(x) \vee \supp(y)$, where $\vee$ is the join in $\LM$. 
Recall that $\supp(x) = \des(x^\omega) = \Rfactor(x^\omega)$.
If $x = \prom_{j_1} \cdots \prom_{j_m}$ in terms of the generators and $J_x:=\{j_1,\ldots, j_m\}$, then
by Lemma~\ref{lemma.action of del} $\Rfactor(x^\omega)$ contains the upper set of $J_x$ in $P$ plus possibly 
some more elements that are forced if the upper set of $J_x$ has only one successor in the semi-lattice of upper sets in $P$.
A similar argument holds for $y$ with $J_y$. Now again by Lemma~\ref{lemma.action of del},
$\supp(xy) = \Rfactor((xy)^{\omega})$ contains the elements in the upper set of $J_x \cup J_y$, plus possibly more
forced by the same reason as before. Hence $\supp(xy) = \supp(x) \vee \supp(y)$.
This shows that Definition~\ref{definition.weakly} (1) holds.

Suppose $x,y\in \Monoid^{\prom}$ with $yx \le_{\LL} x$. Then there exists a $z\in \Monoid^{\prom}$ such that
$zyx=x$. Hence $\supp(y) \preceq \supp(zy) \preceq I_x =\des(x)$ by Lemmas~\ref{claim.idempotent} 
and~\ref{claim.nonidempotent}.
Conversely, if $x,y\in \Monoid^{\prom}$ are such that $\supp(y) \preceq \des(x)$, then by the definition of $\des(x)$
we have $\supp(y) \preceq I_x$, which is the list of indices of the left stabilizers of $x$. By the definition of $\supp(y)$
and the proof of Lemma~\ref{claim.idempotent}, $y^\omega$ can be written as a product of $\prom_i$ with 
$i\in \supp(y)$. The same must be true for $y$. Hence $yx=x$, which shows that the left version of (2) and (3) 
of Definition~\ref{definition.weakly} hold. 

In summary, we have shown that $\Monoid^{\prom}$ is weakly ordered in $\LL$-preorder and hence $\LL$-trivial.
This implies that $\Monoid$ is $\R$-trivial.
\end{proof}

\begin{rem} \label{remark.lattice M}
In the proof of Theorem~\ref{theorem.r trivial} we explicitly constructed the semi-lattice $\LM=\{ \Rfactor(x)
\mid x\in \Monoid^{\prom}, x^2=x\}$ and the maps $\supp,\des : \Monoid^{\prom} \to \LM$ of 
Definition~\ref{definition.weakly}. Here $\des(x)= I_x$ is the set of indices $I_x=\{i_1,\ldots, i_m\}$ such that 
$\prom_{i_j} x =x$ for all $1\le j\le m$ and $\supp(x) = \des(x^\omega) = I_{x^{\omega}} = \Rfactor(x^\omega)$.
\end{rem}

\begin{eg}
\label{example.weakly ordered}
Let $P$ be the poset of Example~\ref{example.r trivial monoid}.
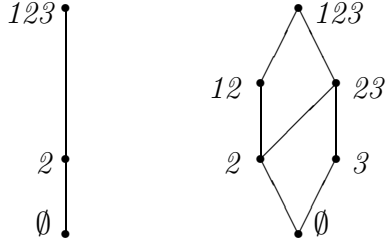
\begin{figure}
\setlength{\unitlength}{1mm}
\begin{center}
\begin{picture}(10,30)
\put(5,0){\circle*{1}}
\put(5,10){\circle*{1}}
\put(5,30){\circle*{1}}
\put(5,0){\line(0,1){30}}
\put(1,0){$\emptyset$}
\put(1,8){2}
\put(-3,28){123}
\end{picture}
\hspace{1.8cm}
\begin{picture}(10,30)
\put(5,0){\circle*{1}}
\put(0,10){\circle*{1}}
\put(10,10){\circle*{1}}
\put(0,20){\circle*{1}}
\put(10,20){\circle*{1}}
\put(5,30){\circle*{1}}
\put(5,0){\line(1,2){5}}
\put(5,0){\line(-1,2){5}}
\put(0,10){\line(0,1){10}}
\put(10,10){\line(0,1){10}}
\put(0,20){\line(1,2){5}}
\put(10,20){\line(-1,2){5}}
\put(0,10){\line(1,1){10}}
\put(7,0){$\emptyset$}
\put(-5,8){2}
\put(12,8){3}
\put(-7,18){12}
\put(12,18){23}
\put(7,28){123}
\end{picture}
\end{center}
\caption{The left graph is the lattice $\LM$ of the weakly ordered monoid for the poset in Example~\ref{example.weakly ordered}.
The right graph is the lattice $L$ of all upper sets of $P$.
\label{figure.lattice}}
\end{figure}
The monoid $\Monoid$ with $\R$-order, where an edge labeled $i$ means right multiplication by $G_i$,
is depicted in Figure~\ref{figure.monoid}. The elements $x=\mathbbm{1}, G_2,G_3,G_2G_3,G_1^2$ are
idempotent with $\supp(x)=\des(x)=\emptyset,2,123,123$, $123$, respectively. The only non-idempotent element is
$G_1$ with $\supp(G_1)=123$ and $\des(G_1)=\emptyset$.
The semi-lattice $\LM$ is the left lattice in Figure~\ref{figure.lattice}. The right graph in Figure~\ref{figure.lattice} is 
the lattice $L$ of all upper sets of $P$.
\end{eg}

\subsection{Eigenvalues and multiplicities for $\R$-trivial monoids}
\label{section.brown}

Let $\Monoid$ be a finite monoid (for example a left regular band) and $\{w_x\}_{x\in \Monoid}$ a probability
distribution on $\Monoid$ with transition matrix for the random walk given by
\begin{equation} \label{equation.transition matrix monoid}
	M(c,d) = \sum_{xc=d} w_x
\end{equation}
for $c,d \in \mathcal{C}$, where $\mathcal{C}$ is the set of maximal elements in $\Monoid$ under right
order $\ge_{\R}$. The set $\mathcal{C}$ is also called the set of \textbf{chambers}.

Recall that by Remark~\ref{remark.lattice} we can associate a semi-lattice $\LM$ and functions 
$\supp,\des:\Monoid \to \LM$ to an $\R$-trivial monoid $\Monoid$.
For $X\in \LM$,  define $c_X$ to be the number of chambers in $\Monoid_{\ge X}$, that is, the number of 
$c\in \mathcal{C}$ such that $c\ge_{\R} x$, where $x\in \Monoid$ is any fixed element with $\supp(x)=X$. 

\begin{thm} \label{theorem.main}
Let $\Monoid$ be a finite $\R$-trivial monoid with transition matrix $M$ as in~\eqref{equation.transition matrix monoid}.
Then $M$ has eigenvalues
\begin{equation}
	\lambda_X = \sum_{\substack{y \\ \supp(y) \preceq X}} w_y
\end{equation}
for each $X\in \LM$ with multiplicity $d_X$ recursively defined by
\begin{equation}
	\sum_{Y\succeq X} d_Y = c_X.
\end{equation}
Equivalently,
\begin{equation}
	d_X = \sum_{Y \succeq X} \mu(X,Y) \; c_Y,
\end{equation}
where $\mu$ is the M\"obius function on $\LM$.
\end{thm}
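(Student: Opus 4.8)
The plan is to realize $M$ as the matrix of a single element of the monoid algebra acting on a module, and then to extract its spectrum from the representation theory of $\R$-trivial monoids. First I would observe that the chambers are precisely the right zeros of $\Monoid$: for any $c\in\mathcal{C}$ and any $u\in\Monoid$ one has $cu\ge_{\R}c$, so maximality under $\ge_{\R}$ together with $\R$-triviality forces $cu=c$. It follows that $xc$ is again a right zero for every $x\in\Monoid$, so $\mathcal{C}$ is stable under left multiplication and $W:=\mathbb{C}\mathcal{C}$ is a left $\Monoid$-module. Writing $\theta=\sum_{x\in\Monoid}w_x\,x$, the defining formula~\eqref{equation.transition matrix monoid} shows that, up to transposing the chamber basis, $M$ is exactly the matrix of left multiplication by $\theta$ on $W$; in particular $M$ and this operator have the same characteristic polynomial, so it suffices to triangularize the action of $\theta$ on $W$.

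Next I would bring in the representation theory. Since $\Monoid$ is $\R$-trivial, every simple module is one-dimensional, and the simple modules $S_X$ are indexed by $X\in\LM$, with $x\in\Monoid$ acting on $S_X$ by the scalar $1$ when $\supp(x)\preceq X$ and by $0$ otherwise. That this rule is multiplicative is immediate from $\supp(xy)=\supp(x)\vee\supp(y)$ in Definition~\ref{definition.weakly}(1), since $\supp(xy)\preceq X$ holds iff both $\supp(x)\preceq X$ and $\supp(y)\preceq X$. Hence $\theta$ acts on $S_X$ as the scalar $\sum_{\supp(y)\preceq X}w_y=\lambda_X$. Because the characteristic polynomial of an endomorphism is multiplicative along any composition series, I get $\det(\lambda\mathbbm{1}-M)=\prod_{X\in\LM}(\lambda-\lambda_X)^{m_X}$, where $m_X$ denotes the multiplicity of $S_X$ as a composition factor of $W$. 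This already identifies the eigenvalues as the $\lambda_X$ and reduces the theorem to proving $m_X=d_X$.

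To compute the multiplicities I would count with idempotents. Choose an idempotent $e_X$ with $\supp(e_X)=X$, which exists because $\supp$ is surjective. The functor $V\mapsto e_X V$ is exact, and on $S_Y$ the idempotent $e_X$ acts as the scalar $1$ iff $X=\supp(e_X)\preceq Y$; thus $\dim e_X S_Y=1$ if $X\preceq Y$ and $0$ otherwise, and additivity along a composition series of $W$ gives $\dim e_X W=\sum_{Y\succeq X}m_Y$. On the other hand $e_X W$ is spanned by the chambers $e_X c$, and I claim $\{\,e_X c\mid c\in\mathcal{C}\,\}$ is exactly the set of chambers $\ge_{\R}e_X$: each $e_X c$ is a right zero with $e_X c\ge_{\R}e_X$, while any chamber $d\ge_{\R}e_X$ can be written $d=e_X u$, whence $e_X d=e_X^2u=e_Xu=d$ and so $d=e_X d\in e_X\mathcal{C}$. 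By the definition of $c_X$ this set has cardinality $c_X$, so $\dim e_X W=c_X$. Comparing the two computations yields $\sum_{Y\succeq X}m_Y=c_X$, which is the defining recursion for $d_X$; Möbius inversion on the semi-lattice $\LM$ then gives $m_X=d_X=\sum_{Y\succeq X}\mu(X,Y)\,c_Y$, as claimed.

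The step I expect to be the crux is the representation-theoretic input of the second paragraph, namely that an $\R$-trivial monoid has only one-dimensional simple modules, indexed by $\LM$ with the support character as its action. This is exactly where the weakly ordered structure of Theorem~\ref{theorem.Rtrivial weakly} and Steinberg's results~\cite{steinberg.2006,steinberg.2008} carry the argument, and it is what makes the clean scalar action of $\theta$ possible. By contrast, the surrounding steps---the identification of chambers with right zeros, the exactness of multiplication by $e_X$, and the bijection $e_X\mathcal{C}=\{\,c\in\mathcal{C}\mid c\ge_{\R}e_X\,\}$---are elementary once that classification is available.
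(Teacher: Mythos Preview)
The paper does not actually prove Theorem~\ref{theorem.main}: it simply records that Brown established the left-regular-band case and that the $\R$-trivial statement is a special case of Steinberg's results for monoids in $\mathbf{DA}$~\cite{steinberg.2006,steinberg.2008}. So there is no ``paper's own proof'' to compare against; your proposal is in effect supplying the argument that the paper outsources.

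That said, your sketch is correct and is precisely the representation-theoretic route one finds in Brown and Steinberg: identify chambers with right zeros so that $\mathbb{C}\mathcal{C}$ is a left module, recognize $M$ (up to transpose) as left multiplication by $\theta=\sum_x w_x x$, use that the simple $\mathbb{C}\Monoid$-modules are the one-dimensional characters $\chi_X(x)=[\supp(x)\preceq X]$ indexed by $\LM$, and read off multiplicities via $\dim e_XW=c_X$. Two small points worth tightening. First, surjectivity of $\supp$ alone does not give an \emph{idempotent} with $\supp(e_X)=X$; you should invoke Remark~\ref{remark.lattice}, where $\LM$ is defined as the set of ideals $\Monoid e$ for idempotents $e$ and $\supp(e)=\Monoid e$, so such an $e_X$ exists by construction. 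Second, your multiplicity count needs that the $S_X$ exhaust the simple modules (otherwise an extraneous composition factor $T$ would contribute $n_T\dim e_XT$ to $\dim e_XW$); you correctly flag this as the crux and defer it to Steinberg, which is exactly what the paper does.
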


Brown~\cite[Theorem 4, Page 900]{brown.2000} proved Theorem~\ref{theorem.main} in the case when $\Monoid$ 
is a left regular band. Theorem~\ref{theorem.main} is a generalization to the $\R$-trivial case.
It is in fact a special case of a result of Steinberg~\cite[Theorems 6.3 and 6.4]{steinberg.2006} for monoids
in the pseudovariety $\mathbf{DA}$. This was further generalized in~\cite{steinberg.2008}.

\subsection{Proof of Theorem~\ref{theorem.eigenvalues}}
\label{section.proof}

By Theorem~\ref{theorem.r trivial} the promotion monoid $\Monoid$ is $\R$-trivial, hence
Theorem~\ref{theorem.main} applies.

Let $L$ be the lattice of upper sets of $P$ and $L^{\Monoid}$ the semi-lattice of Definition~\ref{definition.weakly}
associated to $\R$-trivial monoids that is used in Theorem~\ref{theorem.main}. Recall that for the promotion monoid
$L^{\Monoid} = \{ \Rfactor(x) \mid x\in \Monoid^{\prom}, x^2=x \}$ by Remark~\ref{remark.lattice M}.
Now pick $S\in L$ and let $r=r_1\ldots r_m$ be any linear extension of $P|_S$ (denoting $P$ restricted to $S$).
By repeated application of Lemma~\ref{lemma.action of del}, it is not hard to see that $x = \prom_{r_1}
\cdots \prom_{r_m}$ is an idempotent since $r_1 \dots r_m \subseteq \rfactor(x)$ and $x$ only acts on this
right factor and fixes it. $\rfactor(x)$ is strictly bigger than $r_1 \ldots r_m$ if some further letters beyond $r_1 \ldots r_m$
are forced in the right factors of the elements in the image set. This can only happen if there is only one successor 
$S'$ of $S$ in the lattice $L$. In this case the element in $S'\setminus S$ is forced as the letter to the left of
$r_1 \ldots r_m$ and is hence part of $\rfactor(x)$. 

Recall that $f([S,\hat{1}])$ is the number of maximal chains from $S$ to the maximal element $\hat{1}$ in $L$.
Since $L$ is the lattice of upper sets of $P$, this is precisely the number of linear extensions of $P|_{P \setminus S}$.
If $S\in L$ has only one successor $S'$, then $f([S,\hat{1}]) = f([S',\hat{1}])$. Equation~\eqref{equation.derangements}
is equivalent to
\[
	f([S,\hat{1}]) = \sum_{T \succeq S} d_T
\]
(see~\cite[Appendix C]{brown.2000} for more details). Hence $f([S,\hat{1}]) = f([S',\hat{1}])$ implies that $d_S=0$
in the case when $S$ has only one successor $S'$.

Now suppose $S\in L^{\Monoid}$ is an element of the smaller semi-lattice. Recall that $c_S$ of Theorem~\ref{theorem.main}
is the number of maximal elements in $x\in \Monoid^{\prom}$ with $x\ge_\R s$ for some $s$ with $\supp(s)=S$. 
In $\Monoid$ the maximal elements in $\R$-order (or equivalently in $\Monoid^{\prom}$ in $\LL$-order)
form the chamber $\mathcal{C}$ (resp. $\mathcal{C}^{\prom}$) and are naturally indexed by the linear extensions in $\L(P)$. 
Namely, given $\pi = \pi_1 \ldots \pi_n \in \L(P)$ the element $x = \prom_{\pi_1} \cdots \prom_{\pi_n}$ is idempotent, maximal
in $\LL$-order and has as image set $\{\pi\}$. Conversely, given a maximal element $x$ in $\LL$-order it must have
$\rfactor(x)\in \L(P)$. Given $s\in \Monoid^{\prom}$ with $\supp(s)=S$, only those maximal elements $x\in \Monoid^{\prom}$
associated to $\pi\in \im(s)$ are bigger than $s$. Hence for $S\in L^{\Monoid}$ we have $c_S = f([S,\hat{1}])$. 

The above arguments show that instead of $L^{\Monoid}$ one can also work with the lattice $L$ of upper
sets since any $S \in L$ but $S\not \in L^{\Monoid}$ comes with multiplicity $d_S=0$ and otherwise the multiplicities 
agree. 

The promotion Markov chain assigns a weight $x_i$ for a transition from $\pi$ to $\pi'$ for $\pi,\pi'\in\L(P)$ if
$\pi'=\pi\prom_i$. Recall that elements in the chamber $\mathcal{C}^{\prom}$ are naturally associated with
linear extensions. Let $x,x'\in \mathcal{C}^{\prom}$ be associated to $\pi,\pi'$, respectively. That is,
$\pi = \tau x$ and $\pi'=\tau x'$ for all $\tau\in \L(P)$. Then $x'= x \prom_i$ since $\tau (x \prom_i) 
= (\tau x) \prom_i = \pi \prom_i = \pi'$ for all $\tau \in \L(P)$. Equivalently in the monoid $\Monoid$ we would
have $X' = G_i X$ for $X,X'\in \mathcal{C}$. Hence comparing with~\eqref{equation.transition matrix monoid},
setting the probability variables to $w_{G_i}= x_i$ and $w_X=0$ for all other $X\in \Monoid$, 
Theorem~\ref{theorem.main} implies Theorem~\ref{theorem.eigenvalues}.

\begin{eg}
Figure~\ref{figure.lattice} shows the lattice $L^{\Monoid}$ on the left and the lattice $L$ of upper sets of $P$
on the right, for the monoid displayed in Figure~\ref{figure.monoid}. The elements $2,23,12$ in $L$ have only
one successor and hence do not appear in $L^{\Monoid}$.
\end{eg}

\section{Outlook} \label{section.outlook}

Two of our Markov chains, the uniform promotion graph and the uniform
transposition graph, are irreducible and have the uniform distribution
as their stationary distributions. Moreover, the former is
irreversible and has the advantage of having tunable parameters $x_1,
\dots, x_n$ whose only constraint is that they sum to 1.  Because of
the irreversibility property, it is plausible that the mixing times
for this Markov chain is smaller than the ones considered by Bubley
and Dyer~\cite{bubley.dyer.1999}. Hence the uniform promotion graph could have possible
applications for uniformly sampling linear extensions of a large
poset. This is certainly deserving of further study.  

It would also be interesting to extend the results of Brown and
Diaconis~\cite{brown_diaconis.1998} (see also~\cite{athanasiadis_diaconis.2010}) on rates of 
convergences to the Markov chains in this paper.
For the Markov chains corresponding to $\R$-trivial monoids of Section~\ref{section.chains},
one can find polynomial time exponential bounds for the rates of convergences after $\ell$ steps of the form
$c \;\ell^k \lambda^{\ell-k}$, where $c$ is the number of chambers, $\lambda = \max_i (1-x_i)$, and $k$ is 
a parameter associated to the poset. More details on rates of convergences and mixing times
can be found in~\cite{ayyer_klee_schilling.2013}.

In this paper, we have characterized posets, where the Markov chains for
the promotion graph yield certain simple formulas for their
eigenvalues and multiplicities. The eigenvalues have explicit expressions
for rooted forests and there is a concrete combinatorial interpretation for the multiplicities
as derangement numbers of permutations for unions of chains by Theorem~\ref{theorem.poset derangements}.
However, we have not covered all possible posets, whose
promotion graphs have nice properties. For example, the non-zero
eigenvalues of the transition matrix of the promotion graph of the
poset in Example~\ref{example.running example} are given by
\[
x_3+x_4, \quad x_3, \quad 0 \quad \text{and} \quad  -x_1\;,
\]
even though the corresponding monoid is not $\R$-trivial (in fact, it is not even aperiodic).
Note that the last eigenvalue is negative.
On the other hand, not all posets have this property. In particular, the
poset with covering relations $1<2,1<3$ and $1<4$ has six linear
extensions, but the characteristic polynomial of its transition matrix
does not factorize at all. It would be interesting to classify all
posets with the property that all the eigenvalues of the transition
matrices of the promotion Markov chain are linear in the probability
distribution $x_i$. In such cases, one would also like an explicit formula for
the multiplicity of these eigenvalues. 
In this paper, this was only achieved for unions of chains. Further details
are discussed in~\cite{ayyer_klee_schilling.2013}.

\appendix
\section{\texttt{Sage} and \texttt{Maple} implementations}
\label{section.appendix}

We have implemented the extended promotion and transposition operators
on linear extensions in \texttt{Maple} and also the open source
software \texttt{Sage}~\cite{sage, sage-combinat}.  The \texttt{Maple}
code is available from the homepage of one of the authors (A.A.) as
well as the preprint version on the arXiv, whereas the \texttt{Sage}
code was already integrated into \texttt{sage-5.0} (by A.S.).
Some of the figures in this paper were produced in \texttt{Sage}.

Here we illustrate how to reproduce Example~\ref{example.promotion slide} in \texttt{Sage}. 
We define the poset, view it, and create its linear extensions:
\begin{verbatim}
  sage: P = Poset(([1,2,3,4,5,6,7,8,9],
    [[1,3],[1,4],[2,3],[3,6],[3,7],[4,5],[4,8],[6,9],[7,9]]), 
    linear_extension = True)
  sage: P.show()
  sage: L = P.linear_extensions()
\end{verbatim}
Then we define the identity linear extension and compute the promotion on it:
\begin{verbatim}
  sage: pi = L([1,2,3,4,5,6,7,8,9])
  sage: pi.promotion()
  [2, 1, 4, 5, 3, 7, 8, 6, 9]
\end{verbatim}

Next we reproduce the examples of Section~\ref{section.markov chains}.
The poset and linear extensions of Example~\ref{example.running example} can be constructed
as follows:
\begin{verbatim}
  sage: P = Poset(([1,2,3,4],[[1,3],[1,4],[2,3]]))
  sage: L = P.linear_extensions()
  sage: L.list()
  [[2, 1, 3, 4], [2, 1, 4, 3], [1, 2, 3, 4], [1, 2, 4, 3], 
   [1, 4, 2, 3]]
\end{verbatim}
To compute the generalized promotion operator on this poset, using the algorithm defined in 
Section~\ref{subsection.def prom}, we first need to make sure that the poset $P$ is associated with 
the identity linear extension:
\begin{verbatim}
  sage: P = P.with_linear_extension([1,2,3,4])
\end{verbatim}
Alternatively, this is achieved via
\begin{verbatim}
  sage: P = Poset(([1,2,3,4],[[1,3],[1,4],[2,3]]), 
                    linear_extension = True)
  sage: Q = P.promotion(i=2)
  sage: Q.show()
\end{verbatim}
The various graphs of Sections~\ref{subsection.tau uniform}--\ref{subsection.promotion} can be created 
and viewed, respectively, as follows:
\begin{verbatim}
  sage: G = L.markov_chain_digraph(action='tau')
  sage: G = L.markov_chain_digraph(action='tau', 
               labeling='source')
  sage: G = L.markov_chain_digraph(action='promotion')
  sage: G = L.markov_chain_digraph(action='promotion', 
               labeling='source')
  sage: view(G)
\end{verbatim}
The transition matrices can be computed via
\begin{verbatim}
  sage: L.markov_chain_transition_matrix(action='tau')
\end{verbatim}
with again other settings for ``action" or ``labeling", depending on the desired graph.

\bibliographystyle{alpha}
\bibliography{posets}

\end{document}